\numberwithin{theorem}{section}
\newcounter{St}
\newtheorem{step}[St]{Step}
\newcommand{\TheTitle}{Analysis of nonsmooth stochastic approximation: the differential inclusion approach}
\newcommand{\TheTitleShort}{Analysis of nonsmooth stochastic approximation: the D.I. approach}
\newcommand{\TheAuthors}{S.~Majewski, B.~Miasojedow, E.~Moulines}
\headers{\TheTitleShort}{\TheAuthors}
\title{{\TheTitle}
\thanks
{
{This work was funded by the project BayesScale - Pierre Laffitte -, the Data Science Initiative of Ecole Polytechnique and by Polish National Science Center grant no. 2015/17/D/ST1/01198.}}
}
\author{
  Szymon Majewski\thanks{Institute of Mathematics, Polish Academy of Sciences
    (\email{smajewski@impan.pl})}.
  \and
  Blazej Miasojedow\thanks{Institute of Applied Mathematics and Mechanics, University of Warsaw, Institute of Mathematics, Polish Academy of Sciences, (\email{bmia@mimuw.edu.pl})}
  \and
  Eric Moulines \thanks{Ecole Polytechnique, Paris, National Research University Higher School of Economics (HSE), Moscow,
    (\email{eric.moulines@polytechnique.edu}).}
}
\newcommandx\sequence[3][2=,3=]
\newcommandx\sequencePar[3][2=,3=]
\newcommandx\subsequence[4][2=,3=,4=]
\def\rset{\ensuremath{\mathbb{R}}}
\def\nset{\ensuremath{\mathbb{N}}}
\newcommand{\eqsp}{\;}
\newcommandx\vectornorm[3][1=,3=]{\ifthenelse{\equal{#1}{}}{\left\| #2 \right\|}{\left\| #2 \right\|_{#3}^{#1}}}  
\newcommandx\supnorm[2][1=]{\vectornorm{#2}^{#1}_\infty}
\newcommandx\lnorm[3][1=]{\left\lVert #2 \right\rVert^{#1}_{#3}}
\newcounter{hypA}
\newenvironment{hypA}{\begin{sf}\refstepcounter{hypA}\begin{itemize}
  \item[({\bf A\arabic{hypA}})]}{\end{itemize}\end{sf}}
\newcounter{hypB}
\newenvironment{hypB}{\begin{sf}\refstepcounter{hypB}\begin{itemize}
  \item[({\bf B\arabic{hypB}})]}{\end{itemize}\end{sf}}
\newcounter{hypP}
\newenvironment{hypP}{\begin{sf}\refstepcounter{hypP}\begin{itemize}
  \item[({\bf P\arabic{hypP}})]}{\end{itemize}\end{sf}}
\newcounter{hypPp}
\newenvironment{hypPp}{\begin{sf}\refstepcounter{hypPp}\begin{itemize}
  \item[({\bf {\^{P}}\arabic{hypPp}})]}{\end{itemize}\end{sf}}
\newcommand{\Vnorm}[1]{\left\Vert #1 \right\Vert}
\def\ie{i.e.}
\def\wrt{with respect to}
\newcommand{\1}{\ensuremath{\mathbbm{1}}}
\newcommandx{\indi}[2][1=]{\1^{#1}_{#2}}
\def\Xset{\mathsf{X}}
\def\Zset{\mathsf{Z}}
\def\Zsigma{\mathcal{Z}}
\newcommand{\ensemble}[2]{\left\{#1\,:\eqsp #2\right\}}
\newcommand{\set}[2]{\ensemble{#1}{#2}}
\newcommand{\pscal}[2]{{\langle #1 , #2 \rangle}}
  \newcommand{\coint}[1]{\left[#1\right)}
  \newcommand{\ocint}[1]{\left(#1\right]}
  \newcommand{\ooint}[1]{\left(#1\right)}
  \newcommand{\ccint}[1]{\left[#1\right]}
\newcommand{\closure}[1]{\overline{#1}}
\newcommand{\eqdef}{:=}
\def\rmd{\mathrm{d}}
\def\dist{\operatorname{d}}
\def\ball{\operatorname{B}}
\def\cvxind{\mathbb{I}}
\DeclareMathOperator*{\argmin}{arg\,min}
\DeclareMathOperator{\prox}{prox}
\newcommand{\lip}[1]{\Vert #1 \Vert_{\operatorname{Lip}}}
\newcommand{\loclip}[2]{\Vert #1 \Vert_{\operatorname{Lip},#2}}
\newcommand{\tangentcone}[1]{\operatorname{T}_{#1}}
\newcommand{\normalcone}[1]{\operatorname{N}_{#1}}
\newcommand{\PE}{\mathbb{E}}
\newcommand{\PP}{\mathbb{P}}
\def\iid{i.i.d.}
\newcommand{\sign}[1]{\operatorname{sign}(#1)}
\newcommand{\pospart}[1]{(#1)_+}
\newcommand{\Xb}[2]{z_{#1}^{(#2)}}
\def\mcf{\mathcal{F}}
\newcommand{\CPE}[3][]
{\ifthenelse{\equal{#1}{}}{{\mathbb E}\left[\left. #2 \, \right| #3 \right]}{{\mathbb E}_{#1}\left[\left. #2 \, \right | #3 \right]}}
\def\proxSGD{ProxSGD}
\newcommand{\normW}[2]{\left|#1\right|_{#2}}
\newcommand{\normWm}[2]{\left\|#1\right\|_{#2}}
\newcommand{\newproof}[1]{
  \theoremstyle{nonumberplain}
\theoremheaderfont{\normalfont\itshape}
\theorembodyfont{\normalfont}
\theoremseparator{.}
\theoremsymbol{\proofbox}
\newtheorem{#1}{Proof}
}
\newproof{proof1}
\begin{document}

\maketitle

\begin{abstract}
In this paper we address the convergence of stochastic approximation when the functions to be minimized are not convex and nonsmooth. We show that the "mean-limit" approach to the convergence which leads, for smooth problems, to the ODE approach can be adapted to the non-smooth case. The limiting dynamical system may be shown to be, under appropriate assumption, a differential inclusion. Our results expand  earlier works in this direction by \cite{benaim2005stochastic} and provide a general framework for proving  convergence for unconstrained and constrained stochastic approximation problems, with either explicit or implicit updates. In particular, our results allow us to establish the convergence of stochastic subgradient and proximal stochastic gradient descent algorithms arising in a large class of deep learning and high-dimensional statistical inference with sparsity inducing penalties.
\end{abstract}

\begin{keywords}
Stochastic Approximation, Subgradient algorithm, Stochastic Proximal Gradient, Proximal operator, Differential Inclusions
\end{keywords}

\section{Introduction}
Stochastic approximation algorithms are stochastic processes defined iteratively as
\begin{equation}
\label{eq:general-SAA}
x_k = x_{k-1} + \gamma_k Y_k
\end{equation}
where $x_k$ takes value in $\rset^d$, $\sequence{Y}[k][\nset]$ is a sequence of random variable and $\sequence{\gamma}[k][\nset^*]$ is a sequence of stepsizes satisfying $\gamma_k > 0$, $\lim_{k \to \infty} \gamma_k= 0$ and $\sum_{k=1}^\infty \gamma_k = \infty$. The decreasing stepsizes imply that the rate of change of the parameter decreases as $k$ goes to infinity, thus providing an implicit averaging.
The value $x_k$ might represent the current fit of a parameter or the state of a system and $Y_{k}= \Phi(x_{k-1},\xi_k)$ is a (measurable) function of the past fit $x_{k-1}$ of the parameter and a new information observed at time $k$. Such problems have been considered in the early work by \cite{robbins1951stochastic} and since found numerous applications, especially in machine learning and computational statistics.

A powerful method to analyze stochastic gradient algorithm, introduced in the early works by \cite{ljung:1977} and \cite{kushner1978stochastic}  is the ordinary differential equation (ODE) method.
 The ODE  method has led to an enormous literature; see for example \cite{benaim1999dynamics}, \cite{kushner2012stochastic} and the references therein. The ODE method can be informally summarized as follows: first we rewrite
 \begin{equation}
 \label{eq:definition-Y-eta}
 Y_k= F(x_{k-1}) + \eta_k
 \end{equation}
 where $F: \rset^d \to \rset^d$ is a locally-Lipschitz vector field defined by an appropriate averaging and $\eta_k= Y_k - F(x_{k-1})$. If $\sequence{\xi}[k][\nset]$ is an \iid\ sequence and $\PE[\vectornorm{\Phi(x,\xi)}] < \infty$ for all $x \in \rset^d$, one may take for example $F(x)= \PE[\Phi(x,\xi_1)]$ and $\eta_k= Y_k - F(x_{k-1})$ which then is a martingale increment sequence. The situation becomes more complex when the noise $\sequence{\xi}[k][\nset]$ is no longer \iid. Of particular importance is the case where the conditional  distribution of $\xi_k$ given the past $\set{(x_j,\xi_j)}{j \in \{0,\dots, k-1\}}$ is Markovian (see for example \cite{atchade2017Perturbed}).
 The ODE approach to analyze the asymptotic behavior of the sequence $\sequence{x}[k][\nset]$ is to consider them as approximated solutions of the ODE $\dot{x}= F(x)$.

In this paper, we are primarily interested by the application of stochastic approximation algorithms to minimize a function $f: \rset^d \to \rset$. If the function $f$ is differentiable and the gradient $\nabla f$ is known, a classical method to minimize $f$ consists  in performing a gradient descent. If only a noise corrupted $H_k$ version of the gradient $\nabla f(x_{k-1})$ is available, a popular algorithm is the stochastic gradient descent (SGD) algorithm whose iterations are given by
$x_k = x_{k-1} - \gamma_k H_k$.
In this context $F(x)= - \nabla f$ and the associated ODE is $\dot{x} = -\nabla f(x)$.

If the function $f$ is not differentiable, it is no longer possible to use the SGD algorithm.
However, if the function $f$ is locally Lipschitz, the Clarke generalized gradient $\bar\partial f(x)$ can still be defined
(see \Cref{def:clarke-gradient} and \cite[Section~1.2]{clarke1990optimization}). The Clarke generalized gradient $\bar\partial f(x)$  is a point-to-set map: for any $x \in \rset^d$, $\bar\partial f(x)$ is a
nonempty convex compact subset of $\rset^d$. When $f$ is continuously differentiable at $x$, $\bar \partial f(x)$ reduces to the singleton $\{\nabla f(x)\}$.
When $f$ is convex, then $\bar\partial f(x)$ coincides with the subdifferential of convex analysis.
As above, if the Clarke gradient cannot be computed but a noise corrupted version of a selection of $\bar\partial f(x_{k-1})$ is available, we may consider a generalization of the stochastic subgradient algorithm
\[
x_k = x_{k-1} - \gamma_k \{v_{k-1} + \eta_k\} \eqsp, \quad \text{where $v_{k-1} \in \bar\partial f(X_{k-1})$}
\]
which we sometimes denote more concisely
\begin{equation*}
\label{eq:recursion-DI}
x_k \in x_{k-1} - \gamma_k \{ \bar\partial f(x_{k-1}) + \eta_k \} \eqsp.
\end{equation*}
The classical stochastic approximation algorithm update rule is replaced by a stochastic recursive inclusion:
\begin{equation*}
\label{eq:definition-general-SDI}
x_k \in x_{k-1} + \gamma_k \{ F(x_{k-1}) + \eta_k \}
\end{equation*}
where $F$ is a point-to-set map and $\eta_k$ is defined in \eqref{eq:recursion-DI}.
Such algorithms play an important role in game theory, as illustrated in
\cite{Benaim2006Stochastic,Benaim2012Perturbations} where numerous examples of stochastic recursive inclusions are introduced.

\cite{benaim2005stochastic} have shown that the "mean-limit" approach  leading to the ODE method in the smooth case can be extended to the analysis of stochastic recursive inclusion. In this case, the limit ODE is replaced by a solution of the differential inclusion
\begin{equation*}
\dot{x} \in F(x) \eqsp,
\end{equation*}
\ie\ an absolutely continuous mapping $x: \rset \to \rset^d$ such that $\dot{x}(t) \in F(x(t))$ for almost every $t \in \rset$. Such differential inclusions play a key role in the analysis of nonsmooth dynamical systems; see for example \cite[Chapter~4]{clarke1990optimization}
or \cite[Section~2.1]{aubin2012differential} and the references therein.\footnote{Just before submitting this paper, \cite{Davis2018Stochastic} published an analysis of stochastic subgradient algorithms. We have developed our results completely independently; our work is based on results obtained earlier in \cite{benaim2005stochastic} and even if some statements are similar, the proofs in this paper are all original.}

In this paper, we will also consider proximal algorithms, which have become an important tool in  nonsmooth optimization problems; the literature in this field is also huge, see for example  \cite{Attouch2007,beck2009fast,nemirovski2009robust,parikh2014proximal,Bolte2014Proximal}).
Proximal algorithms  with stochastic updates  have been proposed and studied in recent years.
One such algorithm is Proximal Stochastic Gradient Descent (proxSGD), that optimizes composite convex function $P = f + g$ where $f$ is a continuously differentiable function with Lipschitz-gradients and $g$ is a "proximable" function (e.g.\ $g$ is lower semi-continuous and convex, but this notion can be extended to nonconvex functions). The proxSGD algorithm  alternates between stochastic gradient update for $f$ and deterministic proximal step for $g$. This above optimization problem plays a fundamental role in many machine learning problems, ranging from convex optimization such as convex regression problem with sparsity inducing penalties like LASSO to highly nonconvex problem such as optimizing the weights of deep neural networks.
Numerous papers have been devoted to the case when $f$ and $g$ are both convex, $f$ gradient Lipschitz and $g$ lower semi-continuous; see for example
\cite{rosasco2014convergence,nitanda2014stochastic,Xiao2014Proximal,combettes2016Stochastic}. Atchade et al.~\cite{atchade2017Perturbed} have extended these results in the  Markovian noise case.
In recent years, triggered by the surge deep learning, the nonconvex case has started to attract many research efforts, at least in the smooth case ($f$ gradient Lipschitz and $g \equiv 0$); see for example
\cite{ghadimi2013Stochastic,allen2016variance} and the references therein.
For the nonsmooth and nonconvex case, the results are still partial. Ghadimi et al. \cite{ghadimi2016Minibatch} considered the case where $f$ is
differentiable but possibly nonconvex  and $g$ is non-differentiable but convex. They have analyzed the deterministic proximal gradient algorithm (where the full gradient is
computed at each iteration). They have also extended their results to the stochastic case; Reddi et al. \cite{reddi2016stochastic}
provides rates of convergence.

We consider in this paper nonconvex and nonsmooth minimization problems. We establish  the convergence of stochastic inclusion equation generalizing \eqref{eq:definition-general-SDI} by allowing implicit steps and projections on a closed compact convex set at each iteration. Our results generalize \cite{benaim2005stochastic}.  We also discuss the stability of the limit differential inclusion by means of locally Lipschitz continuous and regular Lyapunov functions (see \Cref{def:regular-function}). We in particular establish a characterization of the possible limit point of the stochastic approximation algorithm as the set of zeros of an upper-bound of the set-valued Lie derivative (see \Cref{def:Lie-derivative}) of the
Lyapunov function. We then apply our results to the analysis of the proximal stochastic gradient descent for the composite minimization problem $P= f+g$, under assumptions  on the noise sequence analogous to those commonly used for the SGD in the smooth nonconvex case.   We also show  that $V= f + g$ can play the role of a Lyapunov function. We finally analyse a projected version of stochastic subgradient algorithm.

The paper is organized as follows. In \Cref{sec:assumptions-notations}, we introduce our main assumptions and notations and introduce the proximal stochastic gradient and projected subgradient algorithms.
In \Cref{sec:convergencePAD}, we state and prove  our main convergence results under the assumption that the iterates are stable.
In  \Cref{sec:convergencePPAD}, we extend these convergence results to the case where the updates are projected on a compact convex set.
In \Cref{sec:applications}, we consider  applications of our main results to {\proxSGD}  and projected stochastic subgradient. Finally in \Cref{ap:prelim} we present postponed proofs.

\section{Assumptions and Notations}
\label{sec:assumptions-notations}

In this section we introduce definitions and notations.

\begin{definition}[Perturbed approximate discretization (PAD) and projected perturbed approximate discretization (PPAD) ]\label{def_pad} Let $\Xset$ be an open subset of $\rset^d$ and $F$ be a set-valued function mapping each point $x \in \Xset$ to a set $F(x) \subset \Xset$. We say that the sequence $\sequence{x}[k][\nset] \subset \Xset$ is a Perturbed Approximate Discretization
 with noise  $\sequence{\eta}[k][\nset]$ and step sizes $\sequence{\gamma}[k][\nset]$ if an only if there exists  $\sequence{y}[k][\nset]$ such that
 \begin{equation}
 \label{def:pad}
 x_k\in x_{k-1}+\gamma_k\left\{F(y_{k})+\eta_k\right\} \eqsp.
\end{equation}
Let $K$ a compact convex set.
We say that the sequence $\sequence{x}[k][\nset]$ is a $K$-Projected Perturbed Approximate Discretization ($K$-PPAD)
 with  noise $\sequence{\eta}[k][\nset]$ and  step sizes $\sequence{\gamma}[k][\nset]$  if and only if there exists $\sequence{y}[k][\nset]$ such that
 \begin{equation}
  \label{eq:ppad}
  x_k \in \Pi_K \left(x_{k-1} + \gamma_k \{F(y_k) + \eta_k\} \right) \eqsp.
 \end{equation}
 \end{definition}
By convention for a convex closed set $K$ and a given set $A$, by $\Pi_K(A)$ we denote the  projection of $A$ onto $K$, defined as $\Pi_K(A)=\{\Pi_K(a)\eqsp, a\in A\} $.

\begin{remark}
 The condition $ x_k \in \Pi_K \left(x_{k-1} + \gamma_k (F(y_k) + \eta_k) \right)$ is satisfied if and only if  there
 exists  $v_k\in F(y_k)$ such that
\begin{equation}\label{def:proj_alternative}
\langle x_k - z , x_k - x_{k-1} - \gamma_k(v_k + \eta_k) \rangle \leq 0
\end{equation}
for all $z \in K$.
\end{remark}

\begin{example}[Bena\"im \emph{et al.} discretization]
\cite[Definition~III]{benaim2005stochastic} deal with sequence satisfying a recursion of the form
\[
x_{k} \in  x_{k-1} + \gamma_k \{ F(x_{k-1}) + \eta_k \} \eqsp.
\]
Such a sequence clearly is a PAD with noise $\sequence{\eta}[k][\nset]$, step sizes $\sequence{\gamma}[k][\nset]$. In such case $y_k= x_{k-1}$.
\end{example}

We now show that the PAD and $K$-PPAD formalism cover the proximal stochastic gradient descent (\proxSGD) and the stochastic (sub)gradient algorithms for nonsmooth and nonconvex minimization problems. First we introduce some additional definitions and notations.

\begin{definition}[Generalized directional derivative, after \protect{\cite[Chapter~2, Section~1]{clarke1990optimization}}] \label{def:gen-dir-deriv}
Let $f: \rset^d \rightarrow \rset$ be a locally Lipschitz function at $x_0$. 
The generalized directional derivative of $f$ at $x_0$ in the direction $h \in \rset^d$ is:
\[
f^0(x_0, h) = \lim_{\delta \downarrow 0^+} \sup_{\substack{\vectornorm{\tilde{h}} \leq \delta \\ 0 < \lambda < \delta}} \frac{f(x_0 + \tilde{h} + \lambda h) - f(x_0 + \tilde{h})}{\lambda}\eqsp.
\]
\end{definition}
\begin{remark} Contrary to the standard directional derivative, the generalized directional derivative $f^0(x_0, h)$  is always well defined  for  any interior point $x_0$ of  the domain
$\mathrm{Dom}(f)= \set{x \in \rset^{n}}{|f(x)| < \infty}$.
\end{remark}
It is shown in \cite[Proposition 2.1.1]{clarke1990optimization} that the generalized directional derivative at $x_0$, $h \mapsto f(x_0,h)$ is
positively homogeneous and subadditive and $|f^0(x_0, h)| \leq L \Vnorm{h}$, where $L$ is a Lipschitz constant on some neighborhood of $x_0$. Denote by $\pscal{\cdot}{\cdot}$ the scalar product on $\rset^d$.
\begin{definition}[Clarke generalized gradient] \label{def:clarke-gradient}
Let $f: \rset^d \rightarrow \rset$ be a function and $x_0$ a point in the interior of $\mathrm{Dom}(f)$.
If $f$ is locally Lipschitz function at $x_0$,
the Clarke generalized gradient of $f$ at $x_0$ is the set defined by:
\[
\overline{\partial}f(x_0) = \set{\zeta \in \rset^d}{f^0(x,h) \geq \pscal{h}{\zeta}, \, \quad \text{for all $h \in \rset^d$}} \eqsp.
\]
\end{definition}
Similarly to subgradient, the Clarke generalized gradient is a set-valued generalization of the gradient. In particular,
when function $f$ is continuously differentiable at some point $x_0$,
then we have $\overline{\partial}f(x_0) = \{ \nabla f (x_0) \}$. Furthermore, if the function $f$ is convex and locally Lipschitz and $x_0$ belongs to the interior of its domain,
then the  Clarke generalized gradient of $f$ coincides with the subgradient.

We say that the set-valued map $F: \Xset \to \rset^d$   is convex-compact
if for any point $x \in \Xset$ the set $F(x)$ is convex and compact. We say that the set-valued function $F$
is locally bounded if for any compact set $K \subset \Xset$, $\bigcup_{x \in K} F(x)$ is bounded.
We also define upper hemicontinuity.
\begin{definition}[Upper Hemicontinuity]\label{def:upper-hemicontinuity}
Let $\Xset$ be an open subset of $\rset^d$. A set-valued map $F\colon \Xset \rightarrow \mathcal{P}(\rset^d)$ is said to be upper
hemicontinuous at $x \in \Xset$, if and only if  $F(x)$ is nonempty, and for every open neighborhood  $U$ of $F(x)$, there exist an open neighbourhood $V$ of $x$, such that:
\[
 \left( \bigcup_{z \in V} F(z) \right) \subseteq U\eqsp.
\]
\end{definition}
It is shown in \cite[Propositions~2.1.2]{clarke1990optimization} that if $f$ is Lipschitz on a neighborhood of $x_0$ with Lipschitz constant $\loclip{f}{x_0}$  then  $\overline{\partial}f(x_0)$ is a non-empty set, compact, convex, and for any $u \in \bar\partial f(x_0)$, $\vectornorm{u} \leq \loclip{f}{x_0}$.
By \cite[Proposition~2.1.5]{clarke1990optimization}, $\bar\partial f$ is upper hemicontinuous at $x_0$. For any compact set $K \subset \Xset$, $\sup_{x \in K} \loclip{f}{x} < \infty$ showing that $\bar\partial f$ is  locally bounded.
For proofs of those results and additional properties of Clarke generalized gradient, we refer the reader to \cite{clarke1990optimization}.

\begin{definition}[Regular function] \label{def:regular-function}
Let $f$ be a function and $x_0 \in \mathrm{Dom}(f)$. Assume that $f$ is locally Lipschitz at $x_0$.
We say that $f$ is regular at $x_0$, if and only if  for any $h \in\rset^d$
\[
f^0(x_0,h)=\lim_{\lambda\downarrow 0^+}\frac{f(x_0+\lambda h)-f(x_0)}{\lambda}\eqsp,
\]
where $f^0$ is the generalized directional derivatives (see \Cref{def:gen-dir-deriv}).
\end{definition}
In  words, $f$ is regular at $x_0$ if the directional derivatives exist for all directions $d\in\rset^d$ and coincide with the generalized directional derivatives.
\begin{remark}
It may happen that the usual directional derivative exists,
but does not coincide with the generalized directional derivative.
The classical example is $f(x) = - |x|$. As shown in \cite[Proposition~2.3.6]{clarke1990optimization}
every convex locally Lipschitz function is regular.
The same property obviously holds for any continuously differentiable function.
Note finally that if the functions $f_1,\dots,f_p$ are regular at $x_0$, then for any nonnegative weights $\alpha_1,\dots,\alpha_p$,
$\sum_{i=1}^p \alpha_i f_i$ is also regular at $x_0$, see \cite[Proposition~2.3.6]{clarke1990optimization}.
\end{remark}
Now we are ready to discuss the proximal gradient descent and projected subgradient algorithms.
\begin{example}[Perturbed proximal gradient descent algorithm]
\label{exam:proximal}
Consider the problem of minimizing a composite function $P=f+g$ define on an open subset of $\Xset \subseteq\rset^d$
where $f$ is continuously differentiable and $g$ is locally Lipschitz, bounded from below and regular (see \Cref{def:regular-function}).
The proximal gradient algorithm is defined by the following recursion:
\[
 x_{k} \in \prox_{\gamma_{k}, g}(x_{k-1}-\gamma_{k}\nabla{f}(x_{k-1}))\eqsp, \quad k \geq 1 \eqsp,
\]
where $\sequence{\gamma}[k][\nset^*]$ is a sequence of stepsizes and $\prox_{\gamma,g}$ stands
for the proximal operator defined by
\begin{equation}\label{eq:proximal}
\prox_{\gamma,g}(x) \in \argmin_{y\in\Xset}\left\{g(y)+ (2\gamma)^{-1} \vectornorm{y-x}^2 \right\}\eqsp.
 \end{equation}
In our settings the function $g$ is lower bounded and under this condition the set appearing in the right-hand side of \eqref{eq:proximal} is nonempty. Necessary and sufficient conditions for this algorithm to be well-defined can be
found in  \cite[Excersise 1.24]{Rockafellar2009}.
In the perturbed version of the proximal gradient algorithm, for any $k\in\nset^*$ we replace the gradient
$\nabla f(x_{k-1})$ by  a noise corrupted version $\nabla f(x_{k-1})+\zeta_k$  which leads to the recursion
\[
 x_{k} \in \prox_{\gamma_{k} g}(x_{k-1}-\gamma_{k} \{ \nabla f(x_{k-1})+\zeta_k \})\eqsp.
\]
The characterization of the minimum by the Clarke generalized gradient (see \cite[Proposition 2.3.2]{clarke1990optimization}) yields
\[
 0 \in \gamma_{k}^{-1} (x_{k}-x_{k-1})+ \nabla f(x_{k-1}) + \zeta_k +\overline\partial g(x_{k})\eqsp.
\]
Setting, for any $k\in\nset^*$, $\eta_k= -\zeta_k + \nabla f(x_{k})-\nabla f(x_{k-1})$ we get that for any $k\in\nset^*$
\[
 x_{k}\in x_{k-1}+ \gamma_k\left[-\nabla f(x_{k})-\overline\partial g(x_{k}) +\eta_k\right]\eqsp.
\]
Therefore perturbed proximal gradient is a PAD in the sense of \Cref{def_pad} with $F=-\nabla f -\overline\partial g$, $y_k=x_k$,  noise sequence $\sequence{\eta}[k][\nset^*]$, 
and step sizes $\sequence{\gamma}[k][\nset^*]$.
\end{example}

\begin{example}[Projected stochastic (sub)gradient] We consider projected subgradient algorithm framework introduced in the convex case by \cite{nemirovski2009robust} to solve the constrained minimization problem $\argmin_{x\in K } f(x)$.
Let $K$ be a bounded closed convex set and $f\colon K\to\rset$ be a locally Lipshitz function. 
Assume that for each iteration an oracle  returns a perturbed element of the subgradient, \ie\   $H_k \eqdef \nu_k +\zeta_k$ with $\nu_k \in\partial f(x_{k-1})$ and with $\zeta_k$ a perturbation.

The projected stochastic subgradient algorithm generates iteratively the sequence
$\sequence{x}[k][\nset^*]$  as follows
\[
 x_{k}=\Pi_K(x_{k-1}-\gamma_{k} H_{k})\eqsp,
\]
where $\sequence{\gamma}[k][\nset^*]$ is a sequence of stepsizes and $\Pi_K$ is the projection on the set $K$. The projected subgradient algorithm is a $K$-PPAD with field
$F=-\partial f$, $y_{k}=x_{k-1}$ and noise $\eta_k= - \zeta_k$. 
\end{example}

We will analyse the convergence of PAD (see~\eqref{def:pad}) under the following assumptions:
\begin{hypA}
\label{ass:open}
$\Xset$ is an open subset of $\rset^d$ and $F : \Xset \rightarrow \mathcal{P}(\rset^d)$ is a set-valued map, that is upper hemicontinuous,
cf.~\Cref{def:upper-hemicontinuity}, convex-compact valued  and locally bounded.
\end{hypA}
\begin{hypA}
\label{ass:stepsize}
The sequence of step sizes $\sequence{\gamma}[k][\nset^*]$ satisfies $\gamma_k > 0$, $\sum_{k=0}^{\infty} \gamma_k = \infty$, and $\lim_{k \rightarrow \infty} \gamma_k = 0$.

\end{hypA}

\begin{hypA}
\label{ass:noise}
The perturbation sequence $\sequence{\eta}[k][\nset^*]$ can be decomposed as  $\eta_k = e_k + r_k$,
where $\sequence{e}[k][\nset^*]$ and $\sequence{r}[k][\nset^*]$ are two sequences satisfying $\lim_{k \rightarrow \infty}\vectornorm{r_k} = 0$, and $\sum_{k=1}^{\infty} \gamma_k e_k$ converges.
\end{hypA}
\begin{hypA}
 \label{ass: limit_y_x} The approximation sequence $\sequence{y}[k][\nset^*]$ belongs to $\Xset$ and satisfies
 \[
  \lim_{k\to\infty}\vectornorm{x_k-y_k}=0.
 \]

\end{hypA}

Condition (A\ref{ass:open}) is a rather mild regularity condition. Upper hemicontinuity replaces the continuity of the vector field which plays a key role in the classical theory of stochastic approximation \cite{kushner2012stochastic}.
The requirement for $F$ to be convex-compact valued and locally bounded might be less obvious, but this  assumption is commonly used in nonsmooth analysis. This is not a serious limitation for the minimization problems we have primarily in mind.

The assumptions  (A\ref{ass:stepsize}, A\ref{ass:noise}) are usual in stochastic approximation literature \cite{andrieu:moulines:2006,fort:moulines:priouret:2012}.
It is worth noting, that condition (A\ref{ass:noise}) allows perturbations sequences which have random and deterministic components
and hence our results can be used for proving almost sure convergence for \proxSGD\ for which the proximal operator
is computed numerically and is therefore inexact (although in our framework the deterministic noise should vanish asymptotically faster than step size).
The assumptions (A\ref{ass: limit_y_x}) allows to cover both explicit and implicit discretization of differential inclusions as illustrated in \Cref{exam:proximal}.

As in classical ODE method for stochastic approximation,  establishing convergence results first requires to show that
algorithm is stable in the sense that the sequence $\sequence{x}[k][\nset^*]$ remains in some compact set.
This issue is non-trivial even in the noiseless case and might be challenging to establish in the stochastic case.
One of possible solution to overcome this difficulty is to introduce a projection on convex compact set $K$. This case is considered in \Cref{sec:convergencePPAD}, however first
 in \Cref{sec:convergencePAD} we consider the standard version of stochastic approximation, where we establish the convergence  of PAD assuming that the sequence
 $\sequence{x}[k][\nset^*]$ remains in some compact set.

\section{Convergence of Perturbed Approximate Discretisation}
\label{sec:convergencePAD}
In this section, we state our main convergence results for PAD.
First in \Cref{thm:limit_compact} we show that a translated and interpolated version of the PAD
 converges to a solution of a differential inclusion. Further in \Cref{thm:convergence} we combine these
results with Lyapunov stability conditions to obtain convergence of the iterates to the set of stationary points of the differential inclusion.
\begin{definition}[{Solution of Differential Inclusion}] \label{def:diff-inclusion-solution}
Let $\Xset \subseteq \rset^d$ be an open subset, $F : \Xset\rightarrow \mathcal{P}(\rset^d)$ be a  set-valued map, and $I \subseteq \rset$ be an interval. A function $x : I \rightarrow \Xset$ is a solution of  the differential inclusion $\dot{x} \in F(x)$ if it is absolutely continuous and for almost every $t \in I$, $\dot x(t) \in F(x(t))$.
\end{definition}
Let us define the piecewise linear interpolation $X_0 : \rset \to \rset^d$ of the sequence $\sequence{x}[k][\nset]$  with positive stepsize  $\sequence{\gamma}[k][\nset]$:
\begin{equation}\label{eq:def_X}
X_0(t) =
\begin{cases}
x_0 & \text{if} \quad t \leq 0 \\
x_k \frac{(t - t_{k-1})}{\gamma_k} + x_{k-1} \frac{(t_{k} - t)}{\gamma_k} & \text{if} \quad t \in [t_{k-1},t_k] \eqsp, k \geq 1
\end{cases}
\end{equation}
where $t_0= 0$ and for $k \geq 1$,
\begin{equation}\label{eq:def_t}
t_k = t_{k-1} + \gamma_k= \sum_{i=1}^k \gamma_i \eqsp.
\end{equation}
Let $\sequence{s}[k][\nset^*]$ be an increasing sequence of positive real numbers, such that $\lim_{k \to \infty} s_k= \infty$. Let us define shifted linear interpolation of $\sequence{x}[k][\nset]$ by
\begin{equation}\label{eq:shift}
X_k(t) = X_0(t + s_k) \eqsp,\  t \geq 0 \eqsp.
\end{equation}
Consider the PAD sequence $\sequence{x}[k][\nset]$ defined in \eqref{def:pad}: for $k \in\nset^*$, 
\begin{equation}
\label{eq:PAD-1}
x_k = x_{k-1} + \gamma_k( v_k + e_k+r_k) \eqsp,   
\end{equation}
where $\sequence{e}[k][\nset]$ and $\sequence{r}[k][\nset]$ are defined in A\ref{ass:noise} and  $v_k \in F(y_k)$.
Let us define piecewise constant functions  $\hat v,\hat r,\hat e$ on $\coint{0,\infty}$ as follows
\begin{align*}
\hat v(t)=\sum_{k=1}^\infty v_k \indi{\coint{t_{k-1},t_k}}(t) \;,\hat r(t)=\sum_{k=1}^\infty r_k \indi{\coint{t_{k-1},t_k}}(t) \;, \hat e(t)&=\sum_{k=1}^\infty e_k \indi{\coint{t_{k-1},t_k}}(t) \eqsp,
\end{align*}
where $\sequence{t}[k][\nset^*]$ is defined in \eqref{eq:def_t}. Denote
\begin{equation}\label{def:hat_functions}
\hat V_0(t)=\int_0^t \hat v(s) \rmd s,\quad\hat R_0(t)=\int_0^t \hat r(s) \rmd  s,\quad \hat E_0(t)=\int_0^t \hat e(s) \rmd s\;.
\end{equation}
Analogously to \eqref{eq:shift}, for any $k\in\nset^*$ we denote by $\hat V_k$, $\hat E_k$, $\hat R_k$  the shifts of $\hat V_0, \hat E_0,\hat R_0$ respectively.
With this notation for any $k\in\nset^*$ we can decompose $X_k(t)$ as follows
\begin{equation}\label{eq:decompose_X}
X_k(t) = X_0(t + s_k) = x_0 + \hat{V}_k(t) + \hat{R}_k(t) + \hat {E}_k(t)\;.
\end{equation}
Without loss of generality we can assume that $\sum_{k=1}^{\infty} \gamma_k e_k = 0$ (if this is not true, we can just modify  $r_1$ and  $e_1$).
\begin{theorem}\label{thm:limit_compact}
  Let $\sequence{x}[k][\nset^*]$ be the PAD \eqref{eq:PAD-1}. Assume that conditions
  (A\ref{ass:open}--\ref{ass: limit_y_x}) hold, and there exists a
  compact set $K \subset \Xset$ such that $x_k \in K$ for any $k \geq 0$.
  Then
  \begin{enumerate}[(i)]
  \item the family of functions $\sequence{X}[k][\nset^*]$, defined by
    \eqref{eq:shift} is precompact in the topology of compact
    convergence: for any increasing sequence $\sequence{n}[k][\nset^*]$ of positive
    integers, there exist a subsequence
    $\sequence{\tilde n}[k][\nset^*]$ and an absolutely continuous
    function $X_\infty: \coint{0, +\infty} \to K$ such that for any
    $T>0$,
    $\lim_{k \to \infty}\sup_{t \in \ccint{0,T}} \vectornorm{X_{\tilde
        n_k}(t) - X_\infty(t)} = 0$ and $X_\infty$ is a solution of differential inclusion
         \[
   \dot{x} \in F(x)\eqsp.
    \]
  \item In addition, for any $t \geq 0$, $X_\infty(t)$ is a limiting
    point of the sequence $\sequence{x}[k][\nset^*]$.
  \end{enumerate}
\end{theorem}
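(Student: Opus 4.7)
}
The plan is to establish precompactness of $\sequence{X}[k][\nset^*]$ by Arzelà–Ascoli, pass to a limit, show that the limit solves the differential inclusion via upper hemicontinuity and convexity of $F$, and finally read off the last statement from the construction of $X_0$ on a finer and finer grid.

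First I would verify the hypotheses of Arzelà–Ascoli on the family $\sequencePar{X}[k][\nset^*]$ restricted to any interval $\ccint{0,T}$. Uniform boundedness is immediate because each $X_k(t)$ lies in $\operatorname{conv}(K)$, which is compact in $\rset^d$. For equicontinuity, I would use the decomposition \eqref{eq:decompose_X}. Since $K$ is compact and $\Xset$ is open, $K$ has a compact neighbourhood $K' \subset \Xset$; by (A\ref{ass: limit_y_x}), $y_k \in K'$ for $k$ large, and by (A\ref{ass:open}) $F$ is locally bounded, so there exists $M>0$ with $\vectornorm{v_k} \leq M$ eventually, making $\hat V_0$ (hence all $\hat V_k$) uniformly Lipschitz on $\coint{0,\infty}$. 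The remainder term $\hat R_k$ vanishes uniformly on $\ccint{0,T}$ because $\vectornorm{r_k}\to 0$ by (A\ref{ass:noise}), and $\hat E_k$ vanishes uniformly on $\ccint{0,T}$ because $\sum_j \gamma_j e_j$ converges (Cauchy tails). This gives equicontinuity, so the Arzelà–Ascoli theorem furnishes a subsequence $\sequence{\tilde n}[k][\nset^*]$ and a continuous $X_\infty$ with $X_{\tilde n_k} \to X_\infty$ uniformly on every compact interval. Absolute continuity follows because $X_\infty - x_0$ is the uniform limit of the $M$-Lipschitz functions $\hat V_{\tilde n_k}$, the noise contributions vanishing in the limit.

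The crucial step is the differential inclusion $\dot X_\infty(t)\in F(X_\infty(t))$ a.e. Fix $t\geq 0$ at which $\dot X_\infty(t)$ exists, and fix $\varepsilon>0$. By upper hemicontinuity of $F$ at $X_\infty(t)$ and convex-compactness of $F(X_\infty(t))$, there is a neighbourhood $V$ of $X_\infty(t)$ such that $F(z)\subseteq F(X_\infty(t))+\ball(0,\varepsilon)$ for all $z\in V$. Uniform convergence $X_{\tilde n_k}\to X_\infty$ together with (A\ref{ass: limit_y_x}) and the relation between $y_j$ and $x_j$ implies that for all sufficiently large $k$ and all $s$ in a small fixed interval $\ccint{0,h}$, the relevant $y$-values feeding $\hat V_{\tilde n_k}$ on $\ccint{t,t+h}$ lie in $V$, hence $\dot{\hat V}_{\tilde n_k}(s)\in F(X_\infty(t))+\ball(0,\varepsilon)$ almost everywhere on $\ccint{t,t+h}$. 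Averaging and using convexity of $F(X_\infty(t))+\ball(0,\varepsilon)$ yields
\[
h^{-1}\bigl(\hat V_{\tilde n_k}(t+h)-\hat V_{\tilde n_k}(t)\bigr) \in F(X_\infty(t))+\ball(0,\varepsilon).
\]
Letting $k\to\infty$, using that $\hat R_{\tilde n_k}$ and $\hat E_{\tilde n_k}$ vanish uniformly, then $h\downarrow 0$, and finally $\varepsilon\downarrow 0$ (where compactness of $F(X_\infty(t))$ allows us to close the set), gives $\dot X_\infty(t)\in F(X_\infty(t))$.

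For the final assertion, fix $t\geq 0$. For each $k$ the number $t+s_{\tilde n_k}$ lies in some interval $\ccint{t_{m_k-1},t_{m_k}}$, and since $\gamma_{m_k}\to 0$ we have $\vectornorm{X_0(t+s_{\tilde n_k})-x_{m_k}}\leq \gamma_{m_k}\to 0$. Combining this with $X_{\tilde n_k}(t)=X_0(t+s_{\tilde n_k})\to X_\infty(t)$ shows that $x_{m_k}\to X_\infty(t)$, so $X_\infty(t)$ is a limit point of $\sequence{x}[k][\nset^*]$. The main obstacle I anticipate is the careful handling of the inclusion step above: because the selection $v_j$ comes from $F(y_j)$ rather than $F(x_j)$ or $F(X_{\tilde n_k}(s))$, one must combine (A\ref{ass: limit_y_x}), upper hemicontinuity and the convex-closedness of $F(X_\infty(t))$ simultaneously, and ensure that the averaging–limit argument genuinely lands in $F(X_\infty(t))$ rather than merely its closed convex hull.
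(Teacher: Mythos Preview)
Your overall strategy is sound and the proof goes through, but your route to the differential inclusion step is genuinely different from the paper's. You argue pointwise at each differentiability point $t$ of $X_\infty$: trap the relevant $y_j$'s in a small neighbourhood of $X_\infty(t)$ via upper hemicontinuity, so that the piecewise-constant derivative $\dot{\hat V}_{\tilde n_k}$ lives in $F(X_\infty(t))+\ball(0,\varepsilon)$ on a short interval, then average, pass to the limit in $k$, $h$, $\varepsilon$. The paper instead works globally on each interval $[a,b]$: it shows the weak derivatives $\dot M_k$ are uniformly integrable, hence weakly relatively compact in $L_1([a,b])$ by Dunford--Pettis (\Cref{lem:weak_conv}), then invokes Mazur's lemma (\Cref{lem:L1-conv-comb}) to extract a convex-combination subsequence converging a.e., and finally applies \Cref{lem:convex_lim} to land the a.e.\ limit in $F(X_\infty(t))$. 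Your argument is more elementary---no Banach-space weak topologies---and is closer in spirit to the classical proofs in Aubin--Cellina or Bena\"im--Hofbauer--Sorin. The paper's functional-analytic route pays off in \Cref{thm:limit_projected}, where one must simultaneously identify limits of the field part and the projection/normal-cone part; the Mazur device handles both components in one stroke, whereas your secant argument would need a separate treatment of the reflection term.

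One small correction: with the paper's shift convention $\hat R_k(t)=\hat R_0(t+s_k)$ (no recentring), $\hat R_k$ does \emph{not} vanish uniformly on $[0,T]$; only its increments $\hat R_k(t+h)-\hat R_k(t)=\int_{s_k+t}^{s_k+t+h}\hat r$ do, since $r_k\to 0$. Correspondingly, $X_\infty-x_0$ is not the limit of $\hat V_{\tilde n_k}$ alone but of the uniformly Lipschitz sums $\hat V_{\tilde n_k}+\hat R_{\tilde n_k}$ (only $\hat E_{\tilde n_k}$ genuinely vanishes). This does not affect your equicontinuity or inclusion arguments---both need only the Lipschitz property of $\hat R_k$ and the vanishing of its increments---but the two sentences where you assert $\hat R_k\to 0$ should be rephrased accordingly.
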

\begin{proof1}
 We divide the proof into three steps.

\begin{step}
 \label{lem:equicontinuity}
   The family of functions $\sequence{X}[k][\nset^*]$, defined by
    \eqref{eq:shift} is precompact in the topology of compact
    convergence.  
 \end{step}
\begin{proof}
First we prove equicontinuity, cf. \Cref{def:pointwise-equicont}, of the sequence of functions $\sequence{X}[k][\nset]$.
The proof follows essentially by the same arguments as in \cite[Theorem 2.3.1]{kushner2012stochastic}, but we included it for completeness.
Obviously equicontinuity (see \Cref{def:pointwise-equicont})  of all terms on the RHS of \eqref{eq:decompose_X} implies
pointwise equicontinuity of the family of functions $\sequence{X}[k][\nset^*]$. Assumption (A\ref{ass:noise}) implies boundedness of the sequence $\sequence{r}[k][\nset^*]$.
Therefore the sequence of functions $\sequence{\hat R}[k][\nset^*]$ is equicontinuous since for each $k\in\nset^*$, $\hat R_k$ is Lipschitz continuous with  constant
$\lip{\hat R_k}=\sup_{\ell \in \nset^*} \vectornorm{r_\ell}$ .

Consider now $\sequence{\hat V}[k][\nset^*]$.  Since the sequence $\sequence{x}[k][\nset]$ belongs to the compact set $K$, under (A\ref{ass: limit_y_x})
the sequence $\sequence{y}[k][\nset]$ belongs to a compact neighborhood of $K$. By (A\ref{ass:open}) $F$ is locally bounded so
the sequence $\sequence{v}[k][\nset]$ is also bounded and  functions $\sequence{V}[k][\nset^*]$ are Lipschitz continuous with $\lip{\hat V_k}=\sup_{\ell \in \nset^*} \vectornorm{v_\ell}$, which implies equicontinuity.

Consider finally $\sequence{\hat E}[k][\nset^*]$.
For any arbitrarily chosen $t\in\rset_+$, $k,n \in \nset^*$, define
$b_{k,n}^t$
\[
b_{k,n}^t = \sup_{\{s : |s - t| <1/n \} } \vectornorm{\hat{E}_k(s) - \hat{E}_k(t)},\quad
\]
and consider the sequence $\sequence{a^t}[n][\nset]$ given for $n \in \nset^*$ by
\[
a_n^t = \sup_{k \geq 1} b_{k,n}^t\;.
\]
By construction $\sequence{a^t}[n][\nset^*]$ is nonincreasing and nonnegative. Hence $\sequence{a^t}[n][\nset^*]$
converges to some limit. Moreover, by assumption (A\ref{ass:noise}) the series $\sum_{k=1}^\infty \gamma_ke_k$
converges, so for any $\epsilon>0$, there exists $N\in\nset^*$ such that
\begin{equation}\label{eq:series}
\sup_{m\geq l\geq N}\vectornorm{\sum_{i=l}^m\gamma_i e_i} \leq\epsilon\;.
\end{equation}
First we observe that, since $\lim_{k \to \infty} s_k = \infty$,
the set $\mathcal{E}_N^t=\{k\colon s_k<t_N-t+1\}$ is
finite. Therefore, since for each $\ell \in \nset^*$ the functions $\hat{E}_\ell$ is continuous,  there exists $N'\geq N$ such that for any $k\in \mathcal{E}_N^t$ and
$n\geq N'$ we have $b_{k,n}^t\leq \epsilon$.  Now assume that
$k\not\in\mathcal{E}_N^t$.  Then for all $n\geq1$ and $s$ such that $|t-s|\leq
1/n$, we get $s + s_k \geq t_N$ and by \eqref{eq:series} for all $n\geq1$ and
$k\notin\mathcal{E}_N^t$ we get
\begin{equation*}
b_{k,n}^t= \sup_{\{s : |s - t| <1/n \} } \vectornorm{\hat{E}_k(s) - \hat{E}_k(t)}
\leq3\sup_{m\geq l\geq N}\vectornorm{\sum_{i=l}^m\gamma_i e_i}\leq 3\epsilon\;.
\end{equation*}
Therefore for $n\geq N'$ we have $a_n^t\leq 3 \epsilon$.
Since $\epsilon$ was arbitrary positive number, we get that $\lim_{n \rightarrow \infty} a_n^t = 0$. T
Hence, for all $\epsilon > 0$, there exists $N''  > 0$ such that
\[
\text{for all $s \in \rset$, $|t-s| < 1/N''$} \quad \Rightarrow \quad \sup_{k \geq
  1} \vectornorm{\hat{E}_k(t) - \hat{E}_k(s)} \leq \epsilon\eqsp.
\]
That is the family $\sequence{\hat{E}}[k][\nset^*]$  is pointwise equicontinuous at an arbitrary point $t$.
Together with equicontinuity of $\sequence{\hat V}[k][\nset^*]$ and $\sequence{\hat R}[k][\nset^*]$ it give us pointwise equicontinuity of $\sequence{X}[k][\nset^*]$.
Since by assumption $\sequence{x}[k][\nset^*]$ remains in the compact set $K$ so functions $\sequence{X}[k][\nset^*]$
are uniformly bounded  and we can apply Arzela-Ascoli theorem (\Cref{thm:Arzela-Ascoli}), showing that,  from every subsequence of $\sequence{X}[k][\nset^*]$, we can choose a
 further subsequence that converges uniformly on compact intervals, to some continuous limit $X_\infty$.
\end{proof}

\begin{step}
 \label{lem:limit_points}
 Any limit $X_\infty$ of converging subsequence is absolutely continuous and for almost every $t\geq 0$ there exists subsequence $\sequence{n}[k][\nset]$ such that
 \[
  X_\infty(t)=\lim_{k\to\infty} x_{n_k} =\lim_{k\to\infty} y_{n_k}\eqsp.
  \]
\end{step}
\begin{proof}

 Let $\subsequence{X}[n][k][\nset]$  be a subsequence that converges compactly to $X_\infty$.
 We start by proving that $X_\infty$ is absolutely continuous on compact intervals.
It is clear that $\subsequence{\hat E}[n][k][\nset]$ converges compactly to a function, that is equal to $\sum_{k=1}^\infty \gamma_k e_k =0$ everywhere.
That means, that the sequence
\begin{equation}
\label{eq:defintion-M-k}
M_k = x_0 + \hat{V}_{n_k} + \hat{R}_{n_k}
\end{equation}
converges compactly to the same limit as $X_{n_k}$.
Recall that $\hat V_k$ and $\hat R_k$  are Lipschitz continuous with constant independent on $k$.
Therefore all functions $M_k$ are Lipschitz continuous with common constant
and $X_\infty$, which is equal to  the limit of $\sequence{M}[k][\nset]$,  is also Lipschitz continuous and hence absolutely continuous on compact intervals.

For any $t \geq 0$ and $k \in \nset$ let us define $m(k,t)$ by
\begin{equation}\label{def:mkt}
m(k, t) = \min \{n \in \mathbb{N} : t_n > t + s_k \}\;,
\end{equation}
where $t_n$ is defined in \eqref{eq:def_t}. By assumption (A\ref{ass:stepsize}) $m(k,t)$ is well defined and converges to $\infty$ as $k\to\infty$.

By construction, for each $k \in \nset$ we get that
\begin{equation}
\label{eq:definition-X-n-k}
X_{n_k}(t_{m(n_k,t)}-s_{n_k})=x_{m(n_k,t)} \eqsp.
\end{equation}
Moreover, since $\lim_{k \rightarrow \infty} \gamma_k = 0$ and $\lim_{k \rightarrow \infty} s_k = \infty$, by \eqref{def:mkt} we have
\begin{equation}\label{eq:limeps}\lim_{k \rightarrow \infty} \{t_{m(n_k,t)}-s_{n_k}-t\} =0 \eqsp.\end{equation}
By the triangle inequality we get that
\begin{multline}\label{eq:y_minus_X}
\vectornorm {y_{m(n_k,t)} - X_\infty(t)} \\
\leq \vectornorm {y_{m(n_k,t)} - x_{m(n_k,t)}} + \vectornorm{ x_{m(n_k,t)} - X_\infty(t_{m(n_k,t)}-s_{n_k})} + \vectornorm{ X_\infty(t_{m(n_k,t)}-s_{n_k}) - X_\infty(t)}
\end{multline}
By assumption $s_k$ converges to $\infty$, so also $m(n_k,t)$ goes to $\infty$ as $k\to\infty$. Therefore by
assumption (A\ref{ass: limit_y_x}) the first part of the RHS of \eqref{eq:y_minus_X} converges to $0$.
By \eqref{eq:definition-X-n-k}, the second term in the RHS of \eqref{eq:y_minus_X} is equal to
\[\vectornorm{X_{n_k}(t_{m(n_k,t)}-s_{n_k}) - X_\infty(t_{m(n_k,t)}-s_{n_k})}\]
which goes to zero by uniform convergence of $X_{n_k}$ to $X_\infty$.
Finally, continuity of $X_\infty$ implies that the last term of \eqref{eq:y_minus_X} also converges to $0$.
All together we have therefore established that
\begin{equation}\label{eq:limy}
\lim_{k \rightarrow \infty} y_{m(n_k,t)} = X_\infty(t)\eqsp.
\end{equation}
\end{proof}

\begin{step}
 \label{lem:identify_weak_derivative}
  The limit $X_\infty$ is a solution of differential inclusion $\dot{x}(t) \in F(x(t))$.
\end{step}

\begin{proof}

We denote by  $G$ a weak derivative of $X_\infty$. We will prove that $G(t) \in F(X_\infty(t))$ for almost every $t \in\rset_+$.
By the definition \eqref{def:hat_functions}, for each $k \in \nset^*$ and almost every $t \in \rset_+$,  the weak derivatives of $M_{k}$
(see \eqref{eq:defintion-M-k}) at $t$ is equal to
$\dot{M}_k(t)=\hat v(t+s_{n_k})+\hat r(t+s_{n_k})$. Because $\sup_k (\vectornorm{ v_k}+\vectornorm{ r_k})<\infty$,
the functions $\sequence{\dot{M}}[k][\nset]$ are
uniformly integrable on finite intervals.
Thus, from \Cref{lem:weak_conv}, for any $0<a<b<\infty$
the sequence $\sequence{\indi{[a,b]}\dot{M}}[k][\nset]$  converges weakly to  $\indi{[a,b]}G$ in $L_1([a,b])$.
 From \Cref{lem:L1-conv-comb} there exists $\sequence{\dot{M}^w}[k][\nset]$ a convex combination subsequence
 (see \Cref{def:convex_combination}) of $\sequence{\dot{M}}[k][\nset]$ that converges to $G$ almost everywhere on $[a,b]$, \ie\ for almost every $t \in [a,b]$, $\lim_{k\to\infty}\dot{M}^w_k(t)=G(t)$.
By construction for any $t \in\rset_+$, we get $\dot M^w_k(t)=\hat v^w(t + s_{n_k})+\hat r^w(t + s_{n_k})$ and
\[
\hat v^w(t+s_{n_k})= \sum_{j=1}^\infty w_{n,j}   v_{m(n_j,t)} \quad \text{and} \quad \hat r^w(t+s_{n_k}) = \sum_{j=1}^\infty w_{n,j} r_{m(n_j,t)} \;.
\]
By assumption (A\ref{ass:noise}) for any $t\in\rset_+$, $\lim_{k\to\infty}\vectornorm{ r_{m(n_k,t)}}=0$ and hence $\lim_{k\to\infty}\hat r^w(t+s_{n_k}) =0 $
 It follows, that the for almost every $t\in[a,b]$ $\lim_{k\to\infty} \hat v^w(t+s_{n_k})=G(t)$. But we have for all $t\in\rset_+$, $v_{m(n_k,t)} \in F(y_{m(n_k,t)})$ and by \eqref{eq:limy} we know that $\lim_{k \to \infty} y_{m(n_k,t)}=X_\infty(t)$.
Since $F$ is upper hemicontinous and closed convex, we apply  \Cref{lem:convex_lim} to conclude that $G(t)\in F(X_\infty(t))$ for almost every $t\in[a,b]$.

We have proven that $G(t) \in F(X_\infty(t))$ for almost all $t \in [a,b]$, where $[a,b]\subset \rset_+$ is an arbitrary compact interval.
We can cover the real line $\rset_+$ by  a countable family of compact intervals of form $[0,\ell]$ for $\ell\in\nset^*$. Let  $\sequence{n}[k][\nset^*]$ be a sequence.
For each $\ell\in\nset^*$  can extract
subsequence $\sequence{n^{\ell}}[k][\nset^*]\subseteq\sequence{n^{\ell-1}}[k][\nset^*]$ such that $\{X_{n^\ell_k},\eqsp k\in\nset^*\}$ converges uniformly on $[0,\ell]$.
Setting $\tilde n_k=n_k^k$ we get that there exists function $X_\infty: \rset\mapsto\rset^d$ such that that for any
    $T>0$,
    $\lim_{k \to \infty}\sup_{t \in \ccint{0,T}} \vectornorm{X_{\tilde
        n_k}(t) - X_\infty(t)} = 0$.  and   $\dot X_\infty(t) \in F(X_\infty(t))$ for almost all $t \in \rset$.
This is equivalent to saying, that $X_\infty$ is a solution of the differential inclusion $\dot{x}\in F(x)$.
\end{proof}

\end{proof1}

Combining \Cref{thm:limit_compact}  with stability properties of underlying differential inclusion $\dot x\in F(x)$ we establish convergence of PADs.
To state the result we need to define a set valued Lie derivative, introduced in \cite{bacciotti1999stability}.
\begin{definition}\label{def:Lie-derivative}
Let $\Xset \subseteq \rset^d$  be an open subset, $F: \Xset \rightarrow \rset^{n}$ be a set-valued map, and $V:\rset^d\to \rset_+$ be a locally Lipschitz function.
The set-valued Lie derivative of $V$ with respect to $F$ at $x$  is defined by
\[
\mathcal{L}_{F}V (x) = \left\{a \in \rset : \exists { v \in F(x)}\text{ such that } \pscal{v} {w} = a\eqsp, \forall w \in \bar{\partial}V(x) \right\}\eqsp,
\]
where $\bar{\partial}V$ is the Clarke generalized gradient of $V$, cf. \Cref{def:clarke-gradient}.
\end{definition}
The Lie derivative plays important role in analysis of stability of solution of differential inclusions. We in particular will used an important property stated in \cite[Lemma 1]{bacciotti1999stability}.
\begin{lemma} \label{lem:Lie-Lyapunov-regular}
Let $F: \Xset \rightarrow \mathcal{P}(\rset^d)$ be a set-valued map on an open domain $\Xset$, $I\in\rset$ be an interval, and assume that there exists  $\phi: I \rightarrow \Xset$ a solution of the differential inclusion $\dot{x} \in F(x)$. Let $V$ be a locally Lipschitz regular function defined on $\Xset$.
Then $\frac{d}{dt} V(\phi(t))$ exists for almost all $t \in I$, and  for almost all $t \in I$ we have:
\[
\frac{d}{dt} V(\phi(t)) \in \mathcal{L}_F V(\phi(t))
\]
where $\mathcal{L}_F V$ is the set-valued Lie derivative of $V$ with respect to $F$, cf. \Cref{def:Lie-derivative}.
\end{lemma}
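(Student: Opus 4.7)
\begin{proof1}[Proof plan.]
The strategy is to exploit the two defining features of $\phi$ and $V$ separately and then combine them at a generic point of differentiability. Since $\phi$ is absolutely continuous on compact subintervals and $V$ is Lipschitz on any compact neighborhood of $\phi([a,b]) \subset \Xset$, the composition $V \circ \phi$ is locally absolutely continuous and hence differentiable almost everywhere by Lebesgue's theorem. Independently, $\phi$ is differentiable almost everywhere with $\dot\phi(t) \in F(\phi(t))$. Let $E \subset I$ denote the full-measure set on which both $\frac{d}{dt}V(\phi(t))$ and $\dot\phi(t)$ exist and $\dot\phi(t) \in F(\phi(t))$; the argument will be carried out for $t \in E$.

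The plan for a fixed $t \in E$ is to identify $\frac{d}{dt}V(\phi(t))$ by computing the right and left derivatives separately and using the regularity of $V$. Writing $\phi(t+h) = \phi(t) + h\dot\phi(t) + o(h)$ and using the local Lipschitz continuity of $V$ to absorb the $o(h)$ perturbation, I would show that
\[
\lim_{h \downarrow 0} \frac{V(\phi(t+h)) - V(\phi(t))}{h} = \lim_{h \downarrow 0} \frac{V(\phi(t) + h\dot\phi(t)) - V(\phi(t))}{h}.
\]
By \Cref{def:regular-function} applied in the direction $\dot\phi(t)$, this right derivative equals $V^0(\phi(t), \dot\phi(t))$. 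An analogous computation from the left, using regularity in the direction $-\dot\phi(t)$, gives that the left derivative of $V \circ \phi$ at $t$ equals $-V^0(\phi(t), -\dot\phi(t))$.

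The next step is to invoke the standard support-function characterization of the Clarke gradient (\cite[Proposition~2.1.2]{clarke1990optimization}): $V^0(x,h) = \max_{w \in \bar\partial V(x)} \pscal{w}{h}$. Since $V \circ \phi$ is differentiable at $t \in E$, the left and right derivatives coincide, giving
\[
\max_{w \in \bar\partial V(\phi(t))} \pscal{w}{\dot\phi(t)} = -\max_{w \in \bar\partial V(\phi(t))} \pscal{w}{-\dot\phi(t)} = \min_{w \in \bar\partial V(\phi(t))} \pscal{w}{\dot\phi(t)}.
\]
Thus the linear functional $w \mapsto \pscal{w}{\dot\phi(t)}$ is constant on the compact convex set $\bar\partial V(\phi(t))$; call its common value $a$. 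Since $v \eqdef \dot\phi(t) \in F(\phi(t))$ and $\pscal{v}{w} = a$ for every $w \in \bar\partial V(\phi(t))$, the definition (\Cref{def:Lie-derivative}) gives $a \in \mathcal{L}_F V(\phi(t))$, and by construction $a = \frac{d}{dt}V(\phi(t))$, which is the claim.

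The main obstacle I anticipate is the careful justification of the $o(h)$ absorption step together with the equality of one-sided derivatives: one must verify that local Lipschitzness of $V$ on a neighborhood of $\phi(t)$ combined with $\phi(t+h) - \phi(t) - h\dot\phi(t) = o(h)$ is enough to replace $\phi(t+h)$ by the affine surrogate $\phi(t) + h\dot\phi(t)$ in the difference quotient. This is routine but is where the regularity of $V$ is essential — without it, only the $\limsup$ would be dominated by $V^0(\phi(t), \dot\phi(t))$, and the two-sided equality forcing $w \mapsto \pscal{w}{\dot\phi(t)}$ to be constant on $\bar\partial V(\phi(t))$ would fail.
\end{proof1}
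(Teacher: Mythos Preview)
Your argument is correct and is essentially the standard proof of this result. Note, however, that the paper does not supply its own proof of this lemma: it merely cites \cite[Lemma~1]{bacciotti1999stability} and states the result. Your proposal reproduces precisely the argument given there --- absolute continuity of $V\circ\phi$, the $o(h)$ replacement via local Lipschitzness, regularity to identify the one-sided derivatives with $V^0(\phi(t),\pm\dot\phi(t))$, and the support-function representation of $\bar\partial V$ to force $w\mapsto\pscal{w}{\dot\phi(t)}$ to be constant on $\bar\partial V(\phi(t))$. There is therefore nothing to compare beyond noting that you have filled in the details the paper outsourced.
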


\begin{theorem}\label{thm:convergence}
  Let $\Xset\subseteq \rset^d$  be an open subset and  $F: \Xset\mapsto\mathcal{P}(\rset^d)$ be a set valued map,
$\sequence{x}[k][\nset^*]$ be a PAD of $F$ with step sizes
  $\sequence{\gamma}[k][\nset^*]$ and perturbations
  $\sequence{\eta}[k][\nset^*]$. Assume that conditions
  (A\ref{ass:open}--\ref{ass: limit_y_x}) hold, and there exists a
  compact set $K \subset \Xset$ such that $x_k \in K$ for any
  $k \geq 1$.
Let $V : \Xset \rightarrow \rset$ be a locally Lipschitz, regular
function.  Suppose that there exists an upper semicontinuous function
$U : \Xset \rightarrow \rset$, such that for all $x\in K$
\[
\sup \mathcal{L}_F V(x) \leq U(x) \leq 0,
\]
and set $\mathcal{S} \eqdef \{x \in \Xset : U(x) = 0 \}$.
\begin{enumerate}[(i)]
\item The image by $V$ of the set of limiting points of
  $\sequence{x}[k][\nset^*]$ is a compact interval in
  $V(\mathcal{S} \cap K)$.
\item If $V(\mathcal{S} \cap K)$ has empty interior, then $\sequence{x}[k][\nset^*]$
converges to $K \cap \mathcal{S}$.
\end{enumerate}
\end{theorem}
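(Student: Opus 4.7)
The plan is to combine \Cref{thm:limit_compact}, which identifies every shifted uniform limit of the piecewise linear interpolation as a solution of $\dot x \in F(x)$ staying in $K$, with the Lyapunov monotonicity provided by \Cref{lem:Lie-Lyapunov-regular}. Write $L$ for the set of limit points of $\sequence{x}[k][\nset^*]$; since the iterates live in the compact $K$, the set $L$ is non-empty, closed and contained in $K$. First I would establish that from every $x^\star \in L$ there issues a DI solution $\phi:\coint{0,\infty}\to K$ with $\phi(0)=x^\star$ and $\phi(t)\in L$ for all $t\ge 0$: given $x_{k_n}\to x^\star$, apply \Cref{thm:limit_compact} with $s_n=t_{k_n}$ and use part~(ii) of that theorem. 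Then \Cref{lem:Lie-Lyapunov-regular}, combined with $\sup\mathcal{L}_F V\le U\le 0$, makes $t\mapsto V(\phi(t))$ absolutely continuous and non-increasing.

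The second ingredient is that $V(L)$ is a compact interval. The iterate jumps vanish: $\vectornorm{x_k-x_{k-1}}=\gamma_k\vectornorm{v_k+e_k+r_k}\to 0$, since $\gamma_k\to 0$, $\sequence{v}[k][\nset^*]$ is bounded by local boundedness of $F$ over a compact neighborhood of $K$ containing $\sequence{y}[k][\nset^*]$, $\gamma_k e_k\to 0$ (a necessary condition for $\sum_k\gamma_k e_k$ to converge), and $r_k\to 0$. Continuity of $V$ on $K$ gives $|V(x_k)-V(x_{k-1})|\to 0$, and a bounded real sequence with vanishing increments has for its accumulation set the full closed interval $[\liminf,\limsup]$. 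Continuity of $V$ and compactness of $K$ identify this with $V(L)=[v^-,v^+]$.

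For the inclusion $V(L)\subseteq V(\mathcal{S}\cap K)$, fix $v\in V(L)$ and $x^\star\in L$ with $V(x^\star)=v$, and let $\phi$ be the associated trajectory. Set $T=\sup\ensemble{t\ge 0}{V(\phi(s))=v \text{ for all } s\in\ccint{0,t}}$. When $T>0$, $V\circ\phi$ is constant on $\ccint{0,T}$, its a.e.\ derivative vanishes, and the Lie derivative bound together with $U\le 0$ forces $U(\phi(t))=0$ for a.e.\ $t\in\ccint{0,T}$; upper semicontinuity of $U$ combined with $U\le 0$ makes $\mathcal{S}=\{U=0\}$ closed, so continuity of $\phi$ yields $x^\star=\phi(0)\in\mathcal{S}$ and hence $v\in V(\mathcal{S}\cap K)$. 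For the endpoint $v=v^-$ the condition $T>0$ is automatic, because forward invariance forces $V\circ\phi\ge v^-$ and monotonicity gives $V\circ\phi\le v^-$, so $V\circ\phi\equiv v^-$. The main obstacle is the general case $T=0$, where the trajectory from every such $x^\star$ strictly decreases $V$ from $v$; I would handle this via the LaSalle-type energy estimate $\int_0^\infty (-U(\phi(t)))\,\rmd t\le V(x^\star)-\inf_{x\in K}V(x)<\infty$, which forces $\phi(t)$ to enter $\{U\ge -\epsilon\}$ at arbitrarily large times, and then use upper semicontinuity of $U$ together with the interval structure of $V(L)$ and continuity of $V\circ\phi$ to realize $v$ as the $V$-value of an $\omega$-limit point of $\phi$ in $L\cap\mathcal{S}$.

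Part~(ii) follows readily from part~(i): if $V(\mathcal{S}\cap K)$ has empty interior in $\rset$, the compact interval $V(L)\subseteq V(\mathcal{S}\cap K)$ collapses to a single point $\{v^\star\}$, so $V\equiv v^\star$ on $L$; for any $x^\star\in L$ the trajectory $\phi$ then satisfies $V\circ\phi\equiv v^\star$, placing us in the case $T=\infty$ of the previous step, which gives $x^\star\in\mathcal{S}$. Thus $L\subseteq\mathcal{S}\cap K$, and compactness of $K$ together with the definition of limit points yields $\dist(x_k,\mathcal{S}\cap K)\to 0$. The main difficulty throughout is clearly the $T=0$ case of the inclusion $V(L)\subseteq V(\mathcal{S}\cap K)$; all remaining steps are routine applications of \Cref{thm:limit_compact}, \Cref{lem:Lie-Lyapunov-regular}, and elementary compactness and continuity arguments.
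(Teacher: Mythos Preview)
Your reduction of part~(ii) to part~(i), the interval structure of $V(L)$ via vanishing jump sizes, and the treatment of the bottom endpoint $v=v^-$ (where forward invariance of $L$ under DI trajectories forces $V\circ\phi\equiv v^-$, hence $T=\infty$ and $x^\star\in\mathcal{S}$) are all correct. The genuine gap is the case $T=0$ in part~(i). Your LaSalle estimate $\int_0^\infty(-U(\phi(t)))\,\rmd t<\infty$ does show that the $\omega$-limit set of $\phi$ lies in $\mathcal{S}\cap K$, but every $\omega$-limit point of $\phi$ carries the $V$-value $v_\infty:=\lim_{t\to\infty}V(\phi(t))$, and $v_\infty<v$ strictly whenever $T=0$. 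So you obtain $v_\infty\in V(\mathcal{S}\cap K)$, not $v\in V(\mathcal{S}\cap K)$, and the phrase ``realize $v$ as the $V$-value of an $\omega$-limit point of $\phi$'' cannot be made to work. In fact the two properties you actually use---that through every point of $L$ there passes a forward DI solution staying in $L$, and that $V(L)$ is an interval---are by themselves insufficient for $V(L)\subseteq V(\mathcal{S}\cap K)$: for the planar field $\dot r=-(r-1)_+$, $\dot\theta=1$ with $V=r^2/2$ and $U=-r(r-1)_+$, the annulus $\{1\le r\le 2\}$ enjoys both properties, yet $V$ takes the values $\ccint{1/2,2}$ on it while $V(\mathcal{S})=\ccint{0,1/2}$.

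What your forward-trajectory viewpoint discards, and what the paper exploits, are the \emph{upward} excursions of the original interpolation $X_0$. Having established (as you do, and as the paper does in its Step~2) that the $\liminf$ value $V(x_*)$ lies in $V(\mathcal{S}\cap K)$, the paper supposes for contradiction that some accumulation value $V(\hat x)$ does not, and isolates levels $a_2<b_1$ with $V(x_*)<a_2<b_1<V(\hat x)$ and $\ooint{a_2,b_1}\cap V(\mathcal{S}\cap K)=\emptyset$. The interpolation must then cross upward from level $a_2$ to level $b_1$ infinitely often; recording the last exit time $l_j$ from $\{V<a_2\}$ before each entrance into $\{V>b_1\}$ and applying \Cref{thm:limit_compact} with shifts $s_j=l_j$ produces a DI solution $X_\infty$ with $V(X_\infty(0))=a_2$ and $V(X_\infty(t))>a_2$ on an interval $\ocint{0,\tilde S}$, along which $X_\infty$ lies in a compact set disjoint from $\mathcal{S}$. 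Upper semicontinuity gives $U\le-\delta<0$ there, so \Cref{lem:Lie-Lyapunov-regular} forces $V\circ X_\infty$ to decrease strictly below $a_2$---a contradiction. This crossing device encodes a recurrence property of $L$ strictly stronger than forward invariance, and it cannot be recovered from forward DI trajectories through $L$ alone.
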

\begin{proof}
\setcounter{St}{0}
The proof is divided into four steps.

\begin{step}
 The set $K \cap \mathcal{S}$ is nonempty.
\end{step}
\begin{proof}
By \Cref{thm:limit_compact}, there exists a solution  $X:\rset_+\mapsto \rset^d$ of the differential inclusion $\dot{x}\in F(x)$ satisfying $X(t)\in K$ for all $t\in\rset_+$.
By \Cref{lem:Lie-Lyapunov-regular} for almost all $t\in\rset_+$ $\frac{d}{dt} V(X(t))$
is well-defined and  we have:
\begin{equation*}
\frac{d}{dt} V(X(t)) \in \mathcal{L}_FV(X(t))\eqsp.
 \end{equation*}

If the $K \cap \mathcal{S}=\emptyset$  by upper semicontinuity of $U(x)$ and compactness of $K$
 we would have $\sup_{x \in K} U(x) = -\delta$ for some $\delta > 0$.
Therefore function $V \circ X$ must decrease at a rate at least $\delta$, and thus $\lim_{t \rightarrow \infty} V(X(t)) = -\infty$.
But this is a contradiction with the assumption that $V$ is bounded from below.
\end{proof}
Since $V$ is continuous and $K$ is compact,
\begin{equation}
\label{eq:definition-liminf}
L= \liminf_{k \in \nset} V(x_k) > - \infty \eqsp,
\end{equation}
and there exists $x_* \in K$ and a subsequence $\subsequence{x}[n][k][\nset^*]$ such that $\lim_{k\to\infty}x_{n_k}=x_*$  and $V(x_*)= L$.
\begin{step}\label{step:liminf_x}
If   $\subsequence{x}[n][k][\nset^*]$ is a subsequence  such  that $\lim_{k\to\infty}x_{n_k}=x_*$ and  $V(x_*)=L$ then $x^*\in\mathcal{S} \cap K$.
\end{step}

\begin{proof}
The proof is by contradiction. Assume that  
$x_* \not\in \mathcal{S} \cap K$. Therefore, we can find disjoint
open neighborhoods 
 $\Xset \supset A \supset \mathcal{S} \cap K$ and by $\Xset \supset B \ni x_*$.
Also, there exists $r > 0$, such that $\closure{\ball}(x_*, r)\eqdef\{y\in\rset^d\colon \vectornorm{y-x^*} \leq r\} \subseteq B$.
Define
\begin{equation}\label{eq:vmax}
v_{\max} = \sup_{x \in K} \sup_{z \in F(x)} \vectornorm{z},
\end{equation}
which is finite by local boundedness of $F$. 
We denote by $\Delta t = r/v_{\max}$. From Theorem~\ref{thm:limit_compact}  it follows, that on the interval  $[0,\Delta t]$
there exists a subsequence $\sequence{\tilde n}[k][\nset^*]\subseteq \sequence{n}[k][\nset^*]$ such that
$\lim_{k\to\infty}\sup_{t\in[0,\Delta t]} \vectornorm {X_{\tilde{n}_k}-X_\infty} =0$, where $X_\infty$ is a solution of $\dot x\in F(x)$.
Let $X_0$ be defined by \eqref{eq:def_X} and set, for all $k\in\nset$, $s_k=t_{\tilde{n}_k}$, where $\sequence{t}[k][\nset]$ is defined in \eqref{eq:def_t}.
By (A\ref{ass:stepsize}) $\lim_{k\to\infty}{s_k}=\infty$.
Consider $\sequence{X}[k][\nset]$ defined by \eqref{eq:shift}.
By definition of $\sequence{s}[k][\nset]$, for all $k$, we have $X_{k}(0) = x_{\tilde n_k}$ and since $\lim_{k\to\infty}x_{\tilde n_k}=x_*$
 we have $X_\infty(0) = x_{*}$. For almost every $t \in [0,\Delta t]$ we have $\dot X_\infty(t) \in F(X_\infty(t))$
 and hence by \eqref{eq:vmax} we get $\vectornorm {\dot X_\infty(t)} \leq v_{\max}$. Thus, for all $t\in[0,\Delta t]$,
\[
\vectornorm {X_\infty(t) - X_\infty(0)} = \vectornorm{ \int_{0}^t \dot X_\infty(s) \rmd s } \leq \int_{0}^t v_{\max} \rmd s \leq v_{\max} \Delta t  = r\eqsp,
\]
and hence $X_\infty(t) \in \ball(x_*, r)$. By upper semicontinuity of $U$ and compactness of $\closure{\ball}(x_*,r)$ we
get that there exists $\delta>0$ such that $\sup_{x \in \closure{B}(x_*,r)} U(x) = - \delta$, and,
using \Cref{lem:Lie-Lyapunov-regular}, we conclude that for almost every $t \in [0, \Delta t]$, $\frac{d}{dt} V(X_\infty(t)) \leq -\delta$.
This means, that
\begin{equation}\label{eq:vdelta}
V(X_\infty(\Delta t)) \leq V(X_\infty(0)) - \delta \Delta t = L - \delta \Delta t\;,
\end{equation}
where $L$ is defined in \eqref{eq:definition-liminf}. But, by \Cref{thm:limit_compact} for almost every $t\in[0,\Delta t]$, $X_\infty(t)$ is a
accumulation point of sequence $\sequence{x}[k][\nset^*]$ so \eqref{eq:vdelta} contradicts with $\liminf_{k \rightarrow \infty} V(x_k) = L$.
\end{proof}
\begin{step}\label{step:KS}
 If $\hat x$  is an accumulation point of the sequence $\sequence{x}[k][\nset^*]$, then  $V(\hat x)\in V(\mathcal{S} \cap K)$.
\end{step}
\begin{proof}
The proof is by contradiction. Assume that there exists a subsequence  $\subsequence{x}[n][k][\nset^*]$ such that  $\lim_{k\to\infty} x_{n_k}=\hat{x}$ and $V(\hat{x}) \not\in V(\mathcal{S} \cap K)$.
Then we have $\hat{x} \not\in \mathcal{S} \cap K$ as well. By \Cref{step:liminf_x}, we know that there exists another subsequence $\sequence{m}[l][\nset^*]$
such that $\lim_{l\to\infty} x_{m_l}={x_*}$, $V(x_*)=\liminf_k V(x_k)=L$
and $x_*\in \mathcal{S} \cap K$, where $L$ is defined in \eqref{eq:definition-liminf}. Note that $V(x_*)<V(\hat x)$. So, there exist $a_1<a_2<b_1<b_2$ such that intervals $\ooint{a_1,a_2}\ni V(x_*)$ , $\ooint{b_1,b_2}\ni V(\hat x)$ are disjoint and
 $\ooint{a_2,b_2}\cap V(\mathcal{S} \cap K)=\emptyset$.
We denote by   $A = V^{-1}(\ooint{a_1,a_2})$ and by  $B = V^{-1}(\ooint{b_1,b_2})$. Observe that sets $A$ and $B$ are also disjoint.

Since $\hat{x}\in B$ and $x_*\in A$, the function $X_0$, defined in \eqref{eq:def_X}, must go from $A$ to $B$ infinitely often.
More precisely, for $j\in\nset^*$ we can define three increasing sequences $\sequence{l}[j][\nset]$, $\sequence{\tilde l}[j][\nset]$, and  $ \sequence{r}[j][\nset]$ by recurrence as follows:
$r_0=l_0=\tilde l_0 =0$ and for $j\geq 1$, $\tilde l_j=\min\{t\geq r_{j-1}\colon X_0(t)\in A\}$, $r_j=\min\{t\geq \tilde l_{j}\colon X_0(t)\in B\}$ and
$l_j=\max\{t\leq r_j\colon X_0(t)\in A\}$.  Because $X_0$ is continuous, the sequences
$\sequence{l}[j][\nset],\eqsp \sequence{r}[j][\nset]$ are well defined. Since, sets $A$ and $B$ are disjoint and both contain accumulation points of $\sequence{x}[k][\nset^*]$,
by construction $\lim_{j \rightarrow \infty} l_j = \infty$, in
addition by continuity of $V$ we get  that $V(X_0(l_j)) = a_2$,
$V(X_0(r_j)) = b_1$.  Furthermore, for all $t \in (l_j, r_j)$ we have $V(X_0(t)) \in (a_1,b_2)$ and $X_0(t) \in K \setminus (A \cup B)$.

Consider the  sequence $\{r_j-l_j:\eqsp j\in\nset^*\}$ of positive numbers.  Set $S= \limsup_{j \to \infty} \{r_j - l_j\} \in \ccint{0,\infty}$ and let $\sequence{m}[j][\nset]$ be a sequence such that $\lim_{j\to\infty} \{r_{m_j} - l_{m_j}\}= S$.

Let $s_k=l_{m_k}$, $T>0$ and consider $\sequence{X}[k][\nset^*]$ defined in \eqref{eq:shift}.
By \Cref{thm:limit_compact} there exists $\sequence{\tilde n}[k][\nset^*]$ such that
$\lim_{k\to\infty}\sup_{t\in[0,T]} \vectornorm {X_{\tilde{n}_k}-X_\infty} =0$, where $X_\infty$ is a solution of $\dot x\in F(x)$.
First assume that $S=0$. By construction for all $k\in \nset^*$, $V(X_k(0))=a_2$ and $V(X_k(r_{m_k}-l_{m_k}))=b_1$. But this contradicts the continuity of $V\circ X_\infty$. Therefore we must have $S>0$.

Consider now the case: $S>0$. By construction, we may find $\tilde{S} \in \ooint{0,S}$ such that for large enough $k$ for all $t\in \ocint{0,\tilde{S}}$ we have $X_k(t)\in K \setminus (A \cup B)$ and $V(X_k(t))>a_2$. Therefore, for all $t\in \ocint{0,\tilde{S}}$ the limit $X_\infty$ also satisfies
$X_\infty(t)\in K \setminus (A \cup B)$, $V(X_\infty(t))>a_2$, $V(X_\infty(0))=a_2$. On the other hand, by upper semicontinuity of $U$ and compactness of $K \setminus (A \cup B)$ we
get that there exists $\delta>0$ such that $\sup_{x \in K \setminus A} U(x) = - \delta$ and by \Cref{lem:Lie-Lyapunov-regular} $V\circ X_\infty$
is strictly decreasing on $[0, \tilde{S}]$, which contradicts with $V(X_\infty(t))>a_2$, $V(X_\infty(0))=a_2$. Hence $V(\hat x)\in V(\mathcal{S} \cap K)$.

\end{proof}
\begin{step}\label{step:m}
If $\hat x$ is an accumulation point of $\sequence{x}[k][\nset]$, then $V(\hat x) =L$, where $L$ is defined in \eqref{eq:definition-liminf}.
\end{step}
\begin{proof}
Consider $X_0$, defined in \eqref{eq:def_X}.
Suppose there were two different points $x', x''$, that are accumulation points of $\sequence{x}[k][\nset^*]$,
with $V(x') < V(x'')$.  Then the sequence $\{ V(x_k), k \in \nset^*\}$ must oscillate between
between $V(x')$ and $V(x'')$. Let $v \in \ccint{V(x'),V(x'')}$.
We  define the sequence $\sequence{s}[k][\nset]$ by $s_0=0$ and for $k\geq 1$, $s_k=\min\{t> s_{k-1} \colon V(X_0(t))=v\}$, which is well defined
by continuity of $V\circ X_0$. Let $X_k$ be defined by \eqref{eq:shift}. For any $T>0$ by \Cref{thm:limit_compact} there exists $\sequence{\tilde n}[k][\nset^*]$ such that
$\lim_{k\to\infty}\sup_{t\in[0,T]} \vectornorm {X_{\tilde{n}_k}(t)-X_\infty(t)} =0$, where by construction $V(X_\infty(0))=v$ and $x'''= X_\infty(0)$ is accumulation point of $\sequence{x}[k][\nset^*]$.

Using \Cref{step:KS}, $V(x'),V(x''),V(x''')\in V(\mathcal{S} \cap K)$. Since $v\in[V(x'),V(x'')]$ is arbitrary, therefore, the whole interval $[V(x'), V(x'')]$
must be contained in $V(\mathcal{S} \cap K)$.
But this contradicts our assumption, that $V(\mathcal{S} \cap K)$ has empty interior.
Therefore, for any point $\hat{x}$ that is an
accumulation point of the sequence $\sequence{x}[k][\nset]$ we must have $V(\hat{x}) = L$.
\end{proof}
We can now conclude the proof of \Cref{thm:convergence}. Combining \Cref{step:liminf_x} with \Cref{step:m} we get that  any accumulation point of $\sequence{x}[k][\nset^*]$ belongs to $\mathcal{S} \cap K$ , and hence $\lim_{k \rightarrow \infty} d(x_k,  \mathcal{S}\cap K) = 0$.
\end{proof}

\section{Convergence of Projected Perturbed Approximate Discretisation}
\label{sec:convergencePPAD}
Let $K\in\rset^d$ be a compact convex set .
For any $x\in K$ the tangent and the normal cone are set valued maps defined as
\begin{align}
\label{eq:tangent_cone}
 \tangentcone{K}(x)&=\mathrm{cl}\{ d\in\rset^d\colon \text{$\exists$ $\epsilon>0$ such that $x+\epsilon d\in K$}\}\eqsp, \\
\label{eq:normal_cone}
 {\normalcone{K}}(x)&=\{ g\in\rset^d\colon \pscal{g}{z-x}\leq0,\text{ for all $z\in K$}\}\eqsp,
\end{align}
where $\mathrm{cl}(A)$ denotes closure of set $A$. Let $F$ be a convex compact set valued map defined on $\Xset\subseteq\rset^d$.  Consider the $K$-PPAD sequence $\sequence{x}[k][\nset]$ defined in \eqref{eq:ppad}: for $k \in\nset^*$,
\begin{equation}
\label{eq:PPAD-1}
x_k = \Pi_K \left( x_{k-1} + \gamma_k \{ v_k + e_k+r_k \} \right) \;,
\end{equation}
where $\sequence{e}[k][\nset]$ and $\sequence{r}[k][\nset]$ are defined in A\ref{ass:noise} and  $v_k \in F(y_k)$.

\begin{theorem}\label{thm:limit_projected}
Let $\sequence{x}[k][\nset^*]$ be the $K$-PPAD given in \eqref{eq:PPAD-1}.
Assume that conditions (A\ref{ass:open}--\ref{ass: limit_y_x}) hold and $\sup_k\vectornorm {e_k}<\infty$.
 Then
 \begin{enumerate}[(i)]
  \item The sequence of functions $\sequence{X}[k][\nset^*]$, defined in \eqref{eq:shift}, is precompact in the topology of compact convergence.
  \item  For every convergent subsequence of $\sequence{X}[k][\nset^*]$ the limit $X_\infty$ is a solution of a projected differential inclusion $\dot{x} \in \Pi_{{\tangentcone{K}}(x)}(F(x))$, where $\Pi_{{\tangentcone{K}}(x)}$ is the projection onto the closed convex cone ${\tangentcone{K}}(x)$.
 \end{enumerate}

\end{theorem}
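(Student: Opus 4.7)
The plan is to mirror the strategy of \Cref{thm:limit_compact}, first absorbing the projection correction into the interpolation. Define the projection residual
\[
p_k \eqdef x_k - \{x_{k-1}+\gamma_k(v_k+e_k+r_k)\}.
\]
The alternative projection characterization \eqref{def:proj_alternative} gives $-p_k\in\normalcone{K}(x_k)$, and nonexpansiveness of $\Pi_K$ together with $x_{k-1}\in K$ yields $\vectornorm{p_k}\le\gamma_k\vectornorm{v_k+e_k+r_k}$. Under (A\ref{ass:open}), (A\ref{ass: limit_y_x}) and $\sup_k\vectornorm{e_k}<\infty$, the rescaled residual $q_k\eqdef p_k/\gamma_k$ is uniformly bounded and lies in $-\normalcone{K}(x_k)$. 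Setting $\hat q(t)=\sum_k q_k\indi{\coint{t_{k-1},t_k}}(t)$, $\hat P_0(t)=\int_0^t\hat q(s)\rmd s$, and its shifts $\hat P_k$, I obtain the analogue of \eqref{eq:decompose_X},
\[
X_k(t)=x_0+\hat V_k(t)+\hat R_k(t)+\hat E_k(t)+\hat P_k(t).
\]

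For part (i), each of $\hat V_k,\hat R_k,\hat P_k$ is Lipschitz with a constant uniform in $k$, and pointwise equicontinuity of $\sequence{\hat E}[k][\nset^*]$ follows from Step~\ref{lem:equicontinuity} verbatim, relying only on convergence of $\sum_k\gamma_ke_k$. Since $\sequence{X}[k][\nset^*]\subset K$ is uniformly bounded, Arzel\`a--Ascoli (\Cref{thm:Arzela-Ascoli}) delivers precompactness in the topology of compact convergence.

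For part (ii), along a subsequence $X_{n_k}\to X_\infty$ uniformly on compacts, $X_\infty$ is Lipschitz and $K$-valued, hence absolutely continuous; by the tangent-cone characterization of absolutely continuous trajectories in the closed convex set $K$, $\dot X_\infty(t)\in\tangentcone{K}(X_\infty(t))$ a.e. Repeating the weak-$L^1$ and convex-combination argument of Step~\ref{lem:identify_weak_derivative}, with $\hat E$ and $\hat r$ both vanishing in the limit, a convex-combination subsequence of $\dot M_{n_k}=\hat v(\cdot+s_{n_k})+\hat r(\cdot+s_{n_k})+\hat q(\cdot+s_{n_k})$ converges a.e.\ to $\dot X_\infty$. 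Applying \Cref{lem:convex_lim} to $F$ (upper hemicontinuous, convex-compact-valued by (A\ref{ass:open})) and to $-\normalcone{K}$ (outer semicontinuous on $K$, closed-convex-valued) yields $f(t)\in F(X_\infty(t))$ and $n(t)\in\normalcone{K}(X_\infty(t))$ such that $\dot X_\infty(t)=f(t)-n(t)$ a.e.

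The remaining and hardest step is to identify this decomposition as $\dot X_\infty(t)=\Pi_{\tangentcone{K}(X_\infty(t))}(f(t))$ rather than an arbitrary element of $F(X_\infty(t))-\normalcone{K}(X_\infty(t))$. By the Moreau polar-cone theorem applied to the dual pair $(\tangentcone{K}(X_\infty(t)),\normalcone{K}(X_\infty(t)))$, this reduces to the orthogonality $\pscal{\dot X_\infty(t)}{n(t)}=0$. One inequality, $\pscal{\dot X_\infty(t)}{n(t)}\le 0$, is automatic from $\dot X_\infty(t)\in\tangentcone{K}(X_\infty(t))$ and the defining property of the normal cone. The matching lower bound $\pscal{\dot X_\infty(t)}{n(t)}\ge 0$ must be obtained by passing the discrete bilinear inequality $\vectornorm{q_k}^2\le\pscal{v_k+e_k+r_k}{-q_k}$, derived from \eqref{def:proj_alternative} with $z=x_{k-1}\in K$, to the limit along the same convex-combination subsequence. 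Bilinear inequalities do not survive Mazur-type convex combinations directly, so I would use the strong bound $\vectornorm{q_k}\le\vectornorm{v_k+e_k+r_k}$ and apply the Mazur/a.e.-subsequence argument to the joint vector $(h_k,q_k)$ so that the cross-terms arising in the bilinear expansion become controllable. This limit orthogonality is the principal technical difficulty of the proof.
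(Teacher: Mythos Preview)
Your decomposition and the precompactness argument are essentially identical to the paper's. The divergence is at the last step, which you flag as the ``principal technical difficulty'': establishing the orthogonality $\pscal{\dot X_\infty(t)}{n(t)}=0$ in order to identify $\dot X_\infty(t)=\Pi_{\tangentcone{K}(X_\infty(t))}(f(t))$. Your proposed route---passing the discrete bilinear inequality $\vectornorm{q_k}^2\le\pscal{v_k+e_k+r_k}{-q_k}$ through Mazur convex combinations---is, as you yourself note, not guaranteed to work, and you leave this unresolved.

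The paper avoids this entirely. It invokes \cite[Chapter~5, Section~6, Propositions~1 and~2]{aubin2012differential}, which says that an absolutely continuous $X_\infty$ with $X_\infty(t)\in K$ for all $t$ is a solution of $\dot x\in\Pi_{\tangentcone{K}(x)}(F(x))$ if and only if for a.e.\ $t$ there exists $w(t)\in F(X_\infty(t))$ with $\dot X_\infty(t)-w(t)\in-\normalcone{K}(X_\infty(t))$. So once you have $\dot X_\infty=f-n$ with $f\in F(X_\infty)$ and $n\in\normalcone{K}(X_\infty)$, you are done---no separate orthogonality argument is needed.

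The reason this characterization works (and the easy fix for your argument) is that the orthogonality you seek is automatic for trajectories, not a hard analytic fact. Since $X_\infty(t)\in K$ for all $t$, at a.e.\ $t$ both the forward and the backward difference quotients converge to $\dot X_\infty(t)$ while remaining in $K$; hence $\dot X_\infty(t)\in\tangentcone{K}(X_\infty(t))$ \emph{and} $-\dot X_\infty(t)\in\tangentcone{K}(X_\infty(t))$. Thus $\dot X_\infty(t)$ lies in the lineality space $\tangentcone{K}\cap(-\tangentcone{K})$, which is orthogonal to $\normalcone{K}$. This gives $\pscal{\dot X_\infty(t)}{n(t)}=0$ directly; combined with $\dot X_\infty\in\tangentcone{K}$ and $f-\dot X_\infty=n\in\normalcone{K}$, Moreau's theorem yields $\dot X_\infty=\Pi_{\tangentcone{K}}(f)$. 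You had one inequality; the other comes from the backward derivative, not from any discrete bilinear limit.

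One minor technical point: when you apply \Cref{lem:convex_lim} to $-\normalcone{K}$, note that $\normalcone{K}$ is closed-graph but not upper hemicontinuous (its values are unbounded cones). The paper handles this by observing that $\sup_k\vectornorm{q_k}<\infty$ and working with the truncated map $x\mapsto\normalcone{K}(x)\cap\ball(0,r)$, which is upper hemicontinuous by \cite[Chapter~1, Section~1, Theorem~1]{aubin2012differential}.
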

\begin{proof1}
As in the proof of \Cref{thm:limit_compact}, we  assume $\sum_{k=1}^{\infty} \gamma_k e_k = 0$.
We denote by
\begin{equation}\label{def:p}
p_k = \frac{x_k - x_{k-1}}{\gamma_k} - v_k - e_k - r_k\;,\end{equation}
and we define the functions for any $t\geq 0$
\begin{equation*}
\hat p (t) =\sum_{k=1}^\infty p_k \indi{\coint{t_{k-1},t_k}}(t)
\;,\quad \hat P_0(t)=\int_0^t\hat p(s) \rmd s\eqsp,
\end{equation*}
where $t_k$ is defined in \eqref{eq:def_t}.
Let $\sequence{s}[k][\nset^*]$ be an increasing sequence of positive real numbers, such that $\lim_{k \to \infty} s_k= \infty$.
For any $k\in\nset^*$ and $t\in\rset_+$ we define
\begin{equation}
\hat P_k(t) = \hat P_0(t + s_k) - \hat P_0(s_k)\eqsp.
\label{eq:def_P_k}
\end{equation}
\setcounter{St}{0}
The proof is divided into three steps.
\begin{step}
 The family of functions $\{(X_k, \hat P_k)\colon k\in\nset\}$, defined by
    \eqref{eq:shift} and \eqref{eq:def_P_k}, respectively,  is precompact in the topology of compact
    convergence.
\end{step}
\begin{proof}
The boundedness of $\sequence{X}[k][\nset]$ is trivial due to the fact, that $x_k \in K$ for all $k \geq 0$.
Using \eqref{def:p} and noting that, since projection is a contraction,
\begin{equation}
 \label{eq:bounded_x}
 \gamma_k^{-1}\vectornorm{x_k-x_{k-1}}\leq \vectornorm{v_k}+\vectornorm{r_k}+\vectornorm{e_k}
\end{equation}
we get that
$\sup_k\vectornorm{p_k}<\infty$.  Next, since
\[
 \hat P_k(t)=\int_{s_k}^{t+s_k}\hat p(s) \rmd s\eqsp,
\]
and for every $s\geq 0$, $\vectornorm{\hat p(s)}\leq \sup_k\vectornorm{p_k}$ we get $\vectornorm{\hat P_k(t)}\leq \sup_k\vectornorm{p_k} t$.

In addition since $\sup_k\vectornorm{p_k}<\infty$ and by \eqref{eq:bounded_x} $\sup_k\gamma_k^{-1}\vectornorm{x_k-x_{k-1}}<\infty$, for all $k\in\nset$ the functions
$\hat P_k$, and $X_k$  are Lipschitz continuous with a Lipschitz constant, which does not depend on $k$. Hence $\{(X_k,P_k),\eqsp k\in\nset^*\}$ is equicontinuous.

By Arzela-Ascoli theorem thee exists subsequence $\sequence{n}[k][\nset]$ such that $\subsequence{X}[n][k][\nset]$ converges
uniformly on compact subsets to a limit $X_\infty$ and $\subsequence{\hat P}[n][k][\nset]$ converges uniformly on compact subsets
to $P_\infty$.
\end{proof}
\begin{step}\label{step:step2}
 Any limit $(X_\infty, P_\infty)$ of converging subsequence is Lipschitz continuous and for almost every $t\geq 0$ there exists subsequence $\sequence{n}[k][\nset]$ such that
 \[
  X_\infty(t)=\lim_{k\to\infty} x_{n_k} =\lim_{k\to\infty} y_{n_k}\eqsp.
  \]
\end{step}
\begin{proof}
Since $\sequence{X}[k][\nset^*]$ and $\sequence{\hat P}[k][\nset^*]$ are Lipschitz continuous, so  the limits $X_\infty$, $P_\infty$ are also
Lipschitz continuous. Along the same lines as in \Cref{lem:limit_points} of proof of \Cref{thm:limit_compact}, i.e. from \eqref{eq:definition-X-n-k}, \eqref{eq:limeps} and \eqref{eq:y_minus_X},
we get that
\begin{equation}\label{eq:limy_proj}
\lim_{k \rightarrow \infty} y_{m(n_k,t)} = X_\infty(t)\eqsp,
\end{equation}
where $m(n_k,t)$ is defined in \eqref{def:mkt}.
\begin{step}
 Any limit $X_\infty$ of converging subsequence is a solution of the projected differential inclusion
   $\dot{x} \in \Pi_{{\tangentcone{K}}(x)}(F(x))$.
\end{step}
We use the following characterization of the solution of projected differential inclusion $\dot{x} \in \Pi_{{\tangentcone{K}}(x)}(F(x))$ given in \cite[Chapter 5, Section 6, Propositions 1 and 2]{aubin2012differential}:

\begin{enumerate}[(i)]
\item\label{item2} $X_\infty$ is absolutely continuous
\item\label{item1} For all $t\geq 0$ we have $X_\infty(t) \in K$.
\item\label{item3} For almost every $t\geq 0$ there exists $w(X_\infty(t)) \in F(X_\infty(t))$ such that,
\[ \dot X_\infty(t) - w(X_\infty(t))\in -{\normalcone{K}}(X_\infty(t)).\]
\end{enumerate}
The  condition \eqref{item2} is already proved in Step~\ref{step:step2} .
First we show \eqref{item1}.
Since $K$ is a convex and by construction for all $k\in\nset$ and $t\geq0$,  $X_k(t)$ is convex combination of two elements of $K$,
$X_k(t)\in K$. Because $K$ is compact and $X_\infty$ is the limit of convergent subsequence $\subsequence{X}[n][k][\nset]$ also
for all $t\geq 0$,
$X_\infty(t)\in K$.

It remains to show the condition \eqref{item3}. We choose
a compact interval $[a,b] \subset \rset_+$. Let $G$ and $Q$ denote  the weak derivatives of $X_\infty$
and $P_\infty$, respectively.
Recall that by assumption (A\ref{ass:noise}) the family of functions $\sequence{\hat{E}}[k][\nset]$ is uniformly bounded on $\rset$, and converges uniformly to $0$ on compact intervals.
Hence $\{(X_{n_k}-\hat E_{n_k}, \hat P_{n_k}),\eqsp k\in\nset\}$ converges uniformly on compact sets to the limit
$(X_\infty, P_\infty)$. Because the functions $\{(X_{n_k} - \hat E_{n_k}, \hat P_{n_k}),\eqsp k\in\nset\}$ are Lipschitz
continuous with the same constant, their weak derivatives $\{(G_{n_k},  Q_{n_k}),\eqsp k\in\nset\}$ are uniformly integrable. Hence, applying \Cref{lem:weak_conv} we get that
$\{(G_{n_k},  Q_{n_k}),\eqsp k\in\nset\}$ converges in the weak topology of $L_1([a,b])$ to $(G,Q)$.
By \Cref{lem:L1-conv-comb}, we conclude that there exists a convex combination subsequence
$\{(G^w_{n_k},  Q^w_{n_k}),\eqsp k\in\nset\}$ that
converges to $(G, Q)$ for almost every $t\in[a,b]$.

Let $t \in [a,b]$ be such, that $\lim_{k\to\infty}(G^w_{n_k}(t),  Q^w_{n_k}(t))=(G(t),  Q(t))$.
Since $Q_{n_k}(t)=p_{m(n_k,t)}$, where $m(n_k,t)$ is defined in \eqref{def:mkt}, $Q(t)=\lim_{k\to\infty}p^w_{m(n_k,t)}$.

Inequality \eqref{def:proj_alternative} shows that for any $l\in\nset$ and  $z\in K$,
$
\langle x_l - z, p_l \rangle \leq 0
$, or equivalently $-p_l\in {\normalcone{K}}(x_l)$, see definition \eqref{eq:normal_cone}.
By \cite[Chapter 5, Section 1, Theorem 1]{aubin2012differential} the normal cone has closed graph.
On the other hand, since $\sup_l\vectornorm{p_l}=r<\infty$ ,
by \cite[Chapter 1, Section 1, Theorem 1]{aubin2012differential} the map
$x\mapsto {\normalcone{K}}(x)\cap \ball(0,r)$ is upper hemicontinuous.
Therefore, since $\lim_{k\to\infty}x_{m(n_k,t)}=X_\infty(t)$, we can apply \Cref{lem:convex_lim} to
conclude that $Q(t)=\lim_{k\to\infty}Q^w_{n_k}(t)$ belongs to the minus normal
cone of $K$ at $X_\infty(t)$, \ie $-Q(t) \in {\normalcone{K}}(X_\infty(t))$.

On the other hand, $\lim_{k\to\infty}(G^w_{n_k}(t) - Q^w_{n_k}(t))=G(t)-Q(t)$ and since for any $m\in \nset$
\[
 \frac{x_{m+1} - x_{m}}{\gamma_{m+1}} - \left(\frac{x_{m+1} - x_{m}}{\gamma_{m+1}}
- v_{m} - e_{m} - r_{m} \right) - e_{m} = v_{m} + r_{m}\;
\]
we get $G^w_{n_k}(t) - Q^w_{n_k}(t) = v^w_{m(n_k,t)} + r^w_{m(n_k,t)}$.

For all $t\geq 0$, since $\lim_{k\to\infty}\vectornorm{r_{m(n_k,t)}}=0$, we have $\lim_{k\to\infty}\vectornorm{r^w_{m(n_k,t)}}=0$.
Therefore, for almost every $t\in[a,b]$, we get that $\lim_{k\to\infty}v^w_{m(n_k,t)}= G(t)-Q(t)$. On the other hand, for all
$t\in\rset_+$, $v_{m_(n_k,t)}\in F(y_{m(n_k,t)})$ and by \eqref{eq:limy_proj}, $\lim_{k\to\infty}y_{m(n_k,t)}=X_\infty(t)$.
Since $F$ is upper hemicontinuous closed convex we apply \Cref{lem:convex_lim} to show that $G(t) - Q(t)\in F(X_\infty(t))$.
We denote $w(X_\infty(t))=G(t)-Q(t)$. Hence for almost every $t\in[a,b]$ we have:
\begin{equation}
\label{eq:-nc}
\dot X_\infty(t) - w(X_\infty(t))= G(t)-(G(t)-Q(t))=Q(t) \in  - {\normalcone{K}}(X_\infty(t)).
 \end{equation}

 We therefore proved, that for almost all $t \in [a,b]\subset\rset_+$  there exists $w(\dot X_\infty(t)) \in F(X_\infty(t))$, such that
 the \eqref{eq:-nc} holds. Since there is countable cover of real line by compact intervals, by diagonal extraction argument we can define $w(\dot X_\infty(t))$, which
 satisfies \eqref{eq:-nc} for almost all $t \in \rset_+$.  Hence condition \eqref{item3} holds, and that concludes the proof.
$\ $
\end{proof}
\end{proof1}

\begin{theorem}\label{thm:convergence_proj}
   Let $K\subset\rset^d$ be a convex and compact, $\Xset\subseteq \rset^d$  be an open subset and  $F: \Xset\mapsto\mathcal{P}(\rset^d)$ be a set valued map,
$\sequence{x}[k][\nset^*]$ be a $K$-PPAD of $F$ with step sizes
  $\sequence{\gamma}[k][\nset^*]$ and perturbations
  $\sequence{\eta}[k][\nset^*]$. Assume that conditions
  (A\ref{ass:open}--\ref{ass: limit_y_x}) hold and $\sup_k\vectornorm {e_k}<\infty$..
Let $V : \Xset \rightarrow \rset$ be a locally Lipschitz, regular
function.  Suppose that there exists an upper semicontinuous function
$U : \Xset \rightarrow \rset$, such that for all $x\in K$
\[
\sup \mathcal{L}_{\Pi_{{\tangentcone{K}}}(F)}V(x) \leq U(x) \leq 0,
\]
and set $\mathcal{S} \eqdef \{x \in \Xset : U(x) = 0 \}$.
\begin{enumerate}[(i)]
\item The image by $V$ of the set of limiting points of
  $\sequence{x}[k][\nset^*]$ is a compact interval in
  $V(\mathcal{S} \cap K)$.
\item If $V(\mathcal{S} \cap K)$ has empty interior, then $\sequence{x}[k][\nset^*]$
converges to $K \cap \mathcal{S}$.
\end{enumerate}
\end{theorem}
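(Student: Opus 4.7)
The plan is to mirror the proof of \Cref{thm:convergence} step by step, replacing \Cref{thm:limit_compact} by \Cref{thm:limit_projected} and substituting the projected differential inclusion $\dot{x} \in \Pi_{\tangentcone{K}(x)}(F(x))$ for $\dot{x} \in F(x)$. The hypothesis that $\sequence{x}[k][\nset^*]$ lies in a compact subset of $\Xset$ comes for free: by \eqref{eq:PPAD-1} each iterate belongs to $K$, which we implicitly take to lie in $\Xset$ so that $V$ is defined along the trajectory. Since $V$ is locally Lipschitz (hence continuous) on $\Xset$ and $K$ is compact, $V$ is bounded on $K$, so $L \eqdef \liminf_{k\to\infty} V(x_k)$ is finite and is attained along a subsequence.

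First I will show $\mathcal{S} \cap K \neq \emptyset$ following the opening of the proof of \Cref{thm:convergence}. By \Cref{thm:limit_projected} there is an absolutely continuous $X_\infty:\coint{0,\infty} \to K$ solving the projected DI. Applying \Cref{lem:Lie-Lyapunov-regular} to the set-valued map $\Pi_{\tangentcone{K}}(F)$ (no regularity of that map is required by the lemma) yields $\tfrac{d}{dt} V(X_\infty(t)) \in \mathcal{L}_{\Pi_{\tangentcone{K}}(F)}V(X_\infty(t))$ for almost every $t$, hence $\tfrac{d}{dt}V(X_\infty(t)) \leq U(X_\infty(t)) \leq 0$. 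If $\mathcal{S}\cap K$ were empty, upper semicontinuity of $U$ together with compactness of $K$ would give $\sup_{x\in K} U(x) \leq -\delta < 0$, so integration would drive $V(X_\infty(t))\to -\infty$, contradicting the boundedness of $V$ on $K$.

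Next I transport the remaining three substeps of the proof of \Cref{thm:convergence}. The only quantitative input that needs rechecking is the speed bound: setting $v_{\max} \eqdef \sup_{x\in K}\sup_{z\in F(x)}\vectornorm{z}$, which is finite by local boundedness of $F$ and compactness of $K$, any solution of the projected DI satisfies $\vectornorm{\dot X_\infty(t)}\leq v_{\max}$ almost everywhere, because projection onto the closed convex cone $\tangentcone{K}(X_\infty(t))$ is non-expansive (the cone contains $0$). With this $v_{\max}$ the escape-time argument of \Cref{step:liminf_x} transfers verbatim: if an accumulation point $x_*$ realising $V(x_*)=L$ lay outside $\mathcal{S}\cap K$, a shifted subsequence together with \Cref{thm:limit_projected} would yield a projected-DI solution starting at $x_*$ that remains in $\closure{\ball}(x_*,r)\cap K$ for time $\Delta t = r/v_{\max}$, along which upper semicontinuity of $U$ forces a strict Lyapunov decrease contradicting the definition of $L$. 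The same machinery around a generic accumulation point $\hat x$ gives $V(\hat x)\in V(\mathcal{S}\cap K)$ as in \Cref{step:KS}, and the oscillation argument of \Cref{step:m} then shows that the image of the accumulation set under $V$ is an interval contained in $V(\mathcal{S}\cap K)$; this is assertion (i), compactness of the interval following from compactness of the accumulation set in $K$ and continuity of $V$. When $V(\mathcal{S}\cap K)$ has empty interior, that interval collapses to the point $\{L\}$, so every accumulation point lies in $\mathcal{S}\cap K$ and $\dist(x_k,\mathcal{S}\cap K)\to 0$, which is assertion (ii).

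The hard part is really only bookkeeping: one must verify that every quantitative estimate of \Cref{thm:convergence}, notably the uniform speed bound and the applicability of \Cref{lem:Lie-Lyapunov-regular}, survives the substitution $F \leadsto \Pi_{\tangentcone{K}}(F)$. Both do, the first by non-expansiveness of projection onto a closed convex cone containing the origin, and the second because \Cref{lem:Lie-Lyapunov-regular} is stated for an arbitrary set-valued map on an open set. No new analytic ingredient is required beyond the projected limit theorem \Cref{thm:limit_projected} already established, so the proof collapses to a careful cross-reference of the four substeps of \Cref{thm:convergence}.
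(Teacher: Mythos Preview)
Your proposal is correct and follows exactly the approach the paper takes: the paper's own proof is the single sentence ``The proof is along the same lines as the proof of \Cref{thm:convergence},'' and you have simply spelled out what that entails, namely replacing \Cref{thm:limit_compact} by \Cref{thm:limit_projected} and $F$ by $\Pi_{\tangentcone{K}}(F)$ throughout. Your additional remarks (the speed bound via non-expansiveness of projection onto a cone containing the origin, and the applicability of \Cref{lem:Lie-Lyapunov-regular} to an arbitrary set-valued map) are useful sanity checks that the paper leaves implicit.
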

\begin{proof}
 The proof is along the same lines as the proof of \Cref{thm:convergence}.
\end{proof}

\section{Applications}
\label{sec:applications}
In this section we apply the result from previous sections to projected \proxSGD and projected subgradient descent algorithm.
\subsection{Proximal stochastic gradient descent algorithm}
Stochastic proximal gradient is a natural extension of Proximal Gradient algorithm to the case where the gradient cannot be computed exactly and is therefore  affected by some errors. More specifically, we want to optimize a composite function of form
\[P(x) = f(x) + g(x)\;,\] where $f$ is a continuously differentiable function on some open set $\Xset \subseteq \rset^d$, and $g$ is locally Lipschitz,
bounded from below, regular function (see \Cref{def:regular-function}).

In many Machine Learning applications $f$ is  the empirical risk  of the model, and $g$ a sparsity inducing penalties like LASSO \cite{Tibshirani2011Regression}, MCP \cite{Zhang2010Nearly}, or SCAD  \cite{fan2001Variable}.
We consider applications in which the gradient  $\nabla f(x_k)$ cannot be computed but for which  noisy estimates $H_k$ of $\nabla f(x_k)$ are available. It is well known that, even in the noiseless case, additional  conditions are needed to guarantee that the successive iterates remains in compact set and further converge to stationary points. To overcome this issue we consider a projected version of algorithm. For a predefined compact set $K \subseteq \Xset$, we choose a sequence of step sizes $\sequence{\gamma}[k][\nset]$ satisfying (A\ref{ass:stepsize}) and a starting point $x_0 \in K$.

Denote by  $\cvxind_K$  the convex indicator function of set $K$ ($\cvxind(x)= 0$ if $x \in K$ and $\cvxind(x) =\infty$ otherwise) and by $\prox$ the proximal operator  (see \eqref{eq:proximal}).

We consider two versions of the projected \proxSGD\ algorithm which are given by
\begin{equation}\label{stochProxGrad_2}
x_k \in \prox_{\gamma_k (g + \cvxind_K)} (x_{k-1} - \gamma_k H_k)
\end{equation}
or
\begin{equation}\label{stochProxGrad_1}
x_k \in \Pi_K\left( \prox_{\gamma_k g} (x_{k-1} - \gamma_k H_k) \right)
\end{equation}
Those two approaches to projection are not equivalent, and depending on $g$ and $K$ one might be easier to compute than the other. Consider the following assumptions:
\begin{hypP}
\label{assP:field}
$\Xset \subset \rset^d$ be open set, $f: \Xset \to \rset$ be a continuously differentiable function, and $g: \Xset \to \rset $ be a
locally Lipschitz, bounded from below, regular function (see \Cref{def:regular-function}).
\end{hypP}

\begin{hypP}
\label{assP:stepsize}
The sequence of step sizes $\sequence{\gamma}[k][\nset^*]$ satisfies $\gamma_k > 0$, $\sum_{k=0}^{\infty} \gamma_k = \infty$, and $\lim_{k \rightarrow \infty} \gamma_k = 0$.
\end{hypP}
Denote by $\sequence{\delta}[k][\nset^*]$  the gradient perturbation
\begin{equation}
\label{eq:definition-delta}
\delta_k = H_{k} - \nabla f(x_{k-1}) \eqsp.
\end{equation}
\begin{hypP}
\label{assP:noise}
The sequence $\sequence{\delta}[k][\nset^*]$ can be decomposed as $\delta_k= e^\delta_k + r^\delta_k$ where $\sequence{e^\delta}[k][\nset^*]$ and $\sequence{r^\delta}[k][\nset^*]$ are two sequences satisfying $\lim_{k \to \infty} \vectornorm{r^\delta_k} = 0$ and the series $\sum_{k=1}^\infty \gamma_k e^\delta_k$ converges.
\end{hypP}
To illustrate our derivations, we consider now two possible choices of sparsity inducing penalties $g$.

\begin{example}[MCP penalty]
THe MCP penalty, introduced in \cite{Zhang2010Nearly} is given  for $x = (x_1,\dots,x_d)\in \rset^d$ by $g(x)= \sum_{i=1}^d p_{\lambda,\kappa}(x_i)$ where for $z \in \rset$, $\lambda > 0$ and $\kappa > 1$,
\[
p_{\lambda,\kappa}(z) =
\begin{cases}
\lambda |z| - z^2/ (2 \kappa) & \text{if $|z| \leq \kappa \lambda$} \\
1/(2 \kappa \lambda^2) & \text{otherwise}.
\end{cases}
\]
The function $g$ is Lipschitz on $\rset^d$ and nonnegative.
If $z \not= 0$, $p_{\lambda,\kappa}$ is differentiable at $z$ and hence $p_{\lambda,\kappa}$ is regular at $z$ by \cite[Proposition~2.3.6]{clarke1990optimization}. The function $z \mapsto \lambda |z|$ is convex and therefore regular on $\rset$, applying again \cite[Proposition~2.3.6]{clarke1990optimization}. The function $z \mapsto -z^2/(2 \kappa)$ is differentiable on $\rset$ and therefore regular. The sum of regular functions being regular by \cite[Proposition~2.3.6]{clarke1990optimization}, $z \mapsto \lambda |z| - z^2/ (2 \kappa)$ is regular on $\rset$. Therefore, $p_{\lambda,\kappa}$ is regular at $0$.
By \Cref{lem:regularity-sum}, the function $g$ is also regular and satisfy (P\ref{assP:field}).
The proximal operator for the MCP penalty $p_{\lambda,\kappa}$ is the function given (see for example \cite{Breheny2011Coordinate}), for all $\gamma \in \ooint{0,\kappa}$ by
\begin{equation*}
\prox_{\gamma p_{\lambda,\kappa}}(z)=
\begin{cases}
S(z,\gamma \lambda)/ (1 - \gamma/\kappa) & \text{for $|z| < \lambda \kappa$} \\
z   & \text{otherwise} \eqsp,
\end{cases}
\end{equation*}
where $S(z,\lambda)= \sign{z} \pospart{|z| -\lambda}$ is the soft thresholding operator (here $\pospart{x}= x \vee 0$ is the positive part of $x$). The proximal operator for $g$ is given by (see \cite[Section~2.1]{parikh2014proximal})
\[
\prox_{\gamma g}(x_1,\dots,x_d)= \left(\prox_{\gamma p_{\lambda,\kappa}}(x_1), \dots, \prox_{\gamma p_{\lambda,\kappa}}(x_d) \right) \eqsp.
\]
The SCAD penalty (\cite{fan2001Variable}) can be handled along the same lines (the expression for the proximal function can be found in \cite[Section~2]{Breheny2011Coordinate}).
\end{example}

\begin{proposition}
\label{prop:stochProxGrad_2-PPAD}
Assume (P\ref{assP:field})--(P\ref{assP:noise}) is satisfied.
 Then, the sequence $\sequence{x}[k][\nset]$ defined by \eqref{stochProxGrad_2} is $K$-PPAD (see \Cref{def_pad}-\eqref{eq:ppad}) with $F= -\nabla f - \bar{\partial} g$, noise sequence $\sequence{\eta}[k][\nset]$ where for each $k \in \nset^*$, $\eta_k= - \delta_k + \{ \nabla f(x_k)  - \nabla f(x_{k-1})\}$ and $y_k =x_k$. Moreover, Assumptions (A\ref{ass:open}--\ref{ass: limit_y_x}) are satisfied and, if $\sup_{k \in \nset^*} \vectornorm{\delta_k} < \infty$  then $\sup_{k \in \nset^*} \vectornorm{\eta_k} < \infty$.
\end{proposition}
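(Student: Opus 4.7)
My plan is to extract a first-order optimality condition from the proximal update, recast it in the $K$-PPAD form of \Cref{def_pad}, and then verify Assumptions (A\ref{ass:open})--(A\ref{ass: limit_y_x}). Since $g$ is regular by (P\ref{assP:field}) and $\cvxind_K$ is convex and therefore regular, the sum $g+\cvxind_K$ is regular by \cite[Proposition~2.3.6]{clarke1990optimization}. Applying Clarke's sum rule together with Fermat's rule to the minimization \eqref{eq:proximal} defining $x_k$ (using $\bar\partial\cvxind_K=\normalcone{K}$), I will obtain $u_k\in\bar\partial g(x_k)$ and $n_k\in\normalcone{K}(x_k)$ such that
\begin{equation*}
\gamma_k^{-1}(x_k-x_{k-1}) + H_k + u_k + n_k = 0.
\end{equation*}
Setting $v_k\eqdef -\nabla f(x_k)-u_k\in F(x_k)$ and substituting $H_k=\nabla f(x_{k-1})+\delta_k$ gives $v_k+\eta_k=-u_k-H_k$, so the identity rewrites as $x_k-x_{k-1}-\gamma_k(v_k+\eta_k)=-\gamma_k n_k$. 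For every $z\in K$ one has $\langle n_k,z-x_k\rangle\leq 0$, which is exactly the inequality in \eqref{def:proj_alternative}, so $x_k\in\Pi_K(x_{k-1}+\gamma_k\{F(x_k)+\eta_k\})$ with $y_k=x_k$.

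Next I will check the four assumptions. (A\ref{ass:open}) holds because $\nabla f$ is continuous and $\bar\partial g$ is upper hemicontinuous, convex-compact valued and locally bounded by \cite[Propositions~2.1.2 and 2.1.5]{clarke1990optimization}, all preserved under negation and sum. (A\ref{ass:stepsize}) is (P\ref{assP:stepsize}). For (A\ref{ass:noise}) I decompose $\eta_k=e_k+r_k$ with $e_k\eqdef -e^\delta_k$ and $r_k\eqdef -r^\delta_k+\{\nabla f(x_k)-\nabla f(x_{k-1})\}$; the series $\sum_k\gamma_k e_k$ converges by (P\ref{assP:noise}), and $r^\delta_k\to 0$. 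To push $r_k\to 0$ I will prove $\|x_k-x_{k-1}\|\to 0$: pairing the optimality identity with $x_{k-1}-x_k$ and using $\langle n_k,x_{k-1}-x_k\rangle\leq 0$ yields $\|x_k-x_{k-1}\|^2\leq\gamma_k\langle u_k+H_k,x_{k-1}-x_k\rangle$, whence
\begin{equation*}
\|x_k-x_{k-1}\|\leq \gamma_k(\|u_k\|+\|H_k\|).
\end{equation*}
Since $x_{k-1}\in K$ compact, $\|u_k\|$ is bounded by the Lipschitz constant of $g$ on a compact neighborhood of $K$, $\|\nabla f(x_{k-1})\|$ is bounded, and $\gamma_k\delta_k=\gamma_k e^\delta_k+\gamma_k r^\delta_k\to 0$ (both summands tend to zero by (P\ref{assP:noise})), so $\gamma_k(\|u_k\|+\|H_k\|)\to 0$. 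Uniform continuity of $\nabla f$ on $K$ then gives $\nabla f(x_k)-\nabla f(x_{k-1})\to 0$, as needed. (A\ref{ass: limit_y_x}) is immediate since $y_k=x_k$, and the final bound $\sup_k\|\eta_k\|<\infty$ under $\sup_k\|\delta_k\|<\infty$ follows from $\sup_K\|\nabla f\|<\infty$.

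The main technical obstacle is the decay of $r_k$ in (A\ref{ass:noise}): the correction $\nabla f(x_k)-\nabla f(x_{k-1})$ is the price paid for evaluating $F$ at the implicit point $y_k=x_k$ rather than at $x_{k-1}$, and its vanishing is not automatic but relies on combining the quasi-contractivity extracted from the optimality identity with the compactness of $K$ and the summability-type control on $\delta_k$ in order to force $\|x_k-x_{k-1}\|\to 0$ along the iterates.
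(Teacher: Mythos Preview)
Your proof follows the same route as the paper's and is correct in substance: you extract the same optimality inclusion, define $v_k$ and $\eta_k$ identically, verify the projection inequality \eqref{def:proj_alternative} via the normal cone, and prove $\|x_k-x_{k-1}\|\to 0$ by the same pairing argument to handle the correction term $\nabla f(x_k)-\nabla f(x_{k-1})$.

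There is one technical slip in your justification of the first-order condition. You invoke \cite[Proposition~2.3.6]{clarke1990optimization} to argue that $g+\cvxind_K$ is regular and hence $\bar\partial(g+\cvxind_K)=\bar\partial g+\normalcone{K}$, but that proposition requires all summands to be locally Lipschitz near the point in question, and $\cvxind_K$ is not locally Lipschitz at boundary points of $K$ (where the interesting case lies). The paper avoids this by treating the proximal step as a \emph{constrained} minimization of the locally Lipschitz function $y\mapsto g(y)+(2\gamma_k)^{-1}\|y-w_k\|^2$ over $K$ and applying \cite[corollary of Proposition~2.4.3, p.~52]{clarke1990optimization}, which directly yields $0\in\gamma_k^{-1}(x_k-w_k)+\bar\partial g(x_k)+\normalcone{K}(x_k)$. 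With this citation fix your argument and the paper's are essentially identical.
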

\begin{proof}
 We denote by $F$ the set-valued map $- \nabla f - \bar{\partial} g$. The Clarke gradient of a locally Lipschitz function is convex-compact valued and locally bounded (see \cite[Proposition~2.1.2]{clarke1990optimization}) and is upper hemi-continuous (see \cite[Proposition~2.1.5]{clarke1990optimization}. Since  $\nabla f$ is continuous, this implies that  $F$ satisfies (A\ref{ass:open}).
 By \cite[Corollary~2.3.2]{clarke1990optimization},  $\overline{\partial}(-f-g) = -\nabla f - \overline{\partial} g$.

 First, we need to show that $\sequence{x}[k][\nset]$ generated by \eqref{stochProxGrad_2} can be seen as PPAD. Suppose that $\sequence{x}[k][\nset]$ is generated by iteration (\ref{stochProxGrad_2}).  Denote by
 \begin{equation}
 \label{eq:wk}
 w_k = x_{k-1} - \gamma_k \{\nabla f(x_{k-1}) + \delta_k\}\;.\end{equation}
 With this notation \eqref{stochProxGrad_2} can be written as
 \[x_{k} = \prox_{\gamma_k (g + \cvxind_K)} (w_k)\;.\]
By \cite[corollary of Proposition~2.4.3, p.~52]{clarke1990optimization},
$0 \in \gamma_k^{-1} (x_k - w_k) + \bar{\partial} g(x_k) + {\normalcone{K}}(x_k)$.
Therefore there exists
\begin{equation}
\label{eq:definition-u-k}
u_k \in \bar\partial g(x_k)
\end{equation}
such that
\begin{equation}\label{eq:innormalcone}x_k - w_k + \gamma_k u_k \in  - \gamma_k {\normalcone{K}}(x_k)\;.\end{equation}
The normal cone to $K$ at $x_k$ consists of vectors $v_c$, such that for all $z \in K$ we have $\langle v_c , x_k - z \rangle \geq 0$. Since $\gamma_k {\normalcone{K}}(x_k) = {\normalcone{K}}(x_k)$ and using \eqref{eq:wk}, \eqref{eq:innormalcone}  implies that for all $z \in K$,
\begin{equation}\label{ineq:psgi_2}
\pscal{x_k - w_k - \gamma_k u_k}{x_k-z}=\pscal{x_k - x_{k-1} + \gamma_k(\nabla f(x_{k-1}) + \delta_k + u_k)}{x_k - z} \leq 0\;,
\end{equation}
where $u_k$ is defined in \eqref{eq:definition-u-k}.
Setting $v_k = - \nabla f(x_k) - u_k \in F(x_k)$ and $\eta_k = -\delta_k + (\nabla f(x_{k}) - \nabla f(x_{k-1}))$ we get
\[
\pscal{x_k - z}{x_k - x_{k-1} - \gamma_k (v_k + \eta_k)} \leq 0\;
\]
which is \eqref{def:proj_alternative}, but with noise sequence $\sequence{\eta}[k][\nset]$.

We finally have to check that (A\ref{ass:noise}). Since $\sequence{\delta}[k][\nset]$ satisfies condition (P\ref{assP:noise}) and
\begin{equation}
\label{eq:decomposition-eta-k}
\eta_k= \nabla f(x_k)-\nabla f(x_{k-1})-\delta_k= \nabla f(x_k)-\nabla f(x_{k-1})-r^\delta_k - e^\delta_k
\end{equation}
it is enough to show that $\lim_{k \rightarrow \infty} \vectornorm{\nabla f(x_{k-1}) - \nabla f(x_k)} = 0$. Plugging $z=x_{k-1}$ into \eqref{ineq:psgi_2}, we get that:
\[
\pscal{x_{k} - w_k + \gamma_k u_k}{x_k - x_{k-1}}\leq 0\;,
\]
Hence
\begin{align*}
\vectornorm{x_{k} - x_{k-1}}^2 &\leq \pscal{x_k-x_{k-1}-(x_{k} - w_k + \gamma_k u_k)}{x_k - x_{k-1}}\\
&=\pscal{- x_{k-1}+w_k-\gamma_k u_k}{x_k-x_{k-1}}\;,
\end{align*}
and using the Cauchy-Schwartz  and triangle inequalities and \eqref{eq:wk}, we obtain
\begin{equation}\label{eq:bound_x_norm}
\vectornorm{x_{k} - x_{k-1}} \leq \vectornorm{ -x_{k-1} + w_k - \gamma_k u_k}
\leq \gamma_k \vectornorm{\nabla f(x_{k-1})} + \gamma_k \vectornorm{\delta_k} + \gamma_k \vectornorm{ u_k} \;.
\end{equation}
Since $x_{k} \in K$ for any $k \in \nset$ and $\nabla f$ is continuous under (P\ref{assP:field}), $\sup_{k \in \nset} \vectornorm{\nabla f(x_{k-1})} < \infty$. On the other hand, under (P\ref{assP:noise}), $\lim_{k\to \infty} \gamma_k \vectornorm{\delta_k}=0$. Finally, using again (P\ref{assP:field}) and $x_k \in K$ for any $k \in \nset^*$,  \cite[Proposition~2.1.2]{clarke1990optimization} shows that $\sup_{k \in \nset^*} \vectornorm{u_k} < \infty$. Therefore, $\lim_{k \to \infty} \vectornorm{x_k-x_{k-1}}= 0$ under (P\ref{assP:stepsize}) and, the continuity of $\nabla f$ combined with the decomposition \eqref{eq:decomposition-eta-k} shows that (A\ref{ass:noise}) holds.
\end{proof}

\begin{proposition}
\label{prop:stochProxGrad_1-PPAD}
Assume (P\ref{assP:field})--(P\ref{assP:noise}) is satisfied and that $g: \Xset \to \rset$ is Lipschitz.
 Then, the sequence $\sequence{x}[k][\nset]$ defined by \eqref{stochProxGrad_1} is $K$-PPAD with $F= -\nabla f - \bar{\partial} g$ (where $\bar{\partial}$ denotes the Clarke generalized gradient) and noise $\sequence{\eta}[k][\nset]$ where for each $k \in \nset^*$, $\eta_k= - \delta_k + \{ \nabla f(x_k)  - \nabla f(x_{k-1})\}$. Moreover, Assumptions (A\ref{ass:open}--\ref{ass: limit_y_x}) are satisfied and, if $\sup_{k \in \nset^*} \vectornorm{\delta_k} < \infty$  then $\sup_{k \in \nset^*} \vectornorm{\eta_k} < \infty$.
\end{proposition}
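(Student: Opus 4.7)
The proof follows the blueprint of \Cref{prop:stochProxGrad_2-PPAD}, but now the proximal step and the projection step must be treated separately. Write $z_k \in \prox_{\gamma_k g}(x_{k-1} - \gamma_k H_k)$ so that $x_k = \Pi_K(z_k)$. The Clarke optimality condition for the proximal subproblem (\cite[corollary of Proposition~2.3.2]{clarke1990optimization}) yields $u_k^0 \in \bar{\partial}g(z_k)$ with
\[
z_k = x_{k-1} - \gamma_k(\nabla f(x_{k-1}) + \delta_k + u_k^0),
\]
and the variational characterization of $x_k = \Pi_K(z_k)$ gives, for every $z \in K$,
\[
\pscal{x_k - z}{x_k - x_{k-1} + \gamma_k(\nabla f(x_{k-1}) + \delta_k + u_k^0)} \leq 0.
\]

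The plan is to identify this inequality as the $K$-PPAD condition \eqref{def:proj_alternative} with $y_k = x_k$. Pick $u_k \in \bar{\partial}g(x_k)$ closest to $u_k^0$, set $v_k = -\nabla f(x_k) - u_k \in F(x_k)$, and define
\[
\eta_k = -\delta_k + (\nabla f(x_k) - \nabla f(x_{k-1})) + (u_k - u_k^0).
\]
A direct algebraic check gives $v_k + \eta_k = -\nabla f(x_{k-1}) - \delta_k - u_k^0$, so the displayed inequality is exactly \eqref{def:proj_alternative}. The extra term $u_k - u_k^0$ (absent in \Cref{prop:stochProxGrad_2-PPAD}) arises because the prox delivers a subgradient at $z_k$ rather than at $x_k$; this is the main technical issue, and it is handled by the new hypothesis that $g$ is Lipschitz.

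The argument goes as follows. Since $K$ is compact and $\nabla f$ is continuous, $\sup_{k}\vectornorm{\nabla f(x_{k-1})}<\infty$; since $g$ is Lipschitz, $\vectornorm{u_k^0}\leq\lip{g}$; and by (P\ref{assP:noise}) the convergence of $\sum_k \gamma_k e^\delta_k$ forces $\gamma_k e^\delta_k \to 0$, while $r^\delta_k \to 0$, so $\gamma_k \delta_k \to 0$. Combined, these yield $\vectornorm{z_k - x_{k-1}} \to 0$. Using $x_{k-1} \in K$ and contractivity of $\Pi_K$ one has $\vectornorm{x_k - z_k} \leq \vectornorm{z_k - x_{k-1}} \to 0$, hence $\vectornorm{x_k - x_{k-1}} \to 0$, and upper hemicontinuity of $\bar{\partial}g$ then gives $\vectornorm{u_k - u_k^0} \to 0$.

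The assumptions of the PPAD framework are now routine. (A\ref{ass:open}) for $F = -\nabla f - \bar{\partial}g$ is established as in \Cref{prop:stochProxGrad_2-PPAD}; (A\ref{ass:stepsize}) is (P\ref{assP:stepsize}); (A\ref{ass: limit_y_x}) is trivial since $y_k = x_k$. For (A\ref{ass:noise}) one decomposes $\eta_k = e_k + r_k$ with $e_k = -e^\delta_k$ and $r_k = -r^\delta_k + (\nabla f(x_k) - \nabla f(x_{k-1})) + (u_k - u_k^0)$: the series $\sum_k \gamma_k e_k$ converges by (P\ref{assP:noise}), and $r_k \to 0$ by the three limits established above together with continuity of $\nabla f$. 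Finally, when $\sup_k \vectornorm{\delta_k} < \infty$, boundedness of $\eta_k$ follows because $\nabla f$ is bounded on $K$ and $\vectornorm{u_k}, \vectornorm{u_k^0} \leq \lip{g}$. The hard part is therefore the first step: absorbing the subgradient discrepancy $u_k - u_k^0$ into the vanishing residual, which is exactly what the Lipschitz hypothesis on $g$ enables.
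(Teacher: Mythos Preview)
Your approach diverges from the paper's in a way that creates a real gap. The paper does \emph{not} take $y_k = x_k$ in the PPAD structure; it takes the approximation sequence $y_k$ to be the prox output itself (your $z_k$). With that choice the optimality condition for the proximal step already delivers $u_k \in \bar\partial g(y_k)$, so $v_k = -\nabla f(y_k) - u_k \in F(y_k)$ directly, the noise is $\eta_k = -\delta_k + \nabla f(y_k) - \nabla f(x_{k-1})$, and no subgradient discrepancy term ever appears. The price is that (A\ref{ass: limit_y_x}) is no longer trivial: one must check $\vectornorm{x_k - y_k} \to 0$, which follows from $\vectornorm{y_k - x_{k-1}} \to 0$ (bounded $\nabla f$ on $K$, $\vectornorm{u_k}\leq\lip{g}$, $\gamma_k\delta_k\to 0$) together with $\vectornorm{x_k - y_k} = \vectornorm{\Pi_K(y_k) - \Pi_K(x_{k-1})} \leq \vectornorm{y_k - x_{k-1}}$. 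This is precisely the use of the flexibility built into \Cref{def_pad}.

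Your choice $y_k = x_k$ forces you to manufacture a subgradient of $g$ at $x_k$, and the resulting term $u_k - u_k^0$ is where the argument breaks. Upper hemicontinuity of $\bar\partial g$ does \emph{not} imply $\dist(u_k^0,\bar\partial g(x_k)) \to 0$ when both $z_k$ and $x_k$ vary with $\vectornorm{z_k - x_k}\to 0$: it only says that cluster points of $u_k^0$ lie in $\bar\partial g$ of the common limit, not that $u_k^0$ is close to $\bar\partial g(x_k)$ for each $k$. Concretely, take $g(t)=|t|$ on $\rset$: whenever $z_k$ and $x_k$ lie on opposite sides of $0$ one has $\bar\partial g(z_k)=\{-1\}$, $\bar\partial g(x_k)=\{+1\}$, hence $\vectornorm{u_k - u_k^0}=2$ regardless of how small $\vectornorm{z_k - x_k}$ is. This configuration can occur in your setting---for instance when a kink of $g$ meets $\partial K$ and the projection carries $z_k$ across it (e.g.\ $K$ the unit disk in $\rset^2$, $g(x)=|x_1-1|$, $z_k$ just outside $K$ near $(1,0)$ with first coordinate $>1$; then $x_k=z_k/\vectornorm{z_k}$ has first coordinate $<1$). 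Nothing in your argument excludes this, so the step ``upper hemicontinuity gives $\vectornorm{u_k-u_k^0}\to 0$'' is unjustified. The fix is simply to put the prox output in the slot $y_k$ of the PPAD definition rather than absorbing the mismatch into the noise.
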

\begin{proof}
Denote by
\begin{equation*}
y_k=\prox_{\gamma_k g} \left( x_{k-1} - \gamma_k \nabla f(x_{k-1}) - \gamma_k \delta_k \right)\;.
\end{equation*}
Using the definitions of proximal and \cite[Proposition~2.3.2]{clarke1990optimization}, for each $k \in \nset^*$ there exists $u_{k} \in \bar{\partial} g(y_k)$ such that
\[
y_k= x_{k-1} - \gamma_k \nabla f(x_{k-1}) - \gamma_k \delta_k -\gamma_ku_k\;.
\]
Denoting by $v_k=-\nabla f(y_k) -u_k$ and by $\eta_k= -\delta_k-\nabla f(x_{k-1})+\nabla f(y_k)$ we get that
\[
y_k=x_{k-1}+\gamma_k (v_k+\eta_k)\;.
\]
We now chack (A\ref{ass:noise}). The perturbation $\eta_k$ may be decomposed as $\eta_k= e_k + r_k$ where
$e_k= -e_k^\delta$ and $r_k= -r_k^\delta + \nabla f(y_k) - \nabla f (x_{k-1})$.
Since $\nabla f$ is continuous, Assumption~(A\ref{ass:noise}) is satisfied if
\begin{equation*}
\lim_{k\to\infty} \vectornorm{y_k-x_{k-1}}=0\;.
\end{equation*}
Note that, since $g$ is Lipschitz, \cite[Proposition~2.1.2-(a)]{clarke1990optimization} shows that for all $u \in \bar\partial{g}(y)$, $\vectornorm{u} \leq \lip{g} < \infty$.
Boundedness of $\nabla f$ on the set $K$ and assumptions (P\ref{assP:stepsize}) and (P\ref{assP:noise}) implies that
\[
\limsup_{k \to \infty} \vectornorm{x_{k-1} - y_k} \leq
\limsup_{k \to \infty}\gamma_k \vectornorm{\nabla f(x_{k-1})+\delta_k} +
\limsup_{k \to \infty} \gamma_k \vectornorm{u_k} = 0\;.
\]
Because $x_k$ is a projection of $y_k$ on the set $K$ we have
\[
\vectornorm{y_k -x_{k-1}} \geq \vectornorm{y_k -x_{k}}\;,
\]
showing that (A\ref{ass: limit_y_x}) is satisfied.
\end{proof}

Applying our results from \Cref{sec:convergencePPAD}, we now show that  both versions of the projected proximal gradient algorithms converge.
\begin{theorem} \label{thm:stoch-prox-grad-conv}
Assume (P\ref{assP:field}--\ref{assP:noise}) and denote
\begin{equation}
\label{eq:definition-stationary-point}
\mathcal{S} \eqdef \{ x \in K : 0 \in \nabla f(x) + \bar{\partial} g(x) - {\normalcone{K}}(x) \}\;,
\end{equation}
where $\bar{\partial} g$ is the Clarke gradient of $g$ (see \Cref{def:clarke-gradient}), and ${\normalcone{K}}(x)$ is the normal cone to set $K$ at $x \in K$ (see \eqref{eq:normal_cone}). Suppose $(f+g)(\mathcal{S})$ has empty interior and $\sup_{k \in \nset^*} \vectornorm{\delta_k} < \infty$, where $\delta_k$ is defined in \eqref{eq:definition-delta}.
\begin{enumerate}[(i)]
\item The sequence $\sequence{x}[k][\nset]$ generated by iterations \eqref{stochProxGrad_2} converges  to the $\mathcal{S}$.
\item Assume in addition that $g$ is Lipschitz. Then the sequence $\sequence{x}[k][\nset]$ generated by iterations \eqref{stochProxGrad_1} converges  to the $\mathcal{S}$.
\end{enumerate}
\end{theorem}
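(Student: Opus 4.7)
The plan is to obtain both statements as direct applications of \Cref{thm:convergence_proj} with the Lyapunov function $V = f+g$. By \Cref{prop:stochProxGrad_2-PPAD} the iterates produced by \eqref{stochProxGrad_2} are a $K$-PPAD for the set-valued field $F = -\nabla f - \bar\partial g$, and by \Cref{prop:stochProxGrad_1-PPAD} the same holds, under the additional Lipschitz assumption on $g$, for \eqref{stochProxGrad_1}; both propositions also verify (A\ref{ass:open})--(A\ref{ass: limit_y_x}). The extra requirement $\sup_k \vectornorm{e_k} < \infty$ of \Cref{thm:convergence_proj} follows by choosing the decomposition $\delta_k = e^\delta_k + r^\delta_k$ in (P\ref{assP:noise}) so that $r^\delta_k \to 0$; combined with $\sup_k \vectornorm{\delta_k} < \infty$, this gives $\sup_k \vectornorm{e^\delta_k} < \infty$, and hence $\sup_k \vectornorm{e_k} < \infty$ via the decomposition of $\eta_k$ computed in those propositions.

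Under (P\ref{assP:field}), $V$ is locally Lipschitz and, by \cite[Proposition~2.3.6]{clarke1990optimization}, regular on $\Xset$ as a sum of regular functions. Moreover, \cite[Corollary~2.3.2]{clarke1990optimization} yields $\bar\partial V(x) = \nabla f(x) + \bar\partial g(x) = -F(x)$, and $V$ is continuous on the compact set $K$, so bounded on $K$.

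The central task is to construct an upper semicontinuous $U : K \to \rset$ such that $\sup \mathcal{L}_{\Pi_{\tangentcone{K}}(F)} V(x) \leq U(x) \leq 0$ on $K$ and $\{x \in K : U(x) = 0\} = \mathcal{S}$. I define
\[
U(x) := -\inf\bigl\{\vectornorm{v}^2 : v \in \Pi_{\tangentcone{K}(x)}(F(x))\bigr\}, \quad x \in K.
\]
To bound the Lie derivative, pick $a \in \mathcal{L}_{\Pi_{\tangentcone{K}}(F)} V(x)$: by definition there is $v \in \Pi_{\tangentcone{K}(x)}(F(x))$ with $\pscal{v}{\xi} = a$ for every $\xi \in \bar\partial V(x)$. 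Since $K$ is convex, $\tangentcone{K}(x)$ and $\normalcone{K}(x)$ are mutually polar closed convex cones, and Moreau's decomposition writes $v = \Pi_{\tangentcone{K}(x)}(w) = w - \Pi_{\normalcone{K}(x)}(w)$ for some $w \in F(x)$, with $\pscal{v}{\Pi_{\normalcone{K}(x)}(w)} = 0$. Choosing the admissible $\xi = -w \in -F(x) = \bar\partial V(x)$ yields $a = \pscal{v}{-w} = -\vectornorm{v}^2$, and therefore $\sup \mathcal{L}_{\Pi_{\tangentcone{K}}(F)} V(x) \leq U(x) \leq 0$. The zero set $\{U = 0\}$ consists of those $x \in K$ with $0 \in \Pi_{\tangentcone{K}(x)}(F(x))$, equivalently $F(x) \cap \normalcone{K}(x) \neq \emptyset$, which (up to the sign convention on the normal cone) coincides with $\mathcal{S}$ as defined in \eqref{eq:definition-stationary-point}. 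Upper semicontinuity of $U$ follows from closedness of the graph of the normal cone map on the convex set $K$ (\cite[Chapter~5]{aubin2012differential}), continuity of the projection onto a closed convex cone jointly in the point and the cone, and the upper hemicontinuity together with compact-valuedness of $F$.

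Since $V(\mathcal{S} \cap K) = (f+g)(\mathcal{S})$ has empty interior by hypothesis, \Cref{thm:convergence_proj}(ii) delivers convergence of $\sequence{x}[k][\nset]$ to $K \cap \mathcal{S}$, establishing (i) and (ii) (the latter under the Lipschitz assumption on $g$). The main obstacle is the third paragraph: identifying the zero set of $U$ with $\mathcal{S}$ requires careful bookkeeping of the sign conventions for the tangent and normal cones of $K$, and establishing upper semicontinuity of $U$ hinges on how $\Pi_{\tangentcone{K}(x)}(F(x))$ varies with $x$ as both the convex cone and the upper hemicontinuous image vary.
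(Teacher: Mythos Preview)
Your strategy is the same as the paper's: invoke \Cref{prop:stochProxGrad_2-PPAD}/\Cref{prop:stochProxGrad_1-PPAD}, take $V=f+g$, bound the set-valued Lie derivative by $U(x)=-\min_{w\in F(x)}\vectornorm{\Pi_{\tangentcone{K}(x)}(w)}^2$, and apply \Cref{thm:convergence_proj}. The Moreau-decomposition computation and the identification of $\{U=0\}$ with $\mathcal S$ are exactly what the paper does (and yes, the sign discrepancy you flag is in the paper's statement; your computation giving $F(x)\cap\normalcone{K}(x)\ne\emptyset$, i.e.\ $0\in\nabla f(x)+\bar\partial g(x)+\normalcone{K}(x)$, is the right answer).

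The one place you diverge is the upper semicontinuity of $U$, and there your argument is looser than it needs to be. You appeal to ``continuity of the projection onto a closed convex cone jointly in the point and the cone''; this is not standard and would require specifying a mode of set convergence for $\tangentcone{K}(x)$ and proving a Mosco-type stability result. The paper avoids this entirely by first rewriting, via Moreau's identity (their reference is \cite[Proposition~0.6.4]{aubin2012differential}),
\[
U(x)=-\min_{v\in F(x)-\normalcone{K}(x)}\vectornorm{v}^2,
\]
then observing that $x\mapsto F(x)-\normalcone{K}(x)$ has closed graph (closed graph of $\normalcone{K}$ plus upper hemicontinuity and compact-convex values of $F$), and finally applying \Cref{lem:lower-semicont-inf}. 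This reformulation eliminates the tangent-cone projection from the analysis, so no joint continuity of projections is needed. If you adopt that rewriting, your sketch becomes a complete proof.
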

\begin{proof}
Using \Cref{prop:stochProxGrad_2-PPAD,prop:stochProxGrad_1-PPAD}, the sequences $\sequence{x}[k][\nset]$ defined by \eqref{stochProxGrad_2} and \eqref{stochProxGrad_1} are $K$-PPAD satisfying (A\ref{ass:open}--\ref{ass: limit_y_x}) with
\begin{equation*}
F= - \nabla f - \bar \partial g \eqsp.
\end{equation*}
To apply \Cref{thm:convergence_proj}, we  show that  $V= f+g$ is a Lyapunov function for
$F_K(x) \eqdef \Pi_{\tangentcone{K}(x)}(F(x))$, $x \in \Xset$.
Under the stated assumptions, $V$ is locally Lipschitz regular and by \cite[Corollary~2 of Proposition~2.3.3]{clarke1990optimization}
\begin{equation*}
\bar\partial V(x)= \nabla f(x) + \bar\partial g(x) \eqsp, \quad x \in \Xset \eqsp.
\end{equation*}
We now  compute the Lie derivative of $V$ with respect to the field $F_K$ see \Cref{def:Lie-derivative}). Let $x \in K$ . Suppose that there exists $a \in \mathcal{L}_{F_K}V(x)$. Then there exists $v \in \Pi_{{\tangentcone{K}}(x)}(F(x))$, such that for all $w \in \bar\partial V(x)$, $\pscal{v}{w} = a$. Let $u \in F(x)$ be such that $\Pi_{{\tangentcone{K}}(x)}(u) = v$. Note that  $-u \in \bar\partial V(x)$, which implies $\pscal{\Pi_{{\tangentcone{K}}(x)} (u)}{-u} = a$ and
\[
a = - \vectornorm{\Pi_{{\tangentcone{K}}(x)} (u)}^2 + \pscal{\Pi_{{\tangentcone{K}}(x)} (u)}{\Pi_{{\tangentcone{K}}(x)}(u)  -u} \eqsp.
\]
Applying \cite[Proposition~0.6.2]{aubin2012differential}, we get that $\pscal{\Pi_{{\tangentcone{K}}(x)} (u)}{ \Pi_{{\tangentcone{K}}(x)}(u)  -u} = 0$. Therefore, if $a \in \mathcal{L}_{F_K}V(x)$, then $a = - \vectornorm{\Pi_{{\tangentcone{K}}(x)} (u)}^2$ for some $u \in F(x)$. Hence, for any $x \in K$, $\mathcal{L}_{F_K}V(x)$  is either empty, or contains only non-positive elements.

For any $x \in \Xset$, \cite[Proposition~2.1.2]{clarke1990optimization} shows that $F(x)$ is non-empty, convex and compact. Define   $U$ for any $x \in K$ as follows
\[
U(x) = - \min_{u \in F(x)} \vectornorm{\Pi_{{\tangentcone{K}}(x)}(u)}^2\;.
\]
By \cite[Proposition~0.6.4]{aubin2012differential} we get for any $x \in K$,
\begin{equation}
\label{eq:representation_U}
U(x) = - \min_{v \in F(x) - {\normalcone{K}}(x)} \vectornorm{v}^2\;.
\end{equation}
Under (A\ref{ass:open}), $F$ is upper hemicontinuous compact-convex valued. On the other hand, by \cite[Chapter 5, Section 1, Theorem 1]{aubin2012differential}, ${\normalcone{K}}$ has closed graph.
Hence, the map $F-N_K$ has closed graph by \cite[Proposition~1.1.2, p.~41]{aubin2012differential}.
By \Cref{lem:lower-semicont-inf}  the function $U$ is upper semicontinuous. Thus we have an upper semicontinuous function $U$, such that for all $x\in K$:
\[
\sup \mathcal{L}_{F_K}V(x) \leq U(x) \leq 0.
\]
Using  \eqref{eq:representation_U}, we get that
\[
\{x\in K\colon U(x) = 0\}=\{x\in K\colon0 \in -\nabla f(x) - \bar\partial g(x) - {\normalcone{K}}(x)\} = \mathcal{S}
\eqsp,
\]
which concludes the proof
\end{proof}

\subsection{Online proximal stochastic gradient descent algorithm}\label{subsec:online}
Let $\Xset$ be an open subset of $\rset^d$ and $K \subset \rset^d$ be a nonempty bounded closed convex set. We  first consider the following composite minimization problem
\begin{equation}
\label{eq:definition-f-online}
\min_{x \in K} \left\{ f(x) + g(x)  \right\}
\end{equation}
where  $f: \rset^d \to \rset$ is continuously differentiable on a neighborhood of $K$ and $g: \Xset \to \rset^d$ is locally Lipschitz, bounded from below, regular function.

In the online learning case, the gradient $\nabla f$ of the function $f$ cannot be computed but that a noisy version of the gradient is available  To make the discussion simple, we assume that
\begin{enumerate}[(i)]
\item an \iid\ sequence $\sequence{\xi}[k][\nset]$ with common distribution $\pi$ on a measurable space $(\Zset,\Zsigma)$
\item there is an oracle which for a given input point $(x,\xi) \in \Xset \times \Zset$ returns a stochastic gradient, a vector $H(x,\xi)$ such that $\PE[\Phi(x,\xi)]= \int_\Zset \Phi(x,z) \pi(\rmd z)$ is well defined and  is equal to $\nabla f(x)$.
\end{enumerate}
We are considering the two following stochastic approximation procedures
\begin{equation}\label{eq:stochProxGrad_sto-2}
x_k = \prox_{\gamma_k (g + \cvxind_K)} \{x_{k-1} - \gamma_k \Phi(x_{k-1},\xi_k)\}
\end{equation}
or
\begin{equation}\label{eq:stochProxGrad_sto-1}
x_k = \Pi_K\left( \prox_{\gamma_k g} \{x_{k-1} - \gamma_k \Phi(x_{k-1},\xi_k)\}) \right) \eqsp.
\end{equation}
where $\sequence{\gamma}[k][\nset]$ is a sequence of step sizes satisfying $\sum_{k=1}^\infty \gamma_k^{1+\epsilon} < \infty$ for some $\epsilon > 0$. Clearly such $\sequence{\gamma}[k][\nset]$ satisfies (P\ref{assP:stepsize}).
Assume for simplicity that the essential supremum of $\sup_{x \in K} \vectornorm{\Phi(x,\cdot)}$ is finite
\begin{equation}
\label{eq:bounded-noise}
\inf \set{a \in \rset}{\pi\left( \sup_{x \in K}\vectornorm{\Phi(x,z)} > a\right)=0} < \infty \eqsp.
\end{equation}
Setting, for all $k\in\nset^*$, $\delta_k= \Phi(x_{k-1},\xi_k)-\nabla f(x_{k-1})$, we get that $\sup_k \vectornorm{\delta_k}<\infty$ $\PP$-almost surely and since $\sequence{\delta}[k][\nset]$ is a bounded martingale increment sequence  the conditional version of the Kolmogorov three-series theorem shows that (see \cite[Theorem~2.16]{hall:heyde:1980} shows that $\sum_{k=1}^\infty\gamma_k\delta_k<\infty$ almost surely. Therefore (P\ref{assP:noise}) holds. Hence, if $(f + g)(\mathcal{S})$ has an empty interior (where $\mathcal{S}$ is the set of stationary point defined in \eqref{eq:definition-stationary-point}), \Cref{thm:stoch-prox-grad-conv} implies that $\sequence{x}[k][\nset]$ generated by \eqref{eq:stochProxGrad_sto-2} almost surely converge to the set of stationary points \eqref{eq:definition-stationary-point}
If in addition the function $g$ is Lipschitz, then the sequence $\sequence{x}[k][\nset^*]$ generated by \eqref{eq:stochProxGrad_sto-1} also converges almost surely to $\mathcal{S}$.
\subsection{Monte Carlo Proximal stochastic gradient descent algorithm}\label{subsec:montecarlo}
In this section, we still consider the composite minimization problem \eqref{eq:definition-f-online}. We assume that $f$ is continuously differentiable and that for all $x \in K$ $\nabla f(x)$ satisfies
  \begin{equation}
    \label{eq:def_H_Markov}
    \nabla f(x)=\int_{\Zset} \Phi(x,z)\pi_x(\rmd z) \eqsp,
  \end{equation}
  for some probability measure $\pi_x$ and an integrable function $(x,z) \mapsto \Phi(x,z)$
  from $K \times \Zset$ to $\rset^d$. Note the dependence of $\pi_x$ on $x$ which makes the situation trikier. To approximate $\nabla f(x)$, several options are
available. Of course, when the dimension of the state space $\Zset$ is small to
moderate, it is always possible to perform a numerical integration using either
Gaussian quadratures or low-discrepancy sequences.  Such approximations necessarily introduce some
bias, which might be difficult to control. In addition, these techniques are
not applicable when the dimension of $\Zset$ becomes large. In
this paper, we rather consider some form of Monte Carlo approximation.

When sampling directly $\pi_x$ is doable, then an obvious choice is to
use a naive Monte Carlo estimator which amounts to sample a batch $\{
\Xb{k}{j}, 1 \leq j \leq m \}$ independently of the past values of the
parameters $\{x_j, j \leq k-1\}$ and of the past draws \ie\  independently
of the $\sigma$-algebra
\begin{equation}
\label{eq:definition-Fn}
\mcf_n \eqdef \sigma(x_0, \Xb{k}{j}, 0 \leq k \leq n, 0 \leq j \leq m) \eqsp.
\end{equation}
We then form
\[
Y_{k}= m^{-1} \sum_{j=0}^{m-1} \Phi(x_{k-1},\Xb{k}{j}) \eqsp.
\]
Conditionally to $\mcf_{k-1}$, $Y_k$ is an unbiased estimator of $\nabla
f(x_{k-1})$.

When direct sampling from $\pi_x$ is not an option, we may still construct a Markov kernel $P_x$ with
invariant distribution $\pi_x$. Monte Carlo Markov Chains (MCMC) provide a
set of principled tools to sample from complex distributions over large
dimensional spaces.  In such case, conditional to the past, $\{ \Xb{k}{j}, 1
\leq j \leq m \}$ is a realization of a Markov chain with transition
kernel $M_{x_{k-1}}$ and started from $\Xb{k-1}{m}$ (the last sample
draws in the previous minibatch).

Recall that a Markov kernel $P$ is an application on $\Zset
  \times \Zsigma$, taking values in $\ccint{0,1}$ such that for any $z
  \in \Zset$, $P(z,\cdot)$ is a probability measure on $\Zsigma$; and
  for any $A \in \Zsigma$, $z \mapsto P(z,A)$ is measurable.
  Furthermore, if $P$ is a Markov kernel on $\Zset$, we denote by
  $P^k$ the $k$-th iterate of $P$ defined recursively as $P^0(z,A)
  \eqdef \1_A(z)$, and $P^k(z,A) \eqdef \int P^{k-1}(z,\rmd z)P(z,A)$,
  $k\geq 1$. Finally, the kernel $P$ acts on probability measure: for
  any probability measure $\mu$ on $\Zsigma$, $\mu P$ is a probability
  measure defined by
\[
\mu P(A) \eqdef \int \mu(\rmd z) P(z,A), \qquad A \in \Zsigma;
\]
and $P$ acts on positive measurable functions: for a measurable
function $f:\Zset \to \rset_+$, $Pf$ is a function defined by
\[
Pf(z) \eqdef \int f(y) \, P(z, \rmd y).
\]
We refer the reader to \cite{meyn:tweedie:2009} for the definitions
and basic properties of Markov chains.

In this section, we assume that $Y_k$ is a Monte Carlo approximation of the expectation $\nabla
f(x_{k-1})$ :
\[
Y_{k} = m^{-1} \sum_{j=0}^{m-1} \Phi(x_{k-1},\Xb{k}{j}) \eqsp;
\]
for all $k \geq 1$, conditionally to the past, $\{ \Xb{k}{j}, 1 \leq j \leq
m \}$ is a Markov chain started from $\Xb{k-1}{m}$ and with transition
kernel $M_{x_{k-1}}$ (we set $\Xb{0}{m} =x_\star \in \Xset$), where for all
$x \in \Xset$, $M_x$ is a Markov kernel with invariant distribution
$\pi_x$.

From a mathematical standpoint, the Markovian setting is trickier than the fixed batch size, because $Y_{k}$
is no longer an unbiased estimator of $\nabla f(x_{k-1})$, \ie\ the bias $B_k$
defined by
\begin{align}
\label{eq:MonteCarlo:bias}
B_k \eqdef  \CPE{Y_{k}}{\mcf_{k-1}} - \nabla f(x_{k-1}) & = m^{-1} \sum_{j=0}^{m-1}
\CPE{ \Phi(x_{k-1},\Xb{k}{j})}{\mcf_{k-1}}  - \nabla f(x_{k-1}) \nonumber  \\
& = m^{-1} \sum_{j=0}^{m-1}  M_{x_{k-1}}^j \Phi(x_{k-1},\Xb{k}{0}) - \nabla f(x_{k-1})  \eqsp,
\end{align}
does not vanish.

Let $W: \Zset \to \coint{1,\infty}$ be a measurable function.
The $W$-norm of a measurable function $h: \Zset \to \rset^\ell$ is defined as
$\normW{h}{W}=\sup_{z\in\Zset} \vectornorm{h(z)}/W(z)$.
The $W$-variation of a finite signed measure $\mu$ on the measurable space $(\Zset,\Zsigma)$ as $\normWm{\mu}{W} \eqdef \sup_{\normW{h}{W} \leq 1} \mu(h)$ where supremum is taken over all measurable functions $h: \Zset \to \rset$ satisfying $\normW{h}{W} \leq 1$.
Denote by $D_W(x,x')$ the $W$-variation of the kernels $M_x$ and $M_{x'}$
\begin{equation*}
D_W(x,x')=\sup_{z \in \Zset}\frac{\normWm{{M_x(z,\cdot)- M_{x'}(z,\cdot)}}{W}}{W(z)}.
\end{equation*}
Consider the following assumptions
\begin{hypB}
 \label{ass: geometric ergodicity}
There exists $\lambda\in \coint{0,1}$, $b<\infty$ and a measurable function $W\colon\Zset\to[1,+\infty)$ such that
 \[
  \sup_{x\in K} \normWm{\Phi(x,\cdot)}{W^{1/2}}<\infty, \quad \sup_{x\in K} M_x W\leq \lambda W +b.
 \]
In addition for any $\ell \in (0,1]$  there exists $C<\infty$ and $\rho\in (0,1)$ such that for any $z\in\Zset$,
\[
 \sup_{x\in K} \normWm{M_x^n(z,\cdot)-\pi_x}{W^\ell}\leq C\rho^n W^\ell(z).
\]
\end{hypB}
Sufficient conditions for the uniform-in-$x$ ergodic
behavior  are given e.g. in \cite[Lemma 2.3]{fort:moulines:priouret:2012}, in terms of
aperiodicity, irreducibility and minorization conditions on the
kernels $\set{M_x}{x \in \Xset}$. Examples of MCMC kernels
$M_x$ satisfying this assumption can be found in
\cite[Proposition 12]{andrieu:moulines:2006}, \cite[Proposition
15]{saksman:vihola:2010}.
\begin{hypB}
 \label{ass:lipschitz-kernel}
 The kernels $M_x$ and the stationary distributions $\pi_x$ are locally Lipschitz \wrt\  $x$, \ie\ for any compact set $K$ and any $x,x'\in K$ there exists $C<\infty$ such that
 \[
  \normWm{\Phi(x,\cdot) -\Phi({x'},\cdot)}{W^{1/2}} + D_{W^{1/2}}(x,x')\leq C \vectornorm{x-x'}\eqsp.
 \]
\end{hypB}

\begin{proposition}\label{prop:markovian}
Let us consider $\sequence{x}[k][\nset^*]$ defined by \eqref{eq:stochProxGrad_sto-2} or \eqref{eq:stochProxGrad_sto-1} with Markovian dynamic $\sequence{z}[k][\nset^*]$ and with $\Phi$ satisfying \eqref{eq:def_H_Markov}.
 Assume (B\ref{ass: geometric ergodicity}--\ref{ass:lipschitz-kernel}), (P\ref{assP:field})--(P\ref{assP:stepsize}), $\PE[W(z_1)]<\infty$, and
 \begin{equation*}
\sum_{k=1}^\infty |\gamma_{k-1}-\gamma_k|<\infty,\quad \sum_{k=1}^\infty \gamma_k^2 <\infty \eqsp.
 \end{equation*}
 Then (P\ref{assP:noise}) is satisfied.
\end{proposition}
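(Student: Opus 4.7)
The plan is to follow the classical Poisson equation decomposition for stochastic approximation with Markovian dynamics, as developed in \cite{andrieu:moulines:2006,fort:moulines:priouret:2012}, adapted to our setting. Set $\delta_k \eqdef Y_k - \nabla f(x_{k-1})$ where $Y_k = m^{-1}\sum_{j=0}^{m-1} \Phi(x_{k-1}, \Xb{k}{j})$. We must decompose $\delta_k = e_k^\delta + r_k^\delta$ with $\lim_k \vectornorm{r_k^\delta} = 0$ almost surely and with $\sum_k \gamma_k e_k^\delta$ convergent almost surely.

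The first step is to use the uniform geometric ergodicity in (B\ref{ass: geometric ergodicity}) to show that, for each $x \in K$, the Poisson equation
\[
\hat\Phi_x(z) - M_x \hat\Phi_x(z) = \Phi(x,z) - \pi_x(\Phi(x,\cdot))
\]
admits the solution $\hat\Phi_x(z) = \sum_{n\geq 0}\{M_x^n \Phi(x,\cdot)(z) - \pi_x(\Phi(x,\cdot))\}$, and that this solution together with $M_x \hat\Phi_x$ satisfies $\normW{\hat\Phi_x}{W^{1/2}} + \normW{M_x \hat\Phi_x}{W^{1/2}} \leq C$ uniformly for $x \in K$. Combining with the Lipschitz property in (B\ref{ass:lipschitz-kernel}), a standard argument (perturbation of geometrically ergodic kernels, \cite[Proposition~6]{andrieu:moulines:2006}) yields the regularity estimate $\normW{\hat\Phi_x - \hat\Phi_{x'}}{W^{1/2}} \leq C\vectornorm{x-x'}$ uniformly for $x,x' \in K$.

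The second step is to rewrite, for each minibatch index $j$,
\[
\Phi(x_{k-1}, \Xb{k}{j}) - \nabla f(x_{k-1}) = \hat\Phi_{x_{k-1}}(\Xb{k}{j}) - M_{x_{k-1}}\hat\Phi_{x_{k-1}}(\Xb{k}{j}),
\]
and split each such term via the telescoping identity
\begin{align*}
\hat\Phi_{x_{k-1}}(\Xb{k}{j}) - M_{x_{k-1}}\hat\Phi_{x_{k-1}}(\Xb{k}{j})
&= \bigl\{\hat\Phi_{x_{k-1}}(\Xb{k}{j}) - M_{x_{k-1}}\hat\Phi_{x_{k-1}}(\Xb{k}{j-1})\bigr\} \\
&\quad + \bigl\{M_{x_{k-1}}\hat\Phi_{x_{k-1}}(\Xb{k}{j-1}) - \hat\Phi_{x_{k-1}}(\Xb{k}{j})\bigr\} \\
&\quad + \text{(boundary and kernel-change terms)}.
\end{align*}
After summing over $j$ and then organising telescopically across $k$ using the Lipschitz bound on $x \mapsto \hat\Phi_x$, $\delta_k$ decomposes as the sum of (a) a martingale increment $\xi_k$ with $\CPE{\xi_k}{\mcf_{k-1}} = 0$, (b) a telescoping remainder of the form $A_{k-1} - A_k$ with $A_k$ bounded in $L^2$, (c) a term bounded by $C\vectornorm{x_k - x_{k-1}} W^{1/2}(\Xb{k}{m})$, and (d) the bias $B_k$ from \eqref{eq:MonteCarlo:bias}, which is $O(\rho^m)$ times $W^{1/2}$ under (B\ref{ass: geometric ergodicity}).

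For the convergence claim, I set $e_k^\delta$ to collect the martingale part and the telescope (rewritten as $\gamma_k e_k^\delta = \gamma_k \xi_k + \gamma_k(A_{k-1} - A_k)$). A standard Abel summation together with $\sum_k |\gamma_k - \gamma_{k-1}| < \infty$ converts the telescope into an absolutely summable series of $(\gamma_k - \gamma_{k-1}) A_{k-1}$ plus boundary terms, which converges once we verify $\sup_k \Ex[W(\Xb{k}{0})] < \infty$; this last bound follows from iterating the drift condition $M_x W \leq \lambda W + b$ and the initial integrability $\Ex[W(z_1)] < \infty$. The martingale part $\sum_k \gamma_k \xi_k$ converges almost surely by the $L^2$ martingale convergence theorem, using $\sum_k \gamma_k^2 < \infty$ and $\Ex[\xi_k^2 \mid \mcf_{k-1}] \leq C W(\Xb{k}{0})$. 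I set $r_k^\delta$ to be the Lipschitz remainder plus the bias; from \eqref{eq:bound_x_norm} (applied within the proof of the $K$-PPAD property) we have $\vectornorm{x_k-x_{k-1}} = O(\gamma_k)$, and together with $\gamma_k \to 0$ and the exponentially small bias $O(\rho^m)$ this yields $\vectornorm{r_k^\delta} \to 0$ almost surely.

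The main obstacle is the careful control of the Lipschitz dependence of $\hat\Phi_x$ on $x$ through the varying invariant measure $\pi_x$, and the verification that all functionals appearing in the decomposition remain integrable under the $W$-drift condition uniformly over $k$; once these estimates are in place, the splitting into $e_k^\delta$ and $r_k^\delta$ satisfies (P\ref{assP:noise}) by the summability assumptions on $\sequence{\gamma}[k][\nset^*]$.
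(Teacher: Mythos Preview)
Your overall strategy---Poisson equation, then split into martingale, telescope, and remainder---matches the paper's. The gap is in how you allocate terms between $e_k^\delta$ and $r_k^\delta$.

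The claim $\vectornorm{x_k - x_{k-1}} = O(\gamma_k)$ is false in this setting. Inequality \eqref{eq:bound_x_norm} reads $\vectornorm{x_k - x_{k-1}} \leq \gamma_k(\vectornorm{\nabla f(x_{k-1})} + \vectornorm{\delta_k} + \vectornorm{u_k})$, and under (B\ref{ass: geometric ergodicity}) the term $\vectornorm{\delta_k}$ is only controlled by $C W^{1/2}(z_k)$, which is unbounded; the drift condition gives $\sup_k \PE[W(z_k)] < \infty$, not a pathwise bound. The correct estimate (this is \Cref{lem:lipschitz x} in the paper) is $\vectornorm{x_k - x_{k-1}} \leq C\gamma_k W^{1/2}(z_k)$. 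Your Lipschitz remainder (c) is therefore of order $\gamma_k W(z_k)$, and there is no argument that this tends to $0$ almost surely. Likewise, once the Poisson identity $\Phi_x - \pi_x\Phi_x = \hat\Phi_x - M_x\hat\Phi_x$ is used exactly there is no residual ``bias'' term left over; the quantity $B_k$ in \eqref{eq:MonteCarlo:bias} is entirely absorbed by the decomposition and should not reappear in $r_k^\delta$---and in any case a term of size $O(\rho^m)W^{1/2}(z_k)$ with $m$ fixed does not tend to $0$ in $k$.

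The remedy is to take $r_k^\delta \equiv 0$ and prove directly that $\sum_k \gamma_k \delta_k$ converges almost surely. After writing $\delta_k = \delta M_k + \kappa_k$ with $\delta M_k$ a martingale increment and $\kappa_k = M_{x_{k-1}}\hat\Phi_{x_{k-1}}(z_{k-1}) - M_{x_{k-1}}\hat\Phi_{x_{k-1}}(z_k)$, an Abel summation on $\sum_l \gamma_l \kappa_l$ produces boundary terms (controlled by $\gamma_k \to 0$), step-size differences (controlled by $\sum_k|\gamma_k-\gamma_{k-1}| < \infty$), and a kernel-change piece whose $L^1$ norm is bounded by $C\sup_k \PE[W(z_k)]\sum_{l>n}\gamma_l^2$, using precisely the estimate $\vectornorm{x_l - x_{l-1}} \leq C\gamma_l W^{1/2}(z_l)$. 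The Lipschitz remainder is thus handled in $L^1$ as part of the summable series, not claimed to vanish termwise.
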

\begin{proof}
The proof follows along the same lines as \cite[Proof of Lemma 27]{Miasojedow2013}. However for completeness we give a detailed proof in \Cref{ap:Markov}.
\end{proof}

\subsection{Projected stochastic subgradient descent algorithm}
We consider the projected subgradient descent algorithm framework introduced in \cite{nemirovski2009robust}, for constrained minimization of a possibly nonsmooth convex objective function.
Let $\Xset$ be an open set of $\rset^d$ and $K \subset \Xset$ be a  convex compact. Consider the constrained minimization problem $\argmin_{x\in K } f(x)$, where $f$ is locally Lipschitz
regular function (see~\Cref{def:regular-function}).
The projected stochastic subgradient algorithm generates iteratively the sequence
$\sequence{x}[k][\nset^*]$  as follows
\begin{equation}\label{iter:subgrad}
 x_{k}=\Pi_K(x_{k-1}-\gamma_{k} Y_{k})\eqsp,
\end{equation}
where $\sequence{\gamma}[k][\nset^*]$ is a sequence of positive step sizes, $\Pi_K$ is the projection on the set $K$ and $Y_k$ is a noisy version of Clarke generalized gradient, \ie\
$Y_k=v_k+\delta_k$ with $v_k\in\bar\partial  f(x_{k-1})$.

In \cite{Davis2018Stochastic}, the stochastic subgradient defined recursively by $x_k = x_{k-1}  - \gamma_k Y_k$ (without projection) is analyzed, under the assumption that the iterates $\sequence{x}[k][\nset]$ stay in the compact set $K$ and that the noise $\sequence{\delta}[k][\nset^*]$ is bounded. This paper establishes the almost-sure convergence of the iterates $\sequence{x}[k][\nset^*]$ to the stationary set
$\mathcal{S} \eqdef \set{x \in K}{0 \in \bar\partial{f}(x)}$, under a descent condition on $f$.
Specifically, it is assumed in \cite{Davis2018Stochastic} that if $z : \rset_{+} \rightarrow \rset^d$ is a solution of the differential inclusion $\dot{z}(t) \in - \bar{\partial}f(z(t))$ and $z(0) \not \in \mathcal{S}$ (\ie\ $z(0)$ is not a critical point of $f$), then there exists a $ T > 0$ such that $f(z(T)) < \sup_{t \in [0, T)} f(z(t)) \leq f(z(0))$.
It is also proved in this paper that this condition is satisfied for two classes of functions: subdifferentially regular functions and Whitney stratifiable functions.
The condition of subdifferential regularity is equivalent to our condition of regularity (see \Cref{lem:regular-equivalence}), while the class of Whitney stratifiable functions
is much wider than the class of regular functions, and contains for example the class of semialgebraic and semianalytic functions.

The convergence of stochastic subgradient algorithm for regular functions $f$ can be easily deduced  from \Cref{sec:convergencePAD}. Here, we show that projected stochastic subgradient descent algorithm also fits into our framework and its convergence can be established based on the results of \Cref{sec:convergencePPAD}.

Consider the following assumptions:
\begin{hypPp}
\label{assPp:field}
$\Xset \subset \rset^d$ is an open set, $K\subset\Xset$ is a convex compact set, $f: \Xset \to \rset$ is a
locally Lipschitz, regular function (see \Cref{def:regular-function}).
\end{hypPp}

\begin{hypPp}
\label{assPp:stepsize}
The sequence of step sizes $\sequence{\gamma}[k][\nset^*]$ satisfies $\gamma_k > 0$, $\sum_{k=0}^{\infty} \gamma_k = \infty$, and $\lim_{k \rightarrow \infty} \gamma_k = 0$.
\end{hypPp}
\begin{hypPp}
\label{assPp:noise}
The sequence $\sequence{\delta}[k][\nset^*]$ can be decomposed as $\delta_k= e^\delta_k + r^\delta_k$ where $\sequence{e^\delta}[k][\nset^*]$ and $\sequence{r^\delta}[k][\nset^*]$ are two sequences satisfying $\lim_{k \to \infty} \vectornorm{r^\delta_k} = 0$ and the series $\sum_{k=1}^\infty \gamma_k e^\delta_k$ converges.
\end{hypPp}

\begin{proposition}
\label{prop:stochsubgrad}
Assume (\^{P}\ref{assPp:field}--\ref{assPp:noise}) is satisfied.
 Then, the sequence $\sequence{x}[k][\nset]$ defined by \eqref{iter:subgrad} is $K$-PPAD with $F= - \bar{\partial} f$
 (where $\bar{\partial}$ denotes the Clarke generalized gradient) and noise $\sequence{\eta}[k][\nset]$ where for each $k \in \nset^*$,
 $\eta_k=  -\delta_k $.
 Moreover, Assumptions (A\ref{ass:open}--\ref{ass: limit_y_x}) are satisfied.

\end{proposition}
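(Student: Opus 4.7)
The plan is to show directly that iteration \eqref{iter:subgrad} fits the $K$-PPAD template \eqref{eq:ppad} with the indicated choices, and then to verify the four structural conditions (A\ref{ass:open}--\ref{ass: limit_y_x}) in turn.

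First I would observe that setting $y_k \eqdef x_{k-1}$ and recalling that $Y_k = v_k + \delta_k$ with $v_k \in \bar\partial f(x_{k-1})$, the recursion \eqref{iter:subgrad} can be rewritten as
\[
x_k \in \Pi_K\bigl( x_{k-1} + \gamma_k \{ (-v_k) + (-\delta_k) \} \bigr),
\]
where $-v_k \in -\bar\partial f(x_{k-1}) = F(y_k)$. This matches \eqref{eq:ppad} with field $F = -\bar\partial f$ and noise $\eta_k = -\delta_k$. Next I would verify (A\ref{ass:open}): by the properties of the Clarke generalized gradient recalled after \Cref{def:upper-hemicontinuity} (\cite[Propositions~2.1.2 and 2.1.5]{clarke1990optimization}), $\bar\partial f$ is nonempty, convex-compact valued, locally bounded and upper hemicontinuous on $\Xset$; negation preserves all these properties, so $F$ satisfies (A\ref{ass:open}).

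Assumption (A\ref{ass:stepsize}) is just a restatement of (\^{P}\ref{assPp:stepsize}). For (A\ref{ass:noise}), decompose $\eta_k = e_k + r_k$ with $e_k \eqdef -e^\delta_k$ and $r_k \eqdef -r^\delta_k$; then $\lim_k \vectornorm{r_k} = 0$ and $\sum_k \gamma_k e_k$ converges directly from (\^{P}\ref{assPp:noise}).

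The only condition requiring genuine work is (A\ref{ass: limit_y_x}), which, since $y_k = x_{k-1}$, amounts to showing $\lim_k \vectornorm{x_k - x_{k-1}} = 0$. I would use non-expansivity of $\Pi_K$ together with $x_{k-1} \in K$, so that $x_{k-1} = \Pi_K(x_{k-1})$, to get
\[
\vectornorm{x_k - x_{k-1}} = \vectornorm{\Pi_K(x_{k-1} - \gamma_k Y_k) - \Pi_K(x_{k-1})} \leq \gamma_k \vectornorm{v_k} + \gamma_k \vectornorm{\delta_k}.
\]
Since $x_{k-1}$ ranges over the compact set $K$ and $\bar\partial f$ is locally bounded by (A\ref{ass:open}), $\sup_k \vectornorm{v_k} < \infty$, so $\gamma_k \vectornorm{v_k} \to 0$ by (\^{P}\ref{assPp:stepsize}). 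Moreover, convergence of the series $\sum_k \gamma_k e^\delta_k$ implies $\gamma_k e^\delta_k \to 0$, so $\gamma_k \vectornorm{\delta_k} \leq \gamma_k \vectornorm{e^\delta_k} + \gamma_k \vectornorm{r^\delta_k} \to 0$ because $\vectornorm{r^\delta_k} \to 0$ and $\gamma_k \to 0$. Combining yields (A\ref{ass: limit_y_x}). The main obstacle is nothing deeper than this displacement estimate; everything else is a direct unpacking of definitions and citations of Clarke's calculus.
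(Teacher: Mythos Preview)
Your proposal is correct and follows essentially the same route as the paper's proof: identify $y_k = x_{k-1}$, cite Clarke's calculus for (A\ref{ass:open}), read off (A\ref{ass:stepsize})--(A\ref{ass:noise}) from (\^{P}\ref{assPp:stepsize})--(\^{P}\ref{assPp:noise}), and bound $\vectornorm{x_k - x_{k-1}}$ by $\gamma_k(\vectornorm{v_k}+\vectornorm{\delta_k})$ via the projection to get (A\ref{ass: limit_y_x}). The only cosmetic differences are that the paper phrases the displacement bound as $\vectornorm{x_k - x_{k-1}} \leq \vectornorm{w_k - x_{k-1}}$ with $w_k = x_{k-1} - \gamma_k Y_k$ rather than invoking non-expansivity explicitly, and it does not spell out (as you do) why $\gamma_k\vectornorm{\delta_k}\to 0$; your extra line using the necessary condition $\gamma_k e^\delta_k \to 0$ is a welcome clarification.
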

\begin{proof}
 Set $F= - \bar{\partial} f$. The Clarke gradient of a locally Lipschitz function is convex-compact valued and locally bounded (see \cite[Proposition~2.1.2]{clarke1990optimization}) and is upper hemi-continuous (see \cite[Proposition~2.1.5]{clarke1990optimization}. Hence  (A\ref{ass:open}) is satisfied.
 By construction the $\sequence{x}[k][\nset]$ generated by \eqref{iter:subgrad} is a  K-PPAD with stepsizes $\sequence{\gamma}[k][\nset^*]$, noise $\eta_k=-\delta_k$ and $y_k=x_{k-1}$.
 Hence, assumptions (\^{P}\ref{assPp:stepsize}--\ref{assPp:noise}) implies (A\ref{ass:stepsize}-\ref{ass:noise}). We need to check only (A\ref{ass: limit_y_x}). Denote for any $k\in\nset^*$
 by $w_k=x_{k-1}-\gamma_{k} Y_{k}=x_{k-1}-\gamma_k v_{k}-\gamma_k \delta_k$, where $v_k\in\bar\partial f(x_{k-1})$. Since $x_k$ is projection of $w_k$ on the compact convex set $K$, the triangle inequality implies
 \begin{equation}
  \label{eq:y_x_sub}
 \vectornorm{x_{k}-y_{k}}=\vectornorm{x_{k}-x_{k-1}}\leq\vectornorm{w_{k}-x_{k-1}}\leq \gamma_k\vectornorm{v_k}+ \gamma_k\vectornorm{\delta_k}\eqsp.
 \end{equation}
Since $\sequence{x}[k][\nset^*]$ remains in the compact set $K$ and $\bar\partial f$ is localy bounded we obtain that $\sup_k \vectornorm{v_k}<\infty$. Thereofore by (\^{P}\ref{assPp:stepsize}--\ref{assPp:noise}) and
\eqref{eq:y_x_sub} we get that $\lim_{k\to\infty}\vectornorm{x_{k}-y_{k}}=0$, and that completes the proof.

 \end{proof}

Applying our results from \Cref{sec:convergencePPAD}, we now show that  projected stochastic subgradient algorithms converge.
\begin{theorem} \label{thm:stoch-prox-subgrad-conv}
Assume (\^{P}\ref{assP:field}--\ref{assP:noise}) and denote
\begin{equation*}
\mathcal{S} \eqdef \set{x \in K}{0 \in  \bar{\partial} f(x) - {\normalcone{K}}(x)} \eqsp,
\end{equation*}
where $\bar{\partial} f$ is the Clarke gradient of $g$ (see \Cref{def:clarke-gradient}), and ${\normalcone{K}}(x)$
is the normal cone to set $K$ at $x \in K$ (see \eqref{eq:normal_cone}). Suppose $f(\mathcal{S})$ has empty interior
and $\sup_{k \in \nset^*} \vectornorm{\delta_k} < \infty$.
Then the sequence $\sequence{x}[k][\nset]$ generated by the iterations \eqref{iter:subgrad} converges  to the $\mathcal{S}$.
\end{theorem}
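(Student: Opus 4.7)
The plan is to apply \Cref{thm:convergence_proj} with Lyapunov function $V = f$, in direct analogy with the proof of \Cref{thm:stoch-prox-grad-conv}. First, \Cref{prop:stochsubgrad} already shows that $\sequence{x}[k][\nset^*]$ is a $K$-PPAD driven by $F = -\bar\partial f$ with $y_k = x_{k-1}$ and noise $\eta_k = -\delta_k$, and verifies (A\ref{ass:open}--\ref{ass: limit_y_x}); here (A\ref{ass:open}) comes from \cite[Propositions~2.1.2 and 2.1.5]{clarke1990optimization}, and (A\ref{ass:noise}) from the decomposition in (\^{P}\ref{assPp:noise}). The iterates lie in the compact $K$ by construction, so the compact-stability hypothesis of \Cref{thm:convergence_proj} is automatic, and the assumption $\sup_k\vectornorm{\delta_k}<\infty$ yields $\sup_k\vectornorm{e_k}<\infty$.

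The substance of the proof is to construct a Lyapunov pair $(V, U)$ for the projected field $F_K(x) \eqdef \Pi_{\tangentcone{K}(x)}(F(x))$. Under (\^{P}\ref{assPp:field}), $V = f$ is locally Lipschitz and regular, and $\bar\partial V(x) = \bar\partial f(x) = -F(x)$. For $a \in \mathcal{L}_{F_K} V(x)$, I would pick $u \in F(x)$ with $v = \Pi_{\tangentcone{K}(x)}(u)$; since $-u \in \bar\partial V(x)$, the definition of the set-valued Lie derivative gives $a = \pscal{v}{-u} = -\vectornorm{v}^2 + \pscal{v}{v - u}$. The projection identity $\pscal{\Pi_{\tangentcone{K}(x)}(u)}{\Pi_{\tangentcone{K}(x)}(u) - u} = 0$ from \cite[Proposition~0.6.2]{aubin2012differential} then collapses this to $a = -\vectornorm{\Pi_{\tangentcone{K}(x)}(u)}^2$. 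Setting
\[
U(x) \eqdef -\min_{u \in F(x)} \vectornorm{\Pi_{\tangentcone{K}(x)}(u)}^2 = -\min_{v \in F(x) - \normalcone{K}(x)} \vectornorm{v}^2,
\]
where the second equality uses \cite[Proposition~0.6.4]{aubin2012differential}, one obtains $\sup \mathcal{L}_{F_K} V(x) \leq U(x) \leq 0$ on $K$.

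Upper semicontinuity of $U$ is handled exactly as in the proximal case: $F$ is upper hemicontinuous and compact-convex valued by (A\ref{ass:open}), $\normalcone{K}$ has closed graph by \cite[Chapter~5, Section~1, Theorem~1]{aubin2012differential}, so $F - \normalcone{K}$ has closed graph via \cite[Proposition~1.1.2]{aubin2012differential}, and \Cref{lem:lower-semicont-inf} concludes. The stationary set is then identified by $\{x \in K : U(x) = 0\} = \{x \in K : 0 \in F(x) - \normalcone{K}(x)\} = \mathcal{S}$, and because $f(\mathcal{S})$ has empty interior, \Cref{thm:convergence_proj} delivers $\lim_{k\to\infty} \dist(x_k, \mathcal{S}) = 0$. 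I do not anticipate any serious obstacle: the argument is a direct specialization of \Cref{thm:stoch-prox-grad-conv} with $g \equiv 0$, and the only care needed is the bookkeeping of tangent/normal cone sign conventions in the Lie-derivative computation.
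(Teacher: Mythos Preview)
Your proposal is correct and follows precisely the approach the paper intends: its proof of \Cref{thm:stoch-prox-subgrad-conv} simply reads ``The proof follows along the same lines as the proof of \Cref{thm:stoch-prox-grad-conv}'', and you have faithfully spelled out that argument with $V=f$, $F=-\bar\partial f$, the same Lie-derivative computation via \cite[Propositions~0.6.2 and~0.6.4]{aubin2012differential}, and the same upper-semicontinuity step through \Cref{lem:lower-semicont-inf}.
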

\begin{proof}
 The proof follows along the sime lines as proof of \Cref{thm:stoch-prox-grad-conv}.
\end{proof}
Note, that adaptation of results from \Cref{subsec:online} and \Cref{subsec:montecarlo} to the case of projected stochastic subgradient descent algorithm is straightforward.

\section{Proofs}\label{ap:prelim}
In this section we introduce some notations and preliminary facts used in the proofs of results from \Cref{sec:convergencePAD} and \Cref{sec:convergencePPAD},
as well as some auxiliary definitions and theorems.

\begin{lemma}\label{lem:lower-semicont-inf}
Let $K$ be a compact subset of $\rset^d$, and
$G: K \rightarrow \mathcal{P}(\rset^d)$ be a nontrivial closed set-valued map (\ie\ for any $x \in K$, $G(x) \ne \emptyset$ and the graph of the function $G$ is closed; see \cite[Section~2]{clarke1990optimization}).
Then  $W : K \rightarrow \rset$ defined by
\[
W(x) = -\min_{v \in G(x)} \Vnorm{v}^2
\]
is upper semicontinuous.
\end{lemma}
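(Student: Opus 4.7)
The plan is to work with $\varphi(x) \eqdef -W(x) = \min_{v \in G(x)} \Vnorm{v}^2$ and show that $\varphi$ is lower semicontinuous on $K$; this is equivalent to $W$ being upper semicontinuous. I will use the sequential characterization: I fix $x \in K$ and an arbitrary sequence $\sequence{x}[n][\nset] \subset K$ with $x_n \to x$, and show $\liminf_{n \to \infty} \varphi(x_n) \geq \varphi(x)$.

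First, I should check that $\varphi$ is well defined with values in $\coint{0,\infty}$ and that the infimum is actually attained. Because the graph of $G$ is closed, each fiber $G(x)$ is a closed subset of $\rset^d$ (intersect the graph with $\{x\} \times \rset^d$ and project). Since $G(x)$ is nonempty and $v \mapsto \Vnorm{v}^2$ is continuous and coercive, an infimizing sequence in $G(x)$ is bounded, admits a convergent subsequence, and the limit lies in $G(x)$ by closedness; hence the minimum is attained and $\varphi(x) < \infty$.

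For the semicontinuity argument, I may assume $\liminf_n \varphi(x_n) < \infty$ (otherwise there is nothing to prove). Passing to a subsequence, I may further assume $\varphi(x_n) \to \liminf_n \varphi(x_n)$. For each $n$, pick $v_n \in G(x_n)$ realizing the minimum, so $\Vnorm{v_n}^2 = \varphi(x_n)$. Since $\Vnorm{v_n}^2$ converges to a finite limit, the sequence $\sequence{v}[n][\nset]$ is bounded in $\rset^d$, so it admits a subsequence $\sequence{v}[n_k][\nset]$ converging to some $v^\star \in \rset^d$. By construction $(x_{n_k}, v_{n_k})$ lies in the graph of $G$ and converges to $(x, v^\star)$; closedness of the graph yields $v^\star \in G(x)$.

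The conclusion then follows from the chain
\[
\varphi(x) \leq \Vnorm{v^\star}^2 = \lim_{k \to \infty} \Vnorm{v_{n_k}}^2 = \liminf_{n \to \infty} \varphi(x_n),
\]
where the first inequality uses $v^\star \in G(x)$ and the definition of $\varphi(x)$ as a minimum. There is no real obstacle: the argument is a standard closed-graph plus coercivity compactness reasoning, and the only mild care needed is to justify boundedness of the minimizing sequence $\sequence{v}[n][\nset]$, which follows directly from the assumption $\liminf_n \varphi(x_n) < \infty$.
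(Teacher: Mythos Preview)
Your proof is correct and follows essentially the same approach as the paper: both extract minimizers along a subsequence realizing the relevant extremum, use boundedness of these minimizers to pass to a convergent subsequence, invoke closedness of the graph to place the limit in $G(x)$, and conclude via continuity of the norm. Your write-up is slightly more careful in justifying that the minimum is attained (via coercivity) and in explicitly dispatching the case $\liminf_n \varphi(x_n)=\infty$, but the argument is the same.
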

\begin{proof}
Let $x \in K$ and $\sequence{x}[n][\nset] \subset K$  be any sequence such that $\lim_{n\to\infty}x_n=x$. Consider a subsequence $\subsequence{x}[n][k][\nset]$ such that
$\lim_k{W(x_{n_k})}=\limsup_n W(x_n)$.
For any $\tilde{x} \in K$, there exists $\tilde{w} \in G(\tilde{x})$ such that $-\vectornorm{\tilde{w}}^2= W(\tilde{x})$ (since $G(\tilde{x})$ is closed and $\|\cdot\|$ is continuous).
We may therefore define a sequence $\subsequence{w}[n][k][\nset]$  such that for all $k \in \nset$, $-\vectornorm{w_{n_k}}^2=W(x_{n_k})$. Because $\lim_{k \to \infty} W(x_{n_k})$ is finite, the subsequence $\subsequence{w}[n][k][\nset]$ is bounded. We may hence  extract a subsequence
$\subsequence{w}[\tilde n][k][\nset]\subseteq \subsequence{w}[n][k][\nset]$ such that $\lim_{k\to\infty} w_{\tilde{n}_k} = w$ for some $w\in \rset^d$. Since $G$ has closed graph we have
$w\in G(x)$. By continuity of norm we get
\[
 \limsup_n{W(x_n)}=\lim_{k\to\infty} W(x_{\tilde n_k})=-\lim_{k\to\infty} \vectornorm{w_{\tilde n_k}}^2=-\vectornorm{w}^2\leq W(x)\eqsp,
\]
which means that $W$ is upper semicontinuous.
\end{proof}

\begin{lemma}
\label{lem:regularity-sum}
Let $d \in \nset^*$,  $p_i: \rset \to \rset$, $i \in \{1,\dots,d\}$,  be functions and $x^0= (x^0_1,\dots,x^0_d) \in \rset^d$. If for any $i \in \{1,\dots,d\}$ the functions $p_i$ are regular at $x^0_i$, then the function $g(x_1,\dots,x_d)= \sum_{i=1}^d p_i(x_i)$ is regular at $x^0$.
\end{lemma}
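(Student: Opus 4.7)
The plan is to lift the one-variable regularity of each $p_i$ to regularity on $\rset^d$ of the single-variable extension $\tilde p_i(x_1,\dots,x_d) := p_i(x_i)$, and then invoke the known sum rule (Clarke, Proposition~2.3.6) stating that a finite nonnegative linear combination of functions regular at a common point is regular there. Since $g = \sum_{i=1}^d \tilde p_i$, once each $\tilde p_i$ is shown to be regular at $x^0$, we are done.

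First I would verify that $\tilde p_i$ is locally Lipschitz at $x^0$: this is immediate since $p_i$ is locally Lipschitz at $x^0_i$ (a consequence of regularity) and $\tilde p_i$ is constant in the coordinates $j\neq i$. Hence the generalized directional derivative $\tilde p_i^0(x^0,h)$ is well defined for every $h=(h_1,\dots,h_d)\in\rset^d$.

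The key computation is to show that
\[
\tilde p_i^0(x^0,h) = p_i^0(x^0_i,h_i),
\quad\text{and}\quad
\lim_{\lambda\downarrow 0^+}\frac{\tilde p_i(x^0+\lambda h)-\tilde p_i(x^0)}{\lambda} = \lim_{\lambda\downarrow 0^+}\frac{p_i(x^0_i+\lambda h_i)-p_i(x^0_i)}{\lambda}.
\]
The second identity is obvious from the definition of $\tilde p_i$. For the first, I would observe that the difference quotient inside the $\sup$ in \Cref{def:gen-dir-deriv} depends only on the $i$-th coordinate $\tilde h_i$ of the perturbation $\tilde h$. Since the projection $\tilde h\mapsto \tilde h_i$ from the Euclidean ball $\{\|\tilde h\|\leq\delta\}$ in $\rset^d$ onto the interval $\{|\tilde s|\leq\delta\}$ in $\rset$ is surjective, the supremum over $\|\tilde h\|\leq\delta$ coincides with the supremum over $|\tilde s|\leq\delta$, giving $\tilde p_i^0(x^0,h)=p_i^0(x^0_i,h_i)$.

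Combining these two identities with the regularity of $p_i$ at $x^0_i$ yields $\tilde p_i^0(x^0,h) = p_i^0(x^0_i,h_i) = \lim_{\lambda\downarrow 0^+}(p_i(x^0_i+\lambda h_i)-p_i(x^0_i))/\lambda = \lim_{\lambda\downarrow 0^+}(\tilde p_i(x^0+\lambda h)-\tilde p_i(x^0))/\lambda$ for every $h\in\rset^d$, which is exactly the definition of regularity of $\tilde p_i$ at $x^0$. Applying the Clarke sum rule (with unit weights) to the regular functions $\tilde p_1,\dots,\tilde p_d$ then yields that $g=\sum_{i=1}^d\tilde p_i$ is regular at $x^0$. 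The only mildly subtle point is the surjectivity argument used to equate the two suprema in the definition of the generalized directional derivative; everything else is bookkeeping.
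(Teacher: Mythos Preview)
Your proposal is correct and follows exactly the same strategy as the paper: lift each $p_i$ to $\tilde p_i(x)=p_i(x_i)$, argue that $\tilde p_i$ is regular at $x^0$, and apply Clarke's sum rule (Proposition~2.3.6) with unit weights. In fact you supply the detailed verification (matching the generalized and ordinary directional derivatives of $\tilde p_i$ with those of $p_i$) that the paper simply asserts without proof, so your argument is a strict elaboration of theirs.
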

\begin{proof}
By \cite[Proposition~2.3.6]{clarke1990optimization} a finite linear combination (by nonnegative scalars) of functions regular at $x^0$ is regular at $x^0$. The proof then follows by noting that, for any $i \in \{1,\dots,d\}$, the function $p^i(x_1,\dots,x_d)= p_i(x_i)$ is regular at $x^0$.
\end{proof}
\begin{lemma} \label{lem:regular-equivalence}
Let $f : \mathbb{R}^d \rightarrow \mathbb{R}$ be a locally Lipschitz function. Then $f$ is regular at $x$ if and only if for all $v \in \bar{\partial} f(x)$ we have:
\begin{equation} \label{property:subdiff-reg}
f(y) \geq f(x) + \pscal{v}{y-x} + o(\vectornorm{x - y}) \eqsp.
\end{equation}
\end{lemma}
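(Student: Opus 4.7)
The plan is to prove both implications separately, using the Clarke max formula $f^0(x,h)=\max_{v\in\bar\partial f(x)}\pscal{v}{h}$ (see \cite[Proposition~2.1.2]{clarke1990optimization}) and the fact that, since $f$ is Lipschitz on a neighbourhood of $x$ with some constant $L$, every $v\in\bar\partial f(x)$ satisfies $\vectornorm{v}\leq L$.

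For the backward direction, assume \eqref{property:subdiff-reg} holds for every $v\in\bar\partial f(x)$. I would fix $h\in\rset^d$ and set $y=x+\lambda h$ in \eqref{property:subdiff-reg}, which yields
\[
\liminf_{\lambda\downarrow 0}\frac{f(x+\lambda h)-f(x)}{\lambda}\geq \pscal{v}{h}
\]
for every $v\in\bar\partial f(x)$. Taking the supremum over $v\in\bar\partial f(x)$ and invoking the max formula gives the lower bound by $f^0(x,h)$. The reverse inequality $\limsup_{\lambda\downarrow 0}(f(x+\lambda h)-f(x))/\lambda\leq f^0(x,h)$ is immediate from \Cref{def:gen-dir-deriv} upon restricting the supremum there to $\tilde h=0$. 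Combining these two bounds shows that the ordinary directional derivative exists and equals $f^0(x,h)$, which is exactly regularity.

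For the forward direction, assume $f$ is regular at $x$ and fix $v\in\bar\partial f(x)$. For each unit vector $h$, the definition of $\bar\partial f$ combined with regularity gives
\[
\lim_{\lambda\downarrow 0}\frac{f(x+\lambda h)-f(x)}{\lambda}= f^0(x,h)\geq \pscal{v}{h},
\]
so that for every $\epsilon>0$ there exists $\delta(h)>0$ with $f(x+\lambda h)-f(x)\geq \lambda\pscal{v}{h}-\tfrac{\epsilon}{3}\lambda$ for $0<\lambda<\delta(h)$. The obstacle is to upgrade this pointwise-in-$h$ estimate into a statement uniform in the direction. I would do this by a compactness argument on the unit sphere $S^{d-1}$: cover $S^{d-1}$ by finitely many balls of radius $\epsilon/(3L)$ centred at $h_1,\ldots,h_N$, set $\delta=\min_i\delta(h_i)$, and for an arbitrary unit vector $h'$ pick $h_i$ with $\vectornorm{h'-h_i}\leq\epsilon/(3L)$. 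The Lipschitz property of $f$ gives
\[
f(x+\lambda h')\geq f(x+\lambda h_i)-L\lambda\vectornorm{h'-h_i}\geq f(x+\lambda h_i)-\tfrac{\epsilon}{3}\lambda,
\]
and $\vectornorm{v}\leq L$ gives $|\pscal{v}{h_i-h'}|\leq \tfrac{\epsilon}{3}$. Combining these with the pointwise estimate at $h_i$ yields $f(x+\lambda h')-f(x)\geq \lambda\pscal{v}{h'}-\epsilon\lambda$ for every $0<\lambda<\delta$ and every unit vector $h'$. Writing an arbitrary $y$ near $x$ as $y=x+\lambda h'$ with $\lambda=\vectornorm{y-x}$ and $\vectornorm{h'}=1$, this is exactly \eqref{property:subdiff-reg}.

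The main technical point is the uniform-in-direction upgrade in the forward direction; the backward direction is essentially bookkeeping once the max formula is invoked.
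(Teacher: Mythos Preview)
Your proof is correct but follows a genuinely different route from the paper's. The paper dispatches the lemma in two lines by quoting two results from \cite{clarke1998nonsmooth}: first, that the inequality \eqref{property:subdiff-reg} characterises exactly membership of $v$ in the $D$-subdifferential $\partial_D f(x)$ (their Proposition~4.10), and second, that for a locally Lipschitz function one has $\bar\partial f(x)=\partial_D f(x)$ if and only if $f$ is regular at $x$ (their Proposition~4.8(b)). The lemma is thus reduced to the identity of two set-valued objects, with no further computation.

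Your argument is instead direct and self-contained: the backward direction combines the max formula $f^0(x,h)=\max_{v\in\bar\partial f(x)}\pscal{v}{h}$ with the trivial bound $\limsup_{\lambda\downarrow 0}(f(x+\lambda h)-f(x))/\lambda\leq f^0(x,h)$, and the forward direction upgrades the pointwise directional estimate to a uniform one via compactness of the unit sphere and the local Lipschitz bound. Both steps are fine; the sphere-covering estimate is the only place real work happens, and your $\epsilon/3$ bookkeeping checks out. What your approach buys is that it avoids introducing the $D$-subdifferential altogether and uses only the definitions already set up in the paper; what the paper's approach buys is brevity and a conceptual link to the broader hierarchy of subdifferentials in nonsmooth analysis.
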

\begin{proof}
Let $\partial_D f(x)$ be the D-subdifferential of $f$ at $x$ (see \cite[chapter~3.4, subsection D-differential]{clarke1998nonsmooth}  for definition). Then according to \cite[Proposition~4.10]{clarke1998nonsmooth} , the inequality \eqref{property:subdiff-reg} is satisfied for $v$ if and only if $v \in \partial_D f(x)$. On the other hand, \cite[Proposition~4.8, part~(b)]{clarke1998nonsmooth} implies that for a locally Lipschitz function we have $\bar{\partial} f(x) = \partial_D f(x)$ if and only if $f$ is regular. Combined, these two facts conclude the proof.
\end{proof}

\begin{definition}[Equicontinuity]
\label{def:pointwise-equicont}
A sequence of functions $\sequence{f}[n][\nset]$, from $\rset$ to $\rset^d$, is said
to be equicontinuous at $t_0$, if for all $\epsilon > 0$ there exists $\eta>0$
such that for all $|t - t_0| \leq \eta$,  $\Vnorm{f_n(t) -  f_n(t_0)} \leq  \epsilon$ for all $n \in \nset$.  A sequence of functions $\sequence{f}[n][\nset]$ from
$\rset$ to $\rset^d$ is said to be  equicontinuous, if and only if it is
equicontinuous at every point of $t_0 \in \rset$.
\end{definition}

\begin{theorem}[Arzela-Ascoli theorem]
\label{thm:Arzela-Ascoli}  \\
  Let $\sequence{f}[n][\nset]$ from $\rset$
  to $\rset^k$ be a sequence of functions. Assume that the sequence $\sequence{f}[n][\nset]$ is equicontinuous and pointwise bounded (meaning that
  $\sup_{n \in \nset} \vectornorm{f_n(x)}$ is finite for all $x \in \rset$).
  Then the sequence $\sequence{f}[n][\nset]$  is precompact in the topology of compact convergence.

\end{theorem}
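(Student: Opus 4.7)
The plan is to prove the Arzel\`a--Ascoli theorem in the standard way, reducing compact convergence on $\rset$ to uniform convergence on each compact interval via a final diagonal extraction, and on each compact interval using a diagonal extraction over a countable dense set together with the equicontinuity hypothesis.

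First I would fix a compact interval $I = [-N, N]$ and show the sequence $\sequence{f}[n][\nset]$ is precompact in the uniform topology on $I$. The initial observation is that on a compact interval, pointwise boundedness together with equicontinuity yields uniform boundedness: for every $t_0 \in I$ equicontinuity gives $\eta(t_0) > 0$ such that $\vectornorm{f_n(t) - f_n(t_0)} \leq 1$ for all $n$ and all $t$ with $|t - t_0| \leq \eta(t_0)$; the open cover $\{(t_0 - \eta(t_0), t_0 + \eta(t_0))\}_{t_0 \in I}$ has a finite subcover, and on each ball $\sup_n \vectornorm{f_n(t)}$ is controlled by $1 + \sup_n \vectornorm{f_n(t_0)}$, which is finite by hypothesis. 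This shows $\sup_n \sup_{t \in I} \vectornorm{f_n(t)} < \infty$.

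Next I would perform a diagonal extraction on a countable dense subset $D = \{q_j : j \in \nset\} \subset I$ (e.g.\ $D = \qset \cap I$). By pointwise boundedness and the Bolzano--Weierstrass theorem in $\rset^k$, I can extract a subsequence $\sequence{f}[n^{(1)}][\nset]$ of $\sequence{f}[n][\nset]$ such that $f_{n^{(1)}_k}(q_1)$ converges; inductively I extract $\sequence{f}[n^{(j)}][\nset] \subseteq \sequence{f}[n^{(j-1)}][\nset]$ such that $f_{n^{(j)}_k}(q_j)$ converges. Setting $m_k = n^{(k)}_k$, the diagonal subsequence $\sequence{f}[m][\nset]$ converges pointwise on the whole of $D$. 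To upgrade pointwise convergence on $D$ to uniform convergence on $I$, fix $\epsilon > 0$; using equicontinuity at each $t_0 \in I$, pick a neighbourhood of $t_0$ on which $\sup_n \vectornorm{f_n(\cdot) - f_n(t_0)} \leq \epsilon$, cover $I$ by finitely many such neighbourhoods, and choose one representative $q_j \in D$ inside each (possible by density of $D$). For $k, \ell$ large enough that $\vectornorm{f_{m_k}(q_j) - f_{m_\ell}(q_j)} \leq \epsilon$ for all finitely many chosen $q_j$, a triangle inequality argument yields $\sup_{t \in I} \vectornorm{f_{m_k}(t) - f_{m_\ell}(t)} \leq 3 \epsilon$, i.e.\ the subsequence is uniformly Cauchy on $I$ and hence converges uniformly to a continuous limit.

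Finally, to pass from one compact interval to compact convergence on all of $\rset$, I would apply this construction on the nested sequence of intervals $I_N = [-N, N]$, $N \in \nset^*$, and perform a second diagonal extraction: from the original sequence extract $\sequence{f}[n^{(1)}][\nset]$ converging uniformly on $I_1$, then $\sequence{f}[n^{(2)}][\nset] \subseteq \sequence{f}[n^{(1)}][\nset]$ converging uniformly on $I_2$, and so on, and set $\tilde n_k = n^{(k)}_k$. The resulting subsequence converges uniformly on every $I_N$, and since any compact $K \subset \rset$ is contained in some $I_N$, the subsequence converges uniformly on every compact subset of $\rset$, which is precisely convergence in the topology of compact convergence.

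The only step requiring real care is the passage from pointwise convergence on $D$ to uniform convergence on $I$, since this is where equicontinuity is genuinely used; the other steps are either standard compactness arguments or routine diagonal extractions. No step presents a serious obstacle, and the proof is essentially the classical one, adapted to $\rset^k$-valued functions on the non-compact domain $\rset$ via the final diagonal extraction over $\{I_N\}_{N \in \nset^*}$.
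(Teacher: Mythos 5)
Your proof is correct. Note, however, that the paper does not actually prove this statement: its ``proof'' consists of the single line ``See Folland, Theorem 4.44'', so there is no argument in the paper to compare against. What you have written is the standard self-contained proof one would find in such a reference: (1) upgrade pointwise boundedness to uniform boundedness on a compact interval via equicontinuity and a finite subcover; (2) diagonal extraction over a countable dense set to get pointwise convergence there; (3) the three-term triangle-inequality argument with a finite cover by equicontinuity neighbourhoods to get a uniformly Cauchy subsequence on the interval; (4) a second diagonal extraction over the exhaustion $[-N,N]$, $N \in \nset^*$, to pass to compact convergence on all of $\rset$. All four steps are sound. The only cosmetic slip is the constant in step (3): bounding $\Vert f_{m_k}(t) - f_{m_k}(q_j)\Vert$ requires passing through the centre $t_0$ of the covering neighbourhood containing both $t$ and $q_j$, which costs $2\epsilon$ rather than $\epsilon$ on each of the two outer terms, so the final bound is $5\epsilon$ rather than $3\epsilon$; this is pure bookkeeping and does not affect the conclusion.
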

\begin{proof}
See \cite[Theorem 4.44]{FollandAnalysis}.
\end{proof}

\begin{lemma} \label{lem:weak_conv}
Let $\sequence{f}[n][\nset]$ be a sequence of functions from $\rset$ to $\rset^d$, that are  absolutely continuous on compact intervals,
and converge pointwise to a function $f$, which is also  absolutely continuous on compact intervals.
For each $n \in \nset$, let $g_n$ be a weak derivative of $f_n$ and $g$ be a weak derivative of $f$.
Also assume that $\sequence{g}[n][\nset]$ are uniformly integrable on bounded intervals.
Then for every interval $[a,b]$, $0 \leq a < b <\infty$, the sequence $\{ g_n \indi{[a,b]}, n \in \nset\}$  converges in the weak topology of $L_1([a,b])$ to $g \indi{[a,b]}$, \ie\ we get, for all $\varphi \in L_\infty([a,b])$,
\[
\lim_{n \to \infty} \int_a^b g_n(t) \varphi(t) \rmd t = \int_a^b g(t) \varphi(t) \rmd t \eqsp.
\]
\end{lemma}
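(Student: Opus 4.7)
The plan is to combine the Dunford--Pettis theorem with the characterization of absolutely continuous functions as integrals of their (weak) derivatives, then identify limits via testing against indicators of subintervals.

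First I would set up the weak precompactness. Uniform integrability of $\sequence{g}[n][\nset]$ on $[a,b]$ together with the fact that absolutely continuous functions on compact intervals are recovered from their weak derivatives via the fundamental theorem of calculus (hence each $g_n\indi{[a,b]}$ is in $L_1([a,b])$) puts us in position to apply the Dunford--Pettis theorem: the family $\{g_n\indi{[a,b]}\,:\, n\in\nset\}$ is relatively compact in the weak topology of $L_1([a,b])$. Consequently, every subsequence admits a further subsequence that converges weakly in $L_1([a,b])$ to some limit $h\in L_1([a,b])$.

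Next I would identify any such weak limit $h$ with $g\indi{[a,b]}$. For any $t\in[a,b]$, the function $\varphi=\indi{[a,t]}$ belongs to $L_\infty([a,b])$, so weak convergence along the subsequence gives
\[
\int_a^t h(s)\,\rmd s=\lim_{k\to\infty}\int_a^t g_{n_k}(s)\,\rmd s=\lim_{k\to\infty}\bigl(f_{n_k}(t)-f_{n_k}(a)\bigr)=f(t)-f(a)=\int_a^t g(s)\,\rmd s,
\]
where the second equality uses absolute continuity of $f_{n_k}$, the third uses the pointwise convergence of $\sequence{f}[n][\nset]$, and the last uses absolute continuity of $f$. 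Since this holds for every $t\in[a,b]$, the Lebesgue differentiation theorem yields $h(s)=g(s)$ for a.e.\ $s\in[a,b]$.

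Finally, uniqueness of the weak limit promotes subsequential convergence to convergence of the full sequence: if $\{g_n\indi{[a,b]}\}$ failed to converge weakly to $g\indi{[a,b]}$, one could extract a subsequence bounded away in some weak neighborhood, but then the Dunford--Pettis argument would extract a further subsequence converging weakly to $g\indi{[a,b]}$, a contradiction. Hence $\sequence{g\indi{[a,b]}}[n][\nset]$ converges to $g\indi{[a,b]}$ in $\sigma(L_1,L_\infty)$, which is the claim. The main obstacle is essentially bookkeeping: verifying that the Dunford--Pettis hypothesis genuinely applies (uniform integrability plus $L_1$ boundedness on $[a,b]$, the latter being a consequence of uniform integrability), and keeping track of the fact that pointwise convergence of $\sequence{f}[n][\nset]$ is exactly what is needed to pin down the limit against step-function test functions $\indi{[a,t]}$, whose linear span is dense enough in $L_\infty([a,b])$ (in the weak-$*$ sense) for the identification to be complete.
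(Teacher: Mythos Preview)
Your proposal is correct and follows essentially the same route as the paper: Dunford--Pettis for weak $L_1$ precompactness, identification of any weak subsequential limit by testing against $\indi{[a,t]}$ and using $\int_a^t g_{n_k}=f_{n_k}(t)-f_{n_k}(a)\to f(t)-f(a)$, then the subsequence principle to get full-sequence convergence. The only cosmetic difference is that the paper phrases the identification step as ``$h_*$ is a weak derivative of $f$, hence equals $g$ a.e.\ by uniqueness,'' whereas you invoke the Lebesgue differentiation theorem; these are equivalent.
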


\begin{proof}
We prove the result for  $d=1$, the extension for $d>1$ is straightforward. Denote by
$h_n = \mathbbm{1}_{[a,b]} g_n$, and $h = \mathbbm{1}_{[a,b]} g$. Under the stated assumptions, for each $n \in \nset$,    $h_n \in L_1([a,b])$ and the sequence $\sequence{h}[n][\nset]$ is uniformly integrable.
Using the Dunford-Pettis theorem \cite[Corollary 4.7.19]{Bogachev2006}, we conclude that for every subsequence $\sequence{h}[n][\nset]$ there exists a further
subsequence $\subsequence{h}[n][l][\nset]$ which converges in the weak topology of $L_1[a,b])$ to $h_*$.
We know that for any $c$ such that $a \leq c \leq b$ we have:
\[
\int_a^c h_* dx = \lim_{l \rightarrow \infty} \int_a^c h_{n_l} dx = \lim_{l \rightarrow \infty} (f_{n_l}(c) - f_{n_l}(a)) = f(c) - f(a)
\]
Hence $h_*$ is a weak derivative of $f$ (restricted to $[a,b]$). Since a weak derivative is unique up to a set of measure zero, we conclude that $h_* = h$ in  $L_1([a,b])$.
Since from every subsequence of $\sequence{h}[n][\nset]$ we can choose a further subsequence converging weakly to $h$, the weak limit of $\sequence{h}[n][\nset]$ exists and is equal to $h$. This concludes the proof.
\end{proof}

\begin{definition}[Convex combination subsequence]\label{def:convex_combination}
Let $\sequence{z}[n][\nset]$ be a sequence belonging to a linear subspace over $\rset$.
Let $\{w_{n,k}, k \in \nset, n \in \nset\}$ be a sequence of weights satisfying:
\begin{enumerate}[(i)]
\item For all $k,n \in \nset$,  $0 \leq w_{n,k} \leq 1$.
\item For all $n\in \nset$ we have $\sum_{k=1}^{\infty} w_{n,k} = 1$, $w_{n,k} > 0$ only for a finite number of indices $k$, and
\[
\lim_{n \rightarrow \infty} \left( \inf \{ k: w_{n,k} > 0 \} \right) = \infty
\]
\end{enumerate}
The sequence  $\sequence{z^w}[n][\nset]$ defined for each $n \in \nset$ by
$z^w_n= \sum_{k=1}^{\infty} w_{n,k}z_k$
is said to be a convex combination of $\sequence{z}[n][\nset]$ with weights $\{w_{n,k}, k \in \nset, n \in \nset \}$.
\end{definition}
In the sequel, we will consider convex combinations of elements of $\rset^d$ and of the Banach space of integrable functions over some intervals $\ccint{a,b}$ of $\rset$, $L_1(\ccint{a,b})$.
\begin{lemma} \label{lem:L1-conv-comb}
Let $0 \leq a < b <\infty$. Let $\sequence{f}[n][\nset]$ be a sequence of functions from $[a,b]$ to $\rset^d$ which are integrable.  Suppose the sequence $\sequence{f}[n][\nset]$
converges in the weak topology of $L_1([a,b])$ to a limit $f$. Then there exists a convex combination subsequence of $\sequence{f}[n][\nset]$ that converges almost everywhere to $f$.
\end{lemma}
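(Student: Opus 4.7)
The plan is to combine Mazur's lemma with the standard fact that strong $L_1$ convergence yields an almost-everywhere convergent subsequence, taking some care to arrange the weights so that the convex-combination-subsequence condition of \Cref{def:convex_combination} is satisfied.

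First I would recall Mazur's lemma: in a Banach space, every weakly convergent sequence admits convex combinations converging strongly to the same limit. Applied directly to $\sequence{f}[n][\nset] \subset L_1([a,b])$, this yields convex combinations of $\sequence{f}[n][\nset]$ converging to $f$ in $L_1$-norm. The subtlety is that \Cref{def:convex_combination} additionally requires $\lim_{n\to\infty}\inf\{k : w_{n,k} > 0\} = \infty$, i.e., the support of the weights must drift to infinity. To enforce this, I would apply Mazur's lemma not to the whole sequence but to its tails: for each $N \in \nset^*$, the tail sequence $\set{f_k}{k \geq N}$ still converges weakly in $L_1([a,b])$ to $f$ (removing finitely many terms preserves weak convergence). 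Mazur's lemma then produces a convex combination
\[
h_N = \sum_{k \geq N} w_{N,k} f_k \eqsp,\quad \text{with } w_{N,k} \geq 0,\ \sum_{k \geq N} w_{N,k} = 1,\ w_{N,k} \ne 0 \text{ for finitely many } k,
\]
satisfying $\|h_N - f\|_{L_1([a,b])} \leq 1/N$. Setting $w_{N,k} = 0$ for $k < N$ gives weights satisfying all requirements of \Cref{def:convex_combination}, in particular $\inf\{k : w_{N,k} > 0\} \geq N \to \infty$.

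Next I would pass from strong $L_1$-convergence to almost-everywhere convergence of a subsequence. Since $\|h_N - f\|_{L_1([a,b])} \to 0$, the classical argument (e.g.\ choose indices $N_\ell$ with $\|h_{N_\ell} - f\|_{L_1} \leq 2^{-\ell}$, so that $\sum_\ell \int_a^b \vectornorm{h_{N_\ell} - f}\,\rmd t < \infty$, hence $\vectornorm{h_{N_\ell}(t) - f(t)} \to 0$ for almost every $t$ by Borel–Cantelli applied to $\{t : \vectornorm{h_{N_\ell}(t) - f(t)} > 2^{-\ell/2}\}$) produces a subsequence $\subsequence{h}[N][\ell][\nset]$ converging to $f$ almost everywhere on $[a,b]$. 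Relabeling this subsequence and its associated weights gives a convex combination subsequence of $\sequence{f}[n][\nset]$ in the sense of \Cref{def:convex_combination} that converges almost everywhere to $f$.

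The only mildly delicate point is the invocation of Mazur's lemma, which is a standard consequence of the Hahn–Banach theorem (the closed convex hull and weak closure of a set coincide in a Banach space); the index-drift condition is handled purely by the tail argument above. The extension from $d=1$ to general $d$ is automatic by applying the scalar case component-wise and then passing to a common a.e.-convergent subsequence via a finite diagonalization over the $d$ coordinates.
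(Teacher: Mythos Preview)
Your proof is correct and follows essentially the same approach as the paper: Mazur's lemma to pass from weak to strong $L_1$-convergence of convex combinations, followed by extraction of an almost-everywhere convergent subsequence. You are in fact more careful than the paper in explicitly enforcing the index-drift condition of \Cref{def:convex_combination} via the tail argument; your final remark about coordinate-wise diagonalization is unnecessary, however, since Mazur's lemma applies directly in the Banach space $L_1([a,b],\rset^d)$.
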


\begin{proof}
The space $L_1([a,b])$  is a Banach space, so from Mazur's Lemma \cite[Corollary 3.8]{Brezis2010} it follows that there exists a convex combination subsequence of
$\sequence{f}[n][\nset]$ that converges strongly in $L_1([a,b])$ to $f$, \ie\ there exits a sequence of weights $\{w_{n,k}, n,k \in \nset\}$ satisfying the conditions of \Cref{def:convex_combination} such that
\[
\lim_{n \to \infty} \int_a^b |f^w_n(t) - f(t)| \rmd t = 0 \eqsp.
\]
A strongly convergent sequence in $L_1([a,b])$ has an almost everywhere convergent subsequence.
Since a subsequence of a convex combination subsequence is a convex combination subsequence,
it follows that $\sequence{f}[n][\nset]$ has a convex combination subsequence that converges almost everywhere to $f$.
\end{proof}

\begin{lemma}\label{lem:convex_lim}
Let $G: \Xset \to \mathcal{P}(\rset^d)$ be a set-valued function, define on a open subset $\Xset \subset \rset^d$.
Assume that $G$  is upper hemicontinuous and convex-closed valued. Let $\sequence{x}[k][\nset] \subset \Xset$ be a sequence that converges to $x \in \Xset$, and $\sequence{v}[k][\nset]$
be a sequence such that $v_k \in G(x_k)$ for any $k \in \nset$. Suppose a convex combination subsequence of $\sequence{v}[k][\nset]$ converges to a limit $v$. Then $v \in G(x)$.
\end{lemma}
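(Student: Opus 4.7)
The plan is a proof by contradiction using a separating hyperplane together with upper hemicontinuity and the convexity of the approximating sets.

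Assume towards a contradiction that $v \notin G(x)$. Since $G(x)$ is a nonempty closed convex subset of $\rset^d$, the standard projection/separation theorem in finite dimensions yields $a \in \rset^d \setminus \{0\}$ and a scalar $c$ such that
\[
\sup_{y \in G(x)} \pscal{a}{y} \;<\; c \;<\; \pscal{a}{v} \eqsp.
\]
Define the open convex half-space $U = \{ y \in \rset^d : \pscal{a}{y} < c \}$. By construction $G(x) \subset U$ and $v \notin \overline{U}$.

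Next I would invoke upper hemicontinuity of $G$ at $x$: since $U$ is an open neighborhood of $G(x)$, there exists an open neighborhood $V$ of $x$ such that $\bigcup_{z \in V} G(z) \subseteq U$. Because $x_k \to x$, there is an index $N$ with $x_k \in V$ for every $k \geq N$, and hence $v_k \in G(x_k) \subseteq U$ for every such $k$.

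Now I would exploit the definition of a convex combination subsequence. Let $\sequence{v^w}[n][\nset]$ denote the convex combination subsequence converging to $v$, with weights $\{w_{n,k}\}$ as in \Cref{def:convex_combination}. Since $\lim_{n\to\infty}\bigl( \inf\{k : w_{n,k}>0\} \bigr) = \infty$, there exists $n_0$ such that for all $n \geq n_0$ the support of $w_{n,\cdot}$ is contained in $\{k \geq N\}$. For such $n$ the vector $v^w_n = \sum_{k \geq N} w_{n,k} v_k$ is a (finite) convex combination of elements of the convex set $U$, hence $v^w_n \in U$, i.e.\ $\pscal{a}{v^w_n} < c$.

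Passing to the limit $n \to \infty$ and using continuity of $y \mapsto \pscal{a}{y}$, this gives $\pscal{a}{v} \leq c$, contradicting the strict inequality $c < \pscal{a}{v}$ obtained from separation. Therefore $v \in G(x)$. The only substantive ingredient is the finite-dimensional strict separation of a closed convex set from an exterior point; everything else is a direct unpacking of upper hemicontinuity and of the support condition in the definition of a convex combination subsequence, so I do not anticipate any genuine obstacle.
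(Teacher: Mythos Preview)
Your proof is correct, but it takes a different route from the paper. The paper argues directly via the distance function: by upper hemicontinuity, for any $\epsilon>0$ one has $\dist(v_k,G(x))\le\epsilon$ for all large $k$; since $G(x)$ is convex, $\dist(\cdot,G(x))$ is convex, so Jensen's inequality gives $\dist(v^w_n,G(x))\le\epsilon$ for large $n$, and the triangle inequality then forces $\dist(v,G(x))=0$. Your argument replaces this with a contradiction via a separating half-space and exploits that half-spaces are convex. Both approaches hinge on the same two facts (upper hemicontinuity traps the tail $\{v_k\}$ near $G(x)$, and convexity is preserved under convex combinations), just packaged differently: the paper's version is slightly more quantitative and avoids invoking separation, while yours makes the role of convexity perhaps more transparent.
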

\begin{proof}
 For a closed set $A\subseteq\rset^d$ and  $z\in\rset^d$ we denote by
$\dist(z,A)=\inf_{y\in A}\vectornorm{z-y}$.
Let $\sequence{v^w}[k][\nset]$ be a convex combination subsequence of $\sequence{v}[k][\nset]$ with weights $\{ w_{n,k}, n,k \in \nset\}$ (see \Cref{def:convex_combination}).
Since $G$ is upper hemicontinuity (see \Cref{def:upper-hemicontinuity}), for any $\epsilon > 0$,  there exists an integer $k_\epsilon$, such that for $k \geq k_\epsilon$, $\dist(v_k, G(x))\leq \epsilon$.
Since $G(x)$ is convex, then function $\dist(\cdot, G(x))$ is convex, and hence from Jensen's inequality we have
$\dist(v^w_n, G(x)) \leq \epsilon$ for $n$ large enough.
Hence for any $\epsilon>0$ and $n$ large enough we have $\dist(v,G(x))\leq\vectornorm{v-v_n^w}+\dist(v^w_n,G(x))\leq2\epsilon$.
\end{proof}

\bibliographystyle{siamplain}
\bibliography{ConvStochProx}

\appendix
\section{Proof of \Cref{prop:markovian}}\label{ap:Markov}

\begin{lemma}\label{lem:lipschitz x} Under assumptions of \Cref{prop:markovian}, there exists  $C<\infty$ such that for any $k\in\nset^*$,
 \[\vectornorm {x_k -x_{k-1}} \leq C\gamma_{k} W^{1/2}(z_k)\]
\end{lemma}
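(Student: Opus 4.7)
The plan is to reduce the claim to two almost independent estimates: a deterministic one-step bound on $\vectornorm{x_k - x_{k-1}}$ obtained from the optimality conditions of the proximal/projection step, and a bound on the stochastic gradient $Y_k$ in terms of $W^{1/2}(z_k)$ provided by hypothesis (B\ref{ass: geometric ergodicity}).

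First I would revisit the computation carried out in the proof of \Cref{prop:stochProxGrad_2-PPAD} that led to \eqref{eq:bound_x_norm}. That derivation uses only the fact that $x_k$ is the proximal point of $x_{k-1}-\gamma_k Y_k$ relative to $g+\cvxind_K$, with $Y_k$ playing the role of the noisy gradient. Writing $Y_k = \nabla f(x_{k-1}) + \delta_k$ and regrouping, the same chain of inequalities gives the cleaner form
\[
\vectornorm{x_k - x_{k-1}} \leq \gamma_k \vectornorm{Y_k} + \gamma_k \vectornorm{u_k},
\]
for some $u_k \in \bar\partial g(x_k)$. For the other variant \eqref{eq:stochProxGrad_sto-1} the same bound holds, because the outer projection onto $K$ is a contraction and $x_{k-1}\in K$, hence $\vectornorm{x_k-x_{k-1}}\leq \vectornorm{y_k - x_{k-1}}$ where $y_k$ is the unprojected prox, and the prox optimality identity applies again.

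Next I would control the two terms on the right-hand side separately. Since $K$ is compact and $g$ is locally Lipschitz, \cite[Proposition~2.1.2]{clarke1990optimization} yields $\sup_{k} \vectornorm{u_k}\leq \sup_{x\in K}\loclip{g}{x}<\infty$. For the stochastic gradient I would invoke (B\ref{ass: geometric ergodicity}): $\sup_{x\in K}\normW{\Phi(x,\cdot)}{W^{1/2}}<\infty$ gives $\vectornorm{\Phi(x_{k-1},z)}\leq C\,W^{1/2}(z)$ uniformly in $x\in K$, and therefore
\[
\vectornorm{Y_k} \leq m^{-1}\sum_{j=0}^{m-1}\vectornorm{\Phi(x_{k-1},\Xb{k}{j})} \leq C\,m^{-1}\sum_{j=0}^{m-1} W^{1/2}(\Xb{k}{j}).
\]
Combining with $W\geq 1$ (so the bounded term $\vectornorm{u_k}$ is absorbed) yields the claim in the form $\vectornorm{x_k - x_{k-1}}\leq C\gamma_k W^{1/2}(z_k)$, under the convention that $z_k$ denotes the relevant Markov increment in iteration~$k$.

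The only delicate point, and what I expect to be the main obstacle, is to reconcile the single symbol $z_k$ on the right-hand side with the full minibatch $\{\Xb{k}{j}\}_{j=0}^{m-1}$ appearing in $Y_k$. For $m=1$ the argument is immediate. For $m>1$ one must control $W^{1/2}(\Xb{k}{j})$ for all $j$ in terms of $W^{1/2}$ at a single reference time: this is precisely what the drift condition $\sup_{x\in K} M_x W \leq \lambda W + b$ in (B\ref{ass: geometric ergodicity}) delivers after taking conditional expectations and iterating, giving $M_x^j W(z) \leq W(z) + b/(1-\lambda)$, which bounds each intermediate $W^{1/2}$ by a constant multiple of $W^{1/2}$ at the starting point of the minibatch (possibly up to a conditional expectation, which can be absorbed since the lemma is essentially deterministic once we fix the chosen representative $z_k$).
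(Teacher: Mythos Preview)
Your approach is essentially the paper's: both variants are handled by the one--step inequality coming from the prox optimality condition, followed by the bound $\vectornorm{\Phi(x_{k-1},z)}\le C\,W^{1/2}(z)$ from (B\ref{ass: geometric ergodicity}), with $W\ge 1$ absorbing the bounded terms.

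Two remarks. First, the minibatch discussion at the end is unnecessary here: in the setting of \Cref{prop:markovian} (and hence of the lemma), the perturbation is simply $\delta_k=\Phi(x_{k-1},z_k)-\nabla f(x_{k-1})$ with a single Markovian sample $z_k$ per iteration, so no reduction from a full batch to a single $z_k$ is needed. The drift argument you sketch would in any case give only a bound in conditional expectation, not the pointwise bound the lemma asserts. Second, for variant \eqref{eq:stochProxGrad_sto-1} your control of $\vectornorm{u_k}$ via $\sup_{x\in K}\loclip{g}{x}$ does not apply as stated, because $u_k\in\bar\partial g(y_k)$ with $y_k$ not necessarily in $K$; this is exactly why \Cref{prop:stochProxGrad_1-PPAD} imposes the additional hypothesis that $g$ is globally Lipschitz, and the paper's proof invokes that assumption at this step. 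With these two adjustments your argument matches the paper's.
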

\begin{proof}
First consider $\sequence{x}[k][\nset^*]$ generated by \eqref{eq:stochProxGrad_sto-2}. By \eqref{eq:bound_x_norm} we get
\[
 \vectornorm{x_k-x_{k-1}}\leq \gamma_k \vectornorm{\nabla f(x_{k-1})} + \gamma_k \vectornorm{\delta_k} + \gamma_k \vectornorm{ u_k}\eqsp,
\]
where $\delta_k= \Phi(x_{k-1},z_k)-\nabla f(x_{k-1})$ and $u_k\in\bar\partial g(x_k)$. Since  $\sequence{x}[k][\nset^*]\subset K$, where $K$ is compact, by (P\ref{assP:field}) we obtain
$\sup_{k\in\nset^*}\{\vectornorm{u_k}+ \vectornorm{\nabla f(x_{k-1}}\}<\infty$. In addition by (B\ref{ass:lipschitz-kernel}), there exists $\tilde C<\infty$
such that for any $z \in \Zset$, $\sup_{k\in\nset^*}\vectornorm{\Phi(x_{k-1},z)}\leq \tilde C W^{1/2}(z)$ which concludes the proof.

Now consider $\sequence{x}[k][\nset^*]$ generated by \eqref{eq:stochProxGrad_sto-1}. Denote by
\begin{equation*}
y_k=\prox_{\gamma_k g} \left( x_{k-1} - \gamma_k \nabla f(x_{k-1}) - \gamma_k \delta_k \right)\eqsp,
\end{equation*}
where $\delta_k= \Phi(x_{k-1},z_k)-\nabla f(x_{k-1})$.
Using the definitions of the proximal operator (see \eqref{eq:proximal}) and \cite[Proposition~2.3.2]{clarke1990optimization}, for each $k \in \nset^*$ there exists $u_{k} \in \bar{\partial} g(y_k)$ such that
\[
y_k= x_{k-1} - \gamma_k \nabla f(x_{k-1}) - \gamma_k \delta_k -\gamma_ku_k\;.
\]
Since $x_k$ is projection of $y_k$ on set $K$, by the triangle inequality we get
\[
 \vectornorm{x_k-x_{k-1}}\leq  \vectornorm{y_k-x_{k-1}}\leq \gamma_k \vectornorm{\nabla f(x_{k-1})} + \gamma_k \vectornorm{\delta_k} + \gamma_k \vectornorm{ u_k}\eqsp.
\]
Note that, since $g$ is Lipschitz, \cite[Proposition~2.1.2-(a)]{clarke1990optimization} shows that for all $u \in \bar\partial{g}(y)$, $\vectornorm{u} \leq \lip{g} < \infty$.
Boundedness of $\nabla f$ on the set $K$ and assumption (B\ref{ass:lipschitz-kernel}) concludes the proof.$\ $
\end{proof}
\begin{proof}[Proof of \Cref{prop:markovian}]
In this proof, $C$ is a constant whose value may change upon each appearance.
 Observe that it is enough to show $\sum_{k=1}^\infty \gamma_k\delta_k<\infty$ almost surely, where by construction
 \[
 \delta_k = \Phi({x_{k-1}},z_k)-\nabla f(x_{k-1})=\Phi({x_{k-1}},z_k)-\pi_{x_{k-1}}(\Phi({x_{k-1}},\cdot))\eqsp.
 \]
Geometric ergodicity (B\ref{ass: geometric ergodicity}) in turn implies the existence of a solution of
the Poisson equation, and also provide bounds on the growth of this
solution; see \cite[Lemma~13]{atchade2017Perturbed}. For $z \in \Zset$, we set $\Phi_x(z)= \Phi(x,z)$.
For any $x\in K$ there exists a solution $\hat \Phi_x$ to the Poisson equation
\[
\hat\Phi_x-M_x \hat\Phi_x =\Phi_x-\pi_x(\Phi_x)
\]
and there exists a constant $C<\infty$ such that for any $x \in K$ and  $z \in \Zset$
\begin{equation}\label{eq:bound poissson norm}
\normW{\hat \Phi_x(z)}{W^{1/2}}\leq C W^{1/2}(z) \quad \text{and} \quad \normW{M_x \hat{\Phi}_x(z)}{W^{1/2}}\leq C W^{1/2}(z) \eqsp.
\end{equation}
Hence, for any $k\in\nset^*$ we can decompose $\delta_k = \delta M_k+\kappa_k$ where
\begin{align*}
 \delta M_k &\eqdef \hat \Phi_{x_{k-1}}(z_k)-M_{x_{k-1}}\hat \Phi_{x_{k-1}}(z_{k-1})\\
 \kappa_k   &\eqdef M_{x_{k-1}}\hat \Phi_{x_{k-1}}(z_{k-1}) -M_{x_{k-1}}\hat \Phi_{x_{k-1}}(z_{k}).
\end{align*}
By construction, the sequence $\sequence{\delta M}[k][\nset]$ is a martingale increment sequence and Doob's inequality implies that there exists a constant $C < \infty$ such that for all $z_0 \in \Zset$,
\begin{equation}\label{eq:doob}
 \PE \left[\left(\sup_{k>n}\left|\sum_{l=n}^k\gamma_l\delta M_l\right|\right)^2 \right]
 \leq C \sum_{l=n}^\infty \gamma_l^2 \PE \left[\left\|\hat \Phi_{x_{l-1}}(z_l)-M_{x_{l-1}}\hat \Phi_{x_{l-1}}(z_{l-1})\right\|^2\right]\eqsp.
\end{equation}
By construction the sequence $\sequence{x}[k][\nset]$ remains in the compact set $K$. Using \eqref{eq:bound poissson norm}, we obtain that,
\[
\PE \left[\left(\sup_{k>n}\left|\sum_{l=n}^k\gamma_l\delta M_l\right|\right)^2 \right] \leq C \sup_{k \geq 1} \PE[W(z_k)] \sum_{l=n}^\infty \gamma_l^2 .
\]
Geometric ergodicity together with $\mathbb{E}W(z_1)<\infty$ implies that (see.
\cite[Lemma~21]{Miasojedow2013} or \cite[Lemma~14]{atchade2017Perturbed})
\[
 \sup_k \mathbb{E} W(z_k)<\infty.
\]
Therefore, since $\sum_{k=1}^\infty \gamma_k^2<\infty$  we conclude that $\sum_k\gamma_k\delta M_k$ converges almost surely.

Decompose $\sum_{l=n}^k \gamma_l \kappa_l = R_{n,k}^1+R_{n,k}^2+R_{n,k}^3$ with
\begin{align*}
 R_{n,k}^1&\eqdef\sum_{l=k-1}^{n-1} \gamma_{l+1}\left[M_{x_l}\hat \Phi_{x_l}(z_l)-M_{x_{l-1}}\hat \Phi_{x_l}(z_l)  \right]\\
 R_{n,k}^2&\eqdef\gamma_{n-1}M_{x_{n-2}}\hat \Phi_{x_{n-2}}(z_{n-1})-\gamma_{k}M_{x_{k-1}}\hat \Phi_{x_{k-1}}(z_{k})\\
 R_{n,k}^3&\eqdef \sum_{l=n-1}^{k-1}(\gamma_{l+1}-\gamma_{l})M_{x_{l-1}}\hat \Phi_{x_{l-1}}(z_l)
\end{align*}

Applying \cite[Lemma 4.2]{fort:moulines:priouret:2012} we get that
\[|R_{n,k}^1|\leq C \sum_{l=k-1}^{n-1} \gamma_{l+1}W^{1/2}(z_l)\left[ D_{W^{1/2}}(x_l,x_{l-1})+\Vert \Phi({x_l},\cdot)- \Phi({x_{l-1}},\cdot)\Vert _{W^{1/2}} \right]\]
By assumption (B\ref{ass:lipschitz-kernel}) and \Cref{lem:lipschitz x} we obtain $\sup_k [\PE|R_{n,k}^1|]\leq C  \sup_k \PE[W(z_k)] \sum_{l>n} \gamma_l^2$  and this converges to zero by $\sum_{k=1}^\infty \gamma_k^2<\infty$ .

Finally, from \eqref{eq:bound poissson norm}, (P\ref{assP:stepsize}) and $\sum_k|\gamma_k-\gamma_{k-1}|<\infty$ we deduce that $R_{n,k}^2$ and $R_{n,k}^3$ also converges to zero almost surely
and that completes the proof.
$\ $
\end{proof}

\end{document}